\theoremstyle{plain}
\newtheorem{theorem}{Theorem}[section]
\newtheorem{lemma}[theorem]{Lemma}
\newtheorem{corollary}[theorem]{Corollary}
\newtheorem{proposition}[theorem]{Proposition}
\theoremstyle{definition}
\newtheorem{remark}[theorem]{Remark}
\numberwithin{equation}{section}
\renewcommand{\d}{\mathrm{d}}
\newcommand{\Hn}{{\mathbb{H}^n}}
\newcommand{\Hnk}{{\mathbb{H}^n_\kappa}}
\title[Harnack inequality for nonlocal operators on hyperbolic spaces]{Harnack inequality for fractional Laplacian-type operators on hyperbolic spaces}
\author{Jongmyeong Kim}
\address{Department of Mathematical Sciences, Seoul National University, Seoul 08826, Republic of Korea}
\email{saw2132@snu.ac.kr}
\author{Minhyun Kim}
\address{Department of Mathematics \& Research Institute for Natural Sciences, Hanyang University, 04763 Seoul, Republic of Korea}
\email{minhyun@hanyang.ac.kr}
\author{Ki-Ahm Lee}
\address{Department of Mathematical Sciences \& Research Institute of Mathematics, Seoul National University, 08826 Seoul, Republic of Korea}
\email{kiahm@snu.ac.kr}
\subjclass[2020]{35R01, 35B65, 35J60, 47G20.}
\keywords{Krylov--Safonov theory, nonlocal operator, hyperbolic space}
\begin{document}

\begin{abstract}
We establish the Krylov–Safonov theory for a large class of nonlocal operators of order $2s \in (0,2)$ on hyperbolic spaces $\mathbb{H}^{n}_{\kappa}$ with curvature $-\kappa<0$. We prove the Alexandrov--Bakelman--Pucci (ABP) estimates, Krylov--Safonov Harnack inequality, and H\"older estimates. Notably, the Harnack inequality is new even for the fractional Laplacian. The novelty of the results lies in the robustness of the regularity estimates as $s \to 1$ and $\kappa \to 0$: they recover the classical regularity estimates for second-order operators on $\mathbb{H}^{n}_{\kappa}$ as $s \to 1$, and for fractional-order operators on Euclidean spaces as $\kappa \to 0$. Since the operators on hyperbolic spaces exhibit qualitatively different behavior compared to their Euclidean counterparts, we introduce new scale functions which take the effect of negative curvatures into account.
\end{abstract}

\maketitle

\section{Introduction} \label{sec:introduction}

In a celebrated series of papers \cite{CS09,CS11a,CS11b}, Caffarelli and Silvestre developed regularity theories such as Krylov--Safonov, Cordes--Nirenberg, and Evans--Krylov theories for fractional-order operators on Euclidean spaces. A key feature of their approach is that the constants in the regularity estimates remain uniform as the order of operator approaches 2. It means that the regularity theories for fractional-order and second-order operators are unified. Their results have also been extended to the parabolic setting in \cite{KL13,KL16,KL17}.

On the other hand, the regularity theory for the local operators on Riemannian manifolds has been extensively studied. In particular, the foundational work on Harnack inequalities by Yau \cite{Yau75} and Cheng--Yau \cite{CY75} has been extended to second-order operators in both divergence and non-divergence form. These are extensions of the De Giorgi–Nash–Moser \cite{SC92} and Krylov–Safonov Harnack inequalities \cite{Cab97,Kim04,WZ13}, respectively. See also \cite{KKL14,KL14} for the parabolic Harnack inequalities.

A natural next step is to establish regularity results for fractional-order operators on Riemannian manifolds. In this direction, the Harnack inequality for nonlocal operators on metric measure spaces with the volume doubling property---including Riemannian manifolds with nonnegative curvature---has been studied via the Dirichlet form theory \cite{CKW19}. However, this approach does not yield a unified regularity theory and is not suitable for operators in non-divergence form. For nonlocal operators in non-divergence form, the Krylov–Safonov Harnack inequality on Riemannian manifolds with nonnegative curvature was recently established by the authors in \cite{KKL22a}. This result unifies the Krylov–Safonov Harnack inequalities for both local and nonlocal operators on such manifolds, in the spirit of the works by Caffarelli and Silvestre.

In this paper, we further develop the unified regularity theory for fully nonlinear nonlocal operators of order $2s \in (0,2)$ on hyperbolic spaces $\mathbb{H}^{n}_{\kappa}$ with constant negative curvature $-\kappa < 0$. We establish the Alexandrov--Bakelman--Pucci (or ABP for short) estimates, Krylov--Safonov Harnack inequality, and H\"older estimates, which are robust as $s \to 1$ and $\kappa \to 0$ in the sense that the regularity estimates recover the classical ones for second-order operators on the hyperbolic spaces as $s \to 1$, and for fractional-order operators on Euclidean spaces as $\kappa \to 0$.

The operators considered in this work are modeled after the fractional Laplacian on hyperbolic spaces. Since hyperbolic geometry is not distinct from Euclidean geometry in dimension $n=1$, we assume $n \geq 2$ throughout the paper. We begin with the definition of the fractional Laplacian $-(-\Delta_{\Hnk})^{s}$ for $s \in (0, 1)$ and $\kappa > 0$ on the hyperbolic spaces $\Hnk$. Let $K_{\nu}$ denote the modified Bessel function of the second kind and define $\mathscr{K}_{\nu, a}(\rho) = \rho^{-\nu} K_{\nu}(a \rho)$ for notational convenience. The fractional Laplacian $-(-\Delta_{\Hnk})^{s}$ is defined by
\begin{equation} \label{eq:FL}
-(-\Delta_{\mathbb{H}^{n}_{\kappa}})^{s} u(x) = \, \mathrm{P.V.} \int_{\mathbb{H}^{n}_{\kappa}}  (u(z)-u(x)) \mathcal{K}_{n,s,\kappa}(d_{\Hnk}(x,z)) \,\mathrm{d}\mu_{\Hnk}(z),
\end{equation}
where the kernel $\mathcal{K}_{n, s, \kappa}$ is given by
\begin{equation} \label{eq:kernel-odd}
\mathcal{K}_{n,s,\kappa}(\rho) = c_{n, s} \sqrt{\kappa}^{1+2s} \left( \frac{-\sqrt{\kappa}\, \partial_\rho}{\sinh (\sqrt{\kappa} \rho)} \right)^{\frac{n-1}{2}} \mathscr{K}_{\frac{1+2s}{2}, \frac{n-1}{2}}\left( \sqrt{\kappa} \rho \right)
\end{equation}
when $n \geq 3$ is odd and
\begin{equation} \label{eq:kernel-even}
\mathcal{K}_{n,s,\kappa}(\rho) = c_{n, s} \int_{\rho}^{\infty} \frac{\sqrt{\kappa}^{1+2s} \sinh (\sqrt{\kappa}r)}{\sqrt{\pi}\sqrt{\cosh (\sqrt{\kappa} r) - \cosh (\sqrt{\kappa} \rho)}} \left( \frac{- \sqrt{\kappa}\,\partial_r}{\sinh (\sqrt{\kappa}r)} \right)^{\frac{n}{2}} \mathscr{K}_{\frac{1+2s}{2}, \frac{n-1}{2}}\left( \sqrt{\kappa} r \right) \, \mathrm{d}r
\end{equation}
when $n \geq 2$ is even, and
\begin{equation*}
c_{n, s} = \frac{(n-1)^{\frac{1+2s}{2}}}{2^{\frac{n-1}{2}}\pi^{\frac{n}{2}}} \frac{1}{|\Gamma(-s)|}.
\end{equation*}
See \Cref{sec:preliminaries} for details.

\begin{remark}
\begin{enumerate}[(i)]
\item
Note that the normalizing constant $c_{n, s}$ has the same asymptotic behavior with $1-s$ as $s \to 1$ up to a dimensional constant. This is a crucial fact for the robust regularity estimates as in \cite{CS09,CS11a,CS11b}.
\item
It is natural to expect that $\mathcal{K}_{n,s,\kappa}$ converges to the kernel of the fractional Laplacian on Euclidean space as curvature $-\kappa$ approaches zero. Indeed, we have from \eqref{eq:scaling-kernel} and \cite[Proposition~1.2]{KKL22b} that
\begin{equation*}
\mathcal{K}_{n,s,\kappa}(\rho) \to \rho^{-n-2s},
\end{equation*}
as $\kappa \to 0$ up to some constant depending on $n$ and $s$. 
\item
The kernel $\mathcal{K}_{n,s,\kappa}(\rho)$ decays exponentially as $\rho \to \infty$, in contrast to the polynomial decay of the kernel in Euclidean spaces. The difference comes from the exponential growth of the volume of balls in hyperbolic spaces. This is why the regularity theories on manifolds with negative curvature must be treated separately from those on manifolds with nonnegative curvatures.
\end{enumerate}
\end{remark}

Modeled on the fractional Laplacian \eqref{eq:FL}, fully nonlinear operators of the fractional Laplacian-type can be defined in the standard way. For a class $\mathcal{L}_{0}$ of linear operators of the form
\begin{equation*}
Lu(x) = \mathrm{P.V.} \int_{\Hnk} (u(z)-u(x)) \mathcal{K}(x, z) \, \d\mu_{\Hnk}(z), \quad x \in \Hnk,
\end{equation*}
with measurable kernels $\mathcal{K}$ satisfying
\begin{equation*}
\lambda \mathcal{K}_{n,s,\kappa}(d_{\Hnk}(x, z)) \leq \mathcal{K}(x, z) \leq \Lambda \mathcal{K}_{n,s,\kappa}(d_{\Hnk}(x, z)), \quad 0 < \lambda \leq \Lambda,
\end{equation*}
the {\it maximal} and {\it minimal operators} are defined by
\begin{equation*}
\mathcal{M}^{+} u(x) := \mathcal{M}^{+}_{\mathcal{L}_{0}} u(x) := \sup_{L \in \mathcal{L}_{0}} Lu(x) \quad \text{and} \quad \mathcal{M}^{-} u(x) := \mathcal{M}^{-}_{\mathcal{L}_{0}} u(x) := \inf_{L \in \mathcal{L}_{0}} Lu(x),
\end{equation*}
respectively. It is easy to see that these extremal operators are well defined at $x \in \Hnk$ for any bounded function $u$ that is $C^2$ near $x$, see \eqref{eq:well-definedness}. An operator $\mathcal{I}$ is said to be {\it elliptic with respect to $\mathcal{L}_{0}$} if
\begin{equation*}
\mathcal{M}^{-}_{\mathcal{L}_{0}} (u-v)(x) \leq \mathcal{I}(u, x) - \mathcal{I}(v, x) \leq \mathcal{M}^{+}_{\mathcal{L}_{0}} (u-v) (x)
\end{equation*}
for every point $x \in \Hnk$ and for all bounded functions $u$ and $v$ which are $C^2$ near $x$.

The first step towards the Krylov--Safonov Harnack inequality and H\"older estimate is the ABP-type estimate, which provides an estimate on the distribution function of supersolutions to fully nonlinear nonlocal operators. To state this result, we define functions
\begin{equation*}
\mathcal{S}_{\kappa}(t) = \frac{\sinh (\sqrt{\kappa}t)}{\sqrt{\kappa}t}, \quad \mathcal{H}_{\kappa}(t) = \sqrt{\kappa}t \coth (\sqrt{\kappa}t), \quad\text{and}\quad \mathcal{T}_{\kappa}(t) = \frac{\sqrt{\kappa}t}{\tanh^{-1}(\frac{1}{2}\tanh(\sqrt{\kappa}t))}.
\end{equation*}

\begin{theorem} [ABP-type estimate] \label{thm:ABP}
Let $s_{0} \in (0,1)$ and assume $s \in [s_{0},1)$. Let $u \in C^{2}(B_{5R}) \cap L^{\infty}(\Hnk)$ be a function on $\Hnk$ satisfying $\mathcal{M}^{-} u \leq f$ in $B_{5R}$, $u \geq 0$ in $\Hnk \setminus B_{5R}$, and $\inf_{B_{2R}} u \leq 1$. Let $\mathscr{C}$ be a contact set defined by \eqref{eq:contact_set}, then there is a finite collection $\lbrace Q^j_\alpha \rbrace$ of dyadic cubes, with $\mathrm{diam}(Q^j_\alpha) \leq r_0$, such that $Q^j_\alpha \cap \mathscr{C} \neq \emptyset$, $\mathscr{C} \subset \cup_j \overline{Q}^j_\alpha$, and
\begin{equation} \label{eq:Riemann_sum}
|B_R| \leq \sum_j c F^{n} |Q^j_\alpha|,
\end{equation}
where $r_0$, $c$, and $F$ are given by \eqref{eq:r_0},
\begin{equation*}
F = \mathcal{S}_{\kappa}(7R) \left( \Lambda \mathcal{H}_{\kappa}(7R) + \frac{R^2}{\mathcal{I}_{0,\kappa}(R)} \max_{\overline{Q}^j_\alpha} f \right)_{+},
\end{equation*}
and
\begin{equation*}
c = C \cosh^{n-1} \left( C \sqrt{\kappa}r_{0} \mathcal{T}_{\kappa}^2(r_0) F \right) \left( C\mathcal{T}_{\kappa}^2(r_0) F \right)^{(n-1)\log \cosh(C \sqrt{\kappa}r_{0} \mathcal{T}_{\kappa}^{2}(r_0) F)} \mathcal{T}_{\kappa}^{2n}(r_0),
\end{equation*}
respectively. See \eqref{eq:scale_functions} for the definition of $\mathcal{I}_{0,\kappa}(R)$. The universal constant $C>0$ depends only on $n$, $\lambda$, $\Lambda$, and $s_{0}$.
\end{theorem}

\begin{remark}
\begin{enumerate}[(i)]
\item
The Riemann sum in \eqref{eq:Riemann_sum} converges as $s \to 1$ to the integral
\begin{equation*}
C \int_{\mathscr{C}} \mathcal{S}_{\kappa}^{n}(7R) \left( \Lambda \mathcal{H}_{\kappa}(7R) + R^2 f(x) \right)_{+}^{n} \, \d \mu_{\Hnk}(x),
\end{equation*}
which is of the form appearing in \cite[Theorem 1.2]{WZ13}. This implies that \Cref{thm:ABP} recovers the ABP estimate for second-order operators on hyperbolic spaces as a limit $s \to 1$. Indeed, it will be proved in \Cref{prop:limit_zero} and \Cref{lem:r_0} that $\mathcal{I}_{0, \kappa}(R) \to C(n)$ and $r_{0} \to 0$ as $s \to 1$. Moreover, since $\lim_{t \to 0} \mathcal{T}_{\kappa}(t) = 2$, the dependence of $c$ on $\kappa$ and $R$ disappears in the limit $s \to 1$.
\item
\Cref{thm:ABP} provides a new result even for second-order operators because it covers fully nonlinear operators.
\item
\Cref{thm:ABP} also recovers the ABP estimate on the Euclidean spaces \cite{CS09} as $\kappa \to 0$.
\item
The function $\frac{R^2}{\mathcal{I}_{0,\kappa}(R)}$ plays the role of $R^{2s}$ in the setting of Euclidean spaces \cite{CS09} and manifolds with nonnegative curvature \cite{KKL22a}. However, it exhibits qualitatively different behavior due to the exponential decay of the kernel $\mathcal{K}_{n, s, \kappa}(R)$ for the fractional Laplacian $-(-\Delta_{\Hnk})^{s}$ as $\rho \to \infty$, in contrast to the polynomial decay in the nonnegative curvature case.
\end{enumerate}
\end{remark}

We next establish the Krylov--Safonov Harnack inequality and H\"older estimates for solutions of fully nonlinear nonlocal equations on hyperbolic spaces $\Hnk$. The Harnack inequality is new even for the fractional Laplacian on hyperbolic spaces.

\begin{theorem} [Harnack inequality] \label{thm:Harnack}
Let $s_{0} \in (0,1)$ and assume $s \in [s_{0},1)$. If a nonnegative function $u \in C^2(B_{7R}) \cap L^\infty(\mathbb{H}^n_{\kappa})$ satisfies
\begin{equation} \label{eq:main_eq}
\mathcal{M}^{-} u \leq C_{0} \quad \text{and} \quad \mathcal{M}^{+} u \geq -C_{0} \quad \text{in } B_{7R},
\end{equation}
then
\begin{equation*}
\sup_{B_{\delta_{1} R}} u \leq C \left( \inf_{B_{\delta_{1} R}} u + C_{0} \frac{(7R)^{2}}{\mathcal{I}_{0,\kappa}(7R)} \right)
\end{equation*}
for some universal constants $\delta_{1} \in (0,1)$ and $C > 0$ depending only on $n$, $\lambda$, $\Lambda$, $\sqrt{\kappa}R$, and $s_{0}$.
\end{theorem}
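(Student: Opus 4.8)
The plan is to run the Krylov--Safonov scheme in the nonlocal, non-self-similar setting, with \Cref{thm:ABP} playing the role of the analytic engine. The one genuinely new feature, compared with the nonnegatively curved case of \cite{KKL21}, is that the kernels $\mathcal{K}_\gamma$ decay exponentially, so every constant has to be tracked through the scale functions $\mathcal{S}$, $\mathcal{H}$, $\mathcal{T}$, $\mathcal{I}_0$ rather than through powers of $R$. The first ingredient is a \emph{special function}: a bounded, radially symmetric barrier $\Phi$ on $\mathbb{H}^3$, obtained by testing a suitable radial profile against the kernel of \Cref{thm:kernel}, such that $\Phi \ge 0$ outside $B_{5R}$, $\Phi \le -2$ on $B_{2R}$, and $\mathcal{M}^+\Phi$ is bounded on $\mathbb{H}^3$ and small outside a fixed small ball $B_{c_0 R}$. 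The scale function $\mathcal{I}_0$ enters this last estimate precisely because $\tfrac{R^2}{\mathcal{I}_0(R)}$ replaces $\tfrac{1}{2\gamma}R^{2\gamma}$; using \Cref{lem:r_0}, \Cref{lem:limit} and $\lim_{t\to0}\mathcal{T}(t)=2$ one checks that the constants appearing in $\Phi$ stay bounded as $\gamma\to1$.

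Combining $\Phi$ with \Cref{thm:ABP} yields the basic \emph{measure estimate}: there are $M_1>1$ and $\mu\in(0,1)$, depending only on $n$, $\lambda$, $\Lambda$, $R$, $\gamma_0$, so that if $u\ge0$ on $\mathbb{H}^3$, $\inf_{B_{2R}}u\le1$, and $\mathcal{M}^-u\le C_0'$ in $B_{5R}$ with $C_0'$ bounded in terms of $R$ and $\gamma_0$, then $|\{u\le M_1\}\cap B_R|\ge\mu|B_R|$. One applies \Cref{thm:ABP} to $w=u+\Phi$, which is nonnegative outside $B_{5R}$, satisfies $\inf_{B_{2R}}w<0$ and $\mathcal{M}^-w\le C_0'+\mathcal{M}^+\Phi$; its contact set lies in $\{u\le M_1\}$, the boundedness of $F$ keeps the constant $c$ in \eqref{eq:c} bounded, and \eqref{eq:Riemann_sum} then gives the claimed lower bound (the thin $r_0$-neighbourhood produced by the covering is negligible since $r_0\to0$). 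Iterating this by the Calder\'on--Zygmund ``stacked cubes'' argument on the dyadic cubes of \Cref{thm:ABP} upgrades it to power decay of the distribution function, $|\{u>t\}\cap B_R|\le C\,t^{-\varepsilon}|B_R|$ for all $t\ge1$, with $\varepsilon>0$ and $C$ depending only on $n$, $\lambda$, $\Lambda$, $R$, $\gamma_0$. Because $\mathbb{H}^3$ is not self-similar, the iteration must be carried out at the fixed scale $R$, but the cubes have diameter at most $r_0\to0$ and the bounded geometry of $\mathbb{H}^3$ at scale $R$ supplies the doubling and covering properties needed.

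The Harnack inequality then follows by applying the power-decay estimate on both sides of \eqref{eq:main_eq}. By positive homogeneity of $\mathcal{M}^\pm$ we may divide $u$ by $\inf_{B_{\delta_1R}}u+C_0(7R)^2/\mathcal{I}_0(7R)$ and so assume $\inf_{B_{\delta_1R}}u\le1$ and $C_0\le\mathcal{I}_0(7R)/(7R)^2$. The supersolution bound $\mathcal{M}^-u\le C_0$ feeds into the power-decay estimate to give a weak Harnack bound $\|u\|_{L^\varepsilon(B_{2\delta_1R})}\le C$; the subsolution bound $\mathcal{M}^+u\ge-C_0$ combined with a barrier of the opposite sign yields a local maximum principle $\sup_{B_{\delta_1R}}u\le C(\|u\|_{L^\varepsilon(B_{2\delta_1R})}+1)$; chaining the two gives $\sup_{B_{\delta_1R}}u\le C$, which is the assertion after undoing the normalization. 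Equivalently one may argue by contradiction through the growth iteration of \cite{CS09,CS11a,KKL21}: if $\sup_{B_{\delta_1R}}u$ were too large one produces points along which $u$ grows geometrically with summable spatial displacements, contradicting the power-decay estimate (or $u\in L^\infty(\mathbb{H}^3)$); the smallness of $\delta_1$ is dictated by the room these coverings require inside $B_{7R}$.

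The main difficulty is not the combinatorics of the iteration, which parallels \cite{CS09,KKL21}, but the bookkeeping forced by the absence of scaling: the barrier construction, the measure estimate, the stacked-cubes lemma and the final iteration all live at the fixed scale $R$, the exponential volume growth of $\mathbb{H}^3$ must be absorbed into $\mathcal{S}$, $\mathcal{H}$, $\mathcal{T}$, and one has to verify, using \Cref{thm:kernel} and its asymptotics together with \Cref{lem:r_0} and \Cref{lem:limit}, that all resulting constants stay uniform as $\gamma\to1$. This last point is exactly where the explicit value of the normalizing constant $C(3,\gamma)$, and in particular its vanishing like $1-\gamma$, is indispensable.
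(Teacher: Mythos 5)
Your plan matches the paper's architecture exactly: barrier function (the $v_\delta$ of \Cref{cor:barrier}), measure estimate by applying \Cref{thm:ABP} to $u+\Phi$ (this is \Cref{lem:base}), Calder\'on--Zygmund iteration on the dyadic cubes of \Cref{thm:dyadic_cubes} to get power decay (\Cref{lem:L-eps}), and a final growth iteration \`a la Caffarelli--Silvestre on a profile $h_t$ to bound $\sup u$. The observations about the absence of scaling, the role of $\mathcal{I}_0$, $\mathcal{T}$, $\mathcal{S}$, $\mathcal{H}$, and the uniformity as $\gamma\to1$ via $C(3,\gamma)\sim(1-\gamma)$ are also all the paper's.

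The one place where the sketch is too thin to be carried out as stated is the final step. You invoke either a ``local maximum principle from a barrier of the opposite sign'' or a generic ``growth iteration'', but in the nonlocal, non-scale-invariant setting both routes hinge on a nontrivial control of the \emph{nonlocal tail} of the unknown: after truncating $v=(1-\theta/2)^{-\log_2\mathcal{D}/\varepsilon}u(x_0)-u$ to $w=v^+$, the term $\mathcal{M}^+v^-$ contains $\int_{\mathbb{H}^3\setminus B(x_0,\theta r)}(u(z)-c)^+\,\mathcal{K}_\gamma(d_{\mathbb{H}^3}(z,x))\,\d\mu(z)$, and without a priori control of this integral the measure estimate cannot be applied to $w$ at the small scale $\theta r$. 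The paper supplies this control by an auxiliary concave profile $g_\beta(x)=\beta\bigl(1-d_{\mathbb{H}^3}^2(x,0)/R^2\bigr)^+$ touched from below at some $x_1\in B_R$, turning $\mathcal{M}^-u(x_1)\le C_0$ into a bound $\int(u(z)-c)^+\,\mathcal{K}_\gamma(d_{\mathbb{H}^3}(z,x_1))\,\d\mu(z)\le C\,\mathcal{H}(7R)\,\mathcal{I}_0(7R)/(7R)^2$, and then transferring this from the center $x_1$ to the center $x$ via a uniform bound on the ratio $\mathcal{K}_\gamma(d_{\mathbb{H}^3}(z,x))/\mathcal{K}_\gamma(d_{\mathbb{H}^3}(z,x_1))$ obtained from Laforgia's inequalities for ratios of modified Bessel $K$-functions. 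This kernel-comparability step is precisely where the lack of a dilation symmetry bites hardest, and it is the ingredient your sketch does not mention; everything else you wrote lines up with the paper.

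Also, as a smaller remark, the alternative route you describe first (weak Harnack plus a stand-alone local maximum principle for subsolutions) is not what the paper does, and in the nonlocal setting that route requires exactly the same tail-control machinery, so presenting the barrier-of-opposite-sign step as straightforward undersells the difficulty; the growth-iteration route you mention second is the one the paper actually follows.
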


Let us denote by $\|\cdot\|'$ the non-dimensional norm in the following theorem.

\begin{theorem} [H\"{o}lder estimates] \label{thm:Holder}
Let $s_{0} \in (0,1)$ and assume $s \in [s_{0},1)$. If $u \in C^{2}(B_{7R}) \cap L^{\infty}(\Hnk)$ satisfies \eqref{eq:main_eq}, then
\begin{equation*}
\|u\|_{C^{\alpha}(\overline{B_{R}})}' \leq C \left( \|u\|_{L^{\infty}(\Hnk)} + C_{0} \frac{(7R)^{2}}{\mathcal{I}_{0,\kappa}(7R)} \right)
\end{equation*}
for some universal constants $\alpha \in (0,1)$ and $C > 0$ depending only on $n$, $\lambda$, $\Lambda$, $\sqrt{\kappa}R$, and $s_{0}$.
\end{theorem}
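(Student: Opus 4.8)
The plan is to deduce the H\"older estimate from the Harnack inequality \Cref{thm:Harnack} by the classical oscillation-decay iteration of Krylov--Safonov theory, adapted to the non-scaling geometry of $\mathbb{H}^3$. Since $\mathbb{H}^3$ is homogeneous and \eqref{eq:main_eq} is assumed in the full ball $B_{7R}$, it suffices --- after translating the base point to an arbitrary $y\in\overline{B_R}$ and using the factor $7$ to leave room --- to establish the pointwise modulus-of-continuity estimate
\[
\operatorname{osc}_{B_\rho(y)} u \le C\Big(\tfrac{\rho}{R}\Big)^\alpha \Theta, \qquad \Theta:=\Ve u\Ve_{L^\infty(\mathbb{H}^3)}+C_0\tfrac{(7R)^2}{\mathcal{I}_0(7R)},
\]
for all $y\in\overline{B_R}$ and $0<\rho\le c_0 R$, with $\alpha\in(0,1)$ and $C$ depending only on $n,\lambda,\Lambda,R,\gamma_0$; combined with the trivial $L^\infty$ bound this gives the asserted $C^\alpha(\overline{B_R})$ estimate in the non-dimensional norm $\Ve\cdot\Ve'$.

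Fix $y$, let $\delta_1\in(0,1)$ be the radius from \Cref{thm:Harnack}, pick $\rho_0\asymp R$ with $B_{7\rho_0}(y)\subset B_{7R}$, and set $\rho_k=\delta_1^{\,k}\rho_0$. The goal is to construct, by induction on $k$, monotone sequences $m_k\le M_k$ with $m_k\le u\le M_k$ in $B_{\rho_k}(y)$ and $M_k-m_k\le\mu^k\Theta$ for some universal $\mu\in(0,1)$; writing $k=\log(\rho/\rho_0)/\log\delta_1$ then yields the displayed estimate with $\alpha=\log\mu/\log\delta_1$. For the inductive step, the two nonnegative functions $u-m_k$ and $M_k-u$ on $B_{\rho_k}(y)$ sum to $M_k-m_k$, so evaluating at the centre, one of them --- say $v:=u-m_k$ --- satisfies $v(y)\ge\tfrac12(M_k-m_k)$. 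Since \Cref{thm:Harnack} requires global nonnegativity, I would replace $v$ by $\tilde v:=v+g_k$, where $g_k\ge0$ is a smooth corrector supported in $\mathbb{H}^3\setminus B_{\rho_k}(y)$ chosen so that $\tilde v\ge0$ everywhere and $\tilde v=v$ in $B_{\rho_k}(y)$; the inductive oscillation bounds force $g_k\lesssim\mu^j\Theta$ on the annulus $B_{\rho_j}(y)\setminus B_{\rho_{j+1}}(y)$ for $j<k$ and $g_k\lesssim\Ve u\Ve_{L^\infty(\mathbb{H}^3)}$ on $\mathbb{H}^3\setminus B_{\rho_0}(y)$. Since $\mathcal{M}^-\tilde v\le\mathcal{M}^- u+\mathcal{M}^+ g_k$ and $\mathcal{M}^+\tilde v\ge\mathcal{M}^+ u+\mathcal{M}^- g_k$, the function $\tilde v$ solves in $B_{7\rho_k}(y)$ a two-sided inequality with right-hand side $C_0+\Ve\mathcal{M}^+ g_k\Ve_{L^\infty(B_{7\rho_k}(y))}$. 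Estimating $\mathcal{M}^\pm g_k$ by splitting the kernel integral over the above annuli and over $\mathbb{H}^3\setminus B_{\rho_0}(y)$, using the standard estimate of $\mathcal{M}^\pm$ on $C^2$ functions, the \emph{exponential} decay of $\mathcal{K}_\gamma$ at infinity --- so that $\int_{\mathbb{H}^3\setminus B_{\rho_0}(y)}\mathcal{K}_\gamma\,\d\mu_{\Hn}<\infty$ is harmless and no growth hypothesis on $u$ is needed --- and the fact from \Cref{thm:kernel} that the normalizing constant $C(3,\gamma)$ is comparable to $1-\gamma$, which keeps quantities such as $\int_{B_r}d^2\mathcal{K}_\gamma(d)\,\d\mu_{\Hn}\asymp r^{2-2\gamma}$ uniformly controlled as $\gamma\to1$, one obtains $\tfrac{\rho_k^2}{\mathcal{I}_0(\rho_k)}\big(C_0+\Ve\mathcal{M}^+ g_k\Ve_{L^\infty(B_{7\rho_k}(y))}\big)\lesssim\mu^k\Theta$. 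Applying \Cref{thm:Harnack} on $B_{7\rho_k}(y)$ to $\tilde v$ and using $\sup_{B_{\rho_{k+1}}(y)}\tilde v\ge\tilde v(y)=v(y)\ge\tfrac12(M_k-m_k)$ then gives $\inf_{B_{\rho_{k+1}}(y)}u\ge m_k+c(M_k-m_k)-C\mu^k\Theta$, hence $\operatorname{osc}_{B_{\rho_{k+1}}(y)}u\le(1-c)(M_k-m_k)+C\mu^k\Theta$; choosing $\mu\in(\max\{1-c,\ \delta_1^{\gamma_0}\},\,1)$ then absorbs the error term (the strict contraction $1-c<\mu$ leaves exactly enough room), and the induction closes with $\alpha=\log\mu/\log\delta_1\le\gamma_0$.

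The main obstacle is exactly this bookkeeping. First, $\alpha$ must be taken small --- depending only on $n,\lambda,\Lambda,\gamma_0$ --- so that both $\mathcal{M}^\pm g_k$ and the $C_0$-inhomogeneity are each dominated by a fixed fraction of $\mu^k(M_k-m_k)$; here one uses $\alpha\le\gamma_0<2\gamma$ together with $\tfrac{\rho^2}{\mathcal{I}_0(\rho)}\asymp\rho^{2\gamma}$ for small $\rho$ and $\mathcal{I}_0(\rho)\to6$ as $\gamma\to1$. Second, one needs the constants $c$ (equivalently $C$) and $\delta_1$ in \Cref{thm:Harnack} to be uniform over all radii $\rho\in(0,R]$ and all $\gamma\in[\gamma_0,1)$ --- together with $r_0\to0$ --- which is precisely what the uniform-in-$\gamma\to1$ formulation of the Harnack inequality provides. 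Compared with the Euclidean case or manifolds with nonnegative curvature, the non-scaling structure forces the whole argument to run at a fixed outer scale and to lean on these uniform-in-$\rho$ bounds in place of a rescaling, while the exponential decay of $\mathcal{K}_\gamma$ genuinely simplifies the far-field estimates. Once $\operatorname{osc}_{B_\rho(y)}u\lesssim(\rho/R)^\alpha\Theta$ holds for every $y\in\overline{B_R}$, a standard argument converts this family of pointwise moduli of continuity into the claimed H\"older bound, with the $R$-dependence absorbed into the universal constant and the non-dimensional norm.
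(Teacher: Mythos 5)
Your overall strategy---oscillation decay by iterating a Harnack-type estimate at geometrically shrinking scales, with a far-field corrector to restore nonnegativity and the monotonicity of $\mathcal{I}_0(\rho)/\rho^{2-\gamma}$ from \Cref{lem:decreasing} to compare scales---is sound and reaches the same conclusion, but it is not the route the paper takes. The paper's Lemma 9.1 proves a pointwise $C^\alpha$ estimate at the center by invoking the \emph{$L^\varepsilon$-estimate} (\Cref{lem:L-eps}), not the full Harnack inequality: it defines $v=(u-m_k)/\tfrac12(M_k-m_k)$, truncates to $w=v^+$, uses a \emph{measure-theoretic} dichotomy (on a dyadic cube $Q_1$, $u>\tfrac12(M_k+m_k)$ or $u\le\tfrac12(M_k+m_k)$ on at least half of $Q_1$ in measure), bounds $\mathcal M^+v^-$ via the a priori decay $v(z)\ge -2\big((d_{\Hn}(z,0)/R_k)^\alpha-1\big)$ outside $B_k$, and then lets $\alpha\to0$ with a dominated-convergence argument to make the far-field contribution uniformly small over $\gamma\in[\gamma_0,1)$. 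You instead evaluate at the center (pointwise dichotomy) and feed the perturbed function $\tilde v=v+g_k$ into the full Harnack inequality \Cref{thm:Harnack}. Both routes are legitimate; the paper's is slightly leaner because it bypasses the full Harnack (which in turn is derived from the $L^\varepsilon$-estimate), needs only a one-sided inequality per dichotomy branch, and handles the small-$\alpha$ limit cleanly with DCT rather than summing over annuli.

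Two points in your sketch deserve sharpening. First, when you write that the Harnack inequality gives $\operatorname{osc}_{B_{\rho_{k+1}}}u\le(1-c)(M_k-m_k)+C\mu^k\Theta$ and that $\mu>1-c$ ``leaves exactly enough room,'' the induction only closes if the error is a \emph{small} fraction $\varepsilon\mu^k\Theta$ of the previous oscillation (with $\varepsilon<c$), not merely $C\mu^k\Theta$; this is exactly what the paper achieves by sending $\alpha\to0$ and showing the right-hand side of their \eqref{eq:DCT} tends to $0$, with $\min_{\gamma\in[\gamma_0,1]}\alpha_\gamma>0$ chosen continuously in $\gamma$---your appeal to $\alpha$ small needs a similar quantitative statement, and your annulus-by-annulus sum $\sum_{j<k}\mu^j(\text{kernel mass in }B_{\rho_j}\setminus B_{\rho_{j+1}})$ must be shown to go to $0$ relative to $\mathcal{I}_0(\rho_k)/\rho_k^2$ as $\alpha\to0$, uniformly in $\gamma$. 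Second, you explicitly lean on the Harnack constants being uniform over radii $\rho\in(0,R]$ but do not justify this; in fact it does hold because every $R$-dependent quantity in \Cref{lem:base} and \Cref{lem:L-eps} (e.g. $\mathcal{D}=2^n\cosh^{n-1}(2R)$, $\mathcal{H}(7R)$, $\mathcal{S}(7R)$) is monotone in $R$, so shrinking the scale only improves the constants---but this is a needed verification, not a given, in the non-scaling hyperbolic setting.
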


Since $\mathcal{I}_{0, \kappa}(R) \to C(n)$ as $s \to 1$ and the universal constants in \Cref{thm:Harnack} and \Cref{thm:Holder} do not depend on $s$, the regularity estimates in \Cref{thm:Harnack} and \Cref{thm:Holder} recover the classical estimates for second-order operators on hyperbolic space as limits.

The main difficulties in establishing regularity results, \Cref{thm:ABP}, \Cref{thm:Harnack}, and \Cref{thm:Holder}, arise from the influence of negative curvature. While the volume of balls in hyperbolic spaces behaves similarly to that in Euclidean spaces at small scales, it grows exponentially with increasing radius. Due to this non-homogeneity of the volume, the scaling property does not hold, making the standard arguments for regularity results break. Similar challenges are known to arise in the context of heat kernel estimates \cite{CKKW21,Gri03,GHH18}. Our results may thus offer insight into the heat kernel estimation on non-homogenous spaces. To address this difficulty, we introduce new scale functions that reflect the non-homogeneous geometry and provide their monotonicity properties in \Cref{sec:scale_functions}.

Another difficulty caused by the non-homogeneity of the volumes arises in the dyadic ring argument in the ABP estimates. In these estimates, one seeks a dyadic ring around a contact point where a supersolution remains quadratically close to a tangent paraboloid in a large portion of the ring. However, the standard Euclidean dyadic rings of the form $B_{2^{-k}r} \setminus B_{2^{-(k+1)}r}$ are no longer suitable in the framework of hyperbolic spaces. This motivates the introfuction of a {\it hyperbolic dyadic ring} whose radii are determined by the volume of balls (see \Cref{sec:ABP}). The hyperbolic dyadic ring turns out to be the natural ``dyadic" ring in hyperbolic geometry.

After the ABP estimates, we prove the regularity estimates by constructing a barrier function. It is standard to use the distance function for the construction, but the computation is significantly different from the standard one because of the hyperbolic structure. We observe in \Cref{sec:barrier} how the negative curvature of hyperbolic spaces affects the computations.

Let us also highlight some applications. As mentioned in \cite{CS09}, the fully nonlinear operators considered in this paper are naturally related to the stochastic optimal control theory \cite{Son86}. On the other hand, hyperbolic geometry plays a central role in the uniformization theorem and in the formulation of special relativity. Moreover, hyperbolic spaces have found applications in computer science, such as in hierarchical data representation and network analysis \cite{PVM21}.

This paper is organized as follows. In \Cref{sec:preliminaries}, we recall several models for hyperbolic spaces and define the fractional Laplacian $-(-\Delta_{\Hnk})^{s}$ on these spaces. In \Cref{sec:scale_functions}, new scale functions are introduced and some monotonicity properties are studied. The regularity theory begins with ABP-type estimates in \Cref{sec:ABP}. In this section \Cref{thm:ABP} is proved. The next step is the construction of a barrier function, and this is presented in \Cref{sec:barrier}. This barrier function, together with the ABP estimates, is used to obtain the so-called $L^\varepsilon$-estimates in \Cref{sec:L_eps_estimate}. \Cref{thm:Harnack} and \Cref{thm:Holder} are proved in \Cref{sec:Harnack} and \Cref{sec:Holder}, respectively. In \Cref{sec:special_functions}, some properties of special functions are collected.

\section{Preliminaries} \label{sec:preliminaries}

In this section, we recall several models for the $n$-dimensional hyperbolic spaces, revisit the definition of the fractional Laplacian on these spaces, and collect some well-known results on hyperbolic spaces.

\subsection{Hyperbolic spaces}

Let us recall the hyperboloid model and the Poincar\'e ball model (see, e.g., \cite{Geo05,Rat06,Thu97}).

We first recall the hyperboloid model
\begin{equation*}
\mathbb{H}^n_\kappa = \lbrace (x_0, \cdots, x_n) \in \mathbb{R}^{n+1}: x_0^2-x_1^2-\cdots-x_n^2 = \kappa^{-1}, x_0 > 0 \rbrace
\end{equation*}
with the metric induced by the Lorentzian metric $-\d x_0^2+\d x_1^2+\cdots+ \d x_n^2$ in $\mathbb{R}^{n+1}$. The space $\mathbb{H}^n_\kappa$ has a constant curvature $-\kappa < 0$. The internal product induced by the Lorentzian metric is denoted by $[x, x'] = x_0 x_0' - x_1 x_1' - \cdots - x_n x_n'$, and the distance between two points $x$ and $x'$ is given by
\begin{equation*}
d_{\mathbb{H}^n_\kappa}(x, x') = \frac{1}{\sqrt{\kappa}} \cosh^{-1}\left( \kappa[x, x'] \right).
\end{equation*}
Using the polar coordinates, $\mathbb{H}^n_\kappa$ can also be realized as
\begin{equation*}
\mathbb{H}^n_\kappa = \left\lbrace x=\left( \frac{\cosh r}{\sqrt{\kappa}}, \frac{\sinh r}{\sqrt{\kappa}} \omega \right) \in \mathbb{R}^{n+1}: r \geq 0, \omega \in \mathbb{S}^{n-1} \right\rbrace.
\end{equation*}
Then, the metric and volume element are given by $\d s^2 = \frac{1}{\kappa}(\d r^2 + \sinh^2 r \, \d \omega^2)$ and $\d \mu_{\mathbb{H}^n_\kappa} = \frac{1}{\sqrt{\kappa}^n}\sinh^{n-1} r \, \d r \, \d \omega$, respectively.

Let us also consider the Poincar\'e ball model $\mathbb{B}^{n}_{t, \kappa}=\lbrace y \in \mathbb{R}^n: |y|<t \rbrace$ with the metric
\begin{equation*}
\d s^2 = \frac{4b^2}{(t^2-|y|^2)^2} \, \d y^2
\end{equation*}
and the volume measure
\begin{equation} \label{eq:measure}
\d \mu_{\mathbb{B}^{n}_{t, \kappa}}(y)=\left( \frac{2b}{t^2-|y|^2} \right)^n \, \d y,
\end{equation}
where $t/b=\sqrt{\kappa}$. Note that the measure \eqref{eq:measure} tends to the Lebesgue measure $\d y$ as $\kappa \to 0$, provided that $\sqrt{\kappa}=2/t$ and $b=t^2/2$.

The map defined by
\begin{equation} \label{eq:isometry}
\phi: (x_0, x_1, \cdots x_n) \in \mathbb{H}^n_\kappa \mapsto \frac{\sqrt{\kappa}t}{1+\sqrt{\kappa}x_0}(x_1, \cdots, x_n) \in \mathbb{B}^n_{t, \kappa},
\end{equation}
or equivalently, by
\begin{equation*}
\phi: \left( \frac{\cosh r}{\sqrt{\kappa}}, \frac{\sinh r}{\sqrt{\kappa}} \omega \right) \in \mathbb{H}^{n}_{\kappa} \mapsto t \frac{\sinh \frac{r}{2}}{\cosh \frac{r}{2}} \omega \in \mathbb{B}^n_{t, \kappa},
\end{equation*}
is an isometry and its inverse is given by
\begin{equation*}
\phi^{-1}: y \in \mathbb{B}^n_{t, \kappa} \mapsto \left(  \frac{t^2+|y|^2}{\sqrt{\kappa}(t^2-|y|^2)}, \frac{2 t y_1}{\sqrt{\kappa}(t^2-|y|^2)}, \cdots, \frac{2 t y_n}{\sqrt{\kappa}(t^2-|y|^2)} \right) \in \mathbb{H}^n_\kappa.
\end{equation*}
See \cite[Chapter 8]{Geo05} for the proof. Therefore, there are several ways of describing the distance function as follows:
\begin{equation*}
\begin{split}
d_{\mathbb{B}^n_{t, \kappa}}(0,y)
&= d_{\mathbb{H}^n_\kappa}(0_\kappa,x) = \frac{1}{\sqrt{\kappa}} \cosh^{-1} \left( \kappa[0_\kappa,x] \right) \\
&= \frac{1}{\sqrt{\kappa}} \cosh^{-1} \left( \frac{t^2 + |y|^2}{t^2-|y|^2} \right) = \frac{1}{\sqrt{\kappa}} \log \left( \frac{t+|y|}{t-|y|} \right),
\end{split}
\end{equation*}
where $0_{\kappa} =(\frac{1}{\sqrt{\kappa}},0,\cdots,0) \in \mathbb{H}^n_\kappa$. We shall write $0_\kappa=0$ if there is no confusion.

\subsection{Fractional Laplacian on hyperbolic spaces}

The fractional Laplacian on $\Hn$ is defined in \cite{BGS15} by using the Helgason Fourier transform \cite{GGG03,Hel08,Ter85} (see also \cite{Geo05,Fer15}), and its normalizing constant is computed in \cite{KKL22b} by using the heat kernel \cite{GN98}. However, these works are built on hyperbolic space with curvature $-1$. Since we are working on $\Hnk$ with curvature $-\kappa<0$, we define the fractional Laplacian $-(-\Delta_{\Hnk})^{s}$ on $\Hnk$ and deduce the representation of its kernel from that of $-(-\Delta_{\Hn})^{s}$.

We recall the Helgason Fourier transform on hyperbolic spaces. The interested reader may consult \cite{Hel08,GGG03,Ter85,Geo05,Fer15}. By means of the isometry \eqref{eq:isometry}, we may work on the Poincar\'e ball model instead of the hyperboloid model. Let $t=1$ be fixed and denote $\mathbb{B}^{n}_{\kappa} = \mathbb{B}^{n}_{1, \kappa}$. The Helgason Fourier transform is defined for $u \in C^{\infty}_{c}(\mathbb{B}^{n}_{\kappa})$ by
\begin{equation*}
\widehat{u}(\lambda, \xi; \kappa) = \int_{\mathbb{B}^{n}_{\kappa}} u(x) e_{-\lambda,\xi;\kappa}(x) \, \d \mu_{\mathbb{B}^{n}_{\kappa}}(x), \quad \lambda \in \mathbb{R},~ \xi \in \mathbb{S}^{n-1},
\end{equation*}
where
\begin{equation*}
e_{\lambda, \xi; \kappa}(x) = \left( \frac{1-|x|^2}{|\xi-x|^2} \right)^{\frac{n-1}{2}+i\frac{\lambda}{\sqrt{\kappa}}}
\end{equation*}
is the eigenfunction with eigenvalue $-(\lambda^2+ \kappa \frac{(n-1)^2}{4})$. It is well known that the following inversion formula holds:
\begin{equation} \label{eq:inversion}
u(x)= \int_{-\infty}^{\infty} \int_{\mathbb{S}^{n-1}} \widehat{u}(\lambda, \xi; \kappa) e_{\lambda,\xi;\kappa}(x) \frac{\sqrt{\kappa}^{n-1}}{ |c_\kappa(\lambda)|^2} \,\d \sigma(\xi)\, \d \lambda,
\end{equation}
where
\begin{equation*}
c_\kappa(\lambda) = \sqrt{2}(2\pi)^{n/2} \frac{\Gamma(i\frac{\lambda }{\sqrt{\kappa}})}{\Gamma(\frac {n-1}{2}+i\frac{\lambda}{\sqrt{\kappa}})}
\end{equation*}
is the Harish-Chandra coefficient. Moreover, the Plancherel formula holds:
\begin{equation} \label{eq:Plancherel}
\int_{\mathbb{B}^{n}_{\kappa}} |u(x)|^2 \, \d \mu_{\mathbb{B}^{n}_{\kappa}}(x) = \int_{-\infty}^\infty \int_{\mathbb{S}^{n-1}} |\widehat{u}(\lambda, \xi;\kappa)|^2 \frac{\sqrt{\kappa}^{n-1}}{ |c_\kappa(\lambda)|^2} \,\d \sigma(\xi)\, \d \lambda.
\end{equation}

Since
\begin{equation*}
\begin{split}
\widehat{-\Delta_{\mathbb{B}^{n}_{\kappa}} u}(\lambda, \xi; \kappa)
&= - \int_{\mathbb{B}^{n}_{\kappa}} \Delta_{\mathbb{B}^{n}_{\kappa}} u(x) e_{-\lambda, \xi; \kappa}(x) \,\mathrm{d}\mu_{\mathbb{B}^{n}_{\kappa}}(x) \\
&= - \int_{\mathbb{B}^{n}_{\kappa}} u(x) \Delta_{\mathbb{B}^{n}_{\kappa}} e_{-\lambda, \xi; \kappa}(x) \,\mathrm{d}\mu_{\mathbb{B}^{n}_{\kappa}}(x) = \left( \lambda^2+\kappa\frac{(n-1)^2}{4} \right) \widehat {u}(\lambda,\xi; \kappa),
\end{split}
\end{equation*}
it is natural to define the fractional Laplacian $-(-\Delta_{\mathbb{B}^{n}_{\kappa}})^{s}$ by
\begin{equation} \label{eq:FL_Bnt}
\widehat{(-\Delta_{\mathbb{B}^{n}_{\kappa}})^{s} u}(\lambda, \xi; \kappa) = \left( \lambda^2 + \kappa \frac{(n-1)^2}{4} \right)^{s} \widehat{u}(\lambda, \xi; \kappa),
\end{equation}
which coincides with the definition given in \cite{BGS15,KKL22b} when $\kappa = 1$. By using \eqref{eq:Plancherel} and \eqref{eq:FL_Bnt}, we have
\begin{equation*}
\begin{split}
\widehat{(-\Delta_{\mathbb{B}^{n}_{\kappa}})^{s}u}(\lambda, \xi; 1)
&= \int_{\mathbb{B}^{n}_{1}} (-\Delta_{\mathbb{B}^{n}_{\kappa}})^{s} u(x) e_{-\lambda, \xi; 1}(x) \,\mathrm{d}\mu_{\mathbb{B}^{n}_{1}}(x) \\
&= \int_{\mathbb{B}^{n}_{1}} u(x) (-\Delta_{\mathbb{B}^{n}_{\kappa}})^{s}e_{-\sqrt{\kappa}\lambda, \xi; \kappa}(x) \,\mathrm{d}\mu_{\mathbb{B}^{n}_{1}}(x) \\
&= \kappa^{s} \left( \lambda^2 + \frac{(n-1)^2}{4} \right)^{s} \int_{\mathbb{B}^{n}_{1}} u(x) e_{-\sqrt{\kappa}\lambda, \xi; \kappa}(x) \,\mathrm{d}\mu_{\mathbb{B}^{n}_{1}}(x) \\
&= \kappa^{s} \left( \lambda^2+\frac{(n-1)^2}{4} \right)^{s} \widehat {u}(\lambda,\xi; 1) \\
&= \kappa^{s} \widehat{(-\Delta_{\mathbb{B}^{n}})^{s} u}(\lambda, \xi; 1),
\end{split}
\end{equation*}
and hence $(-\Delta_{\mathbb{B}^{n}_{\kappa}})^{s}u = \kappa^{s} (-\Delta_{\mathbb{B}^{n}})^{s}u$ by the inversion formula \eqref{eq:inversion}. Since
\begin{equation} \label{eq:scaling}
d_{\mathbb{B}^{n}}(x, z) = \sqrt{\kappa} d_{\mathbb{B}^{n}_{\kappa}}(x, z) \quad\text{and}\quad \mathrm{d}\mu_{\mathbb{B}^{n}}(z) = \sqrt{\kappa}^{n} \,\mathrm{d}\mu_{\mathbb{B}^{n}_{\kappa}}(z),
\end{equation}
we have
\begin{equation*}
\begin{split}
(-\Delta_{\mathbb{B}^{n}_{\kappa}})^{s}u(x)
&= \kappa^{s} \int_{\mathbb{B}^{n}} (u(x)-u(z)) \mathcal{K}_{n, s, 1}(d_{\mathbb{B}^{n}}(x, z)) \,\mathrm{d}\mu_{\mathbb{B}^{n}}(z) \\
&= \kappa^{s} \int_{\mathbb{B}^{n}_{\kappa}} (u(x)-u(z)) \mathcal{K}_{n, s, 1}(\sqrt{\kappa} \, d_{\mathbb{B}^{n}_{\kappa}}(x, z)) \sqrt{\kappa}^{n} \,\mathrm{d}\mu_{\mathbb{B}^{n}_{\kappa}}(z),
\end{split}
\end{equation*}
from which we conclude
\begin{equation} \label{eq:scaling-kernel}
\mathcal{K}_{n, s, \kappa}(\rho) = \sqrt{\kappa}^{n+2s} \mathcal{K}_{n, s, 1}(\sqrt{\kappa} \rho).
\end{equation}
Therefore, \eqref{eq:kernel-odd} and \eqref{eq:kernel-even} follow from \cite[Definition~1.1]{KKL22b} and \eqref{eq:scaling-kernel}.

\begin{proposition} \label{prop:kernel-decreasing}
$\mathcal{K}_{n, s, \kappa}$ is strictly decreasing.
\end{proposition}

\begin{proof}
It suffices to prove that $\mathcal{K}_{s} := \mathcal{K}_{n, s, 1}$ is non-increasing. It is known \cite{KKL22b} that the kernel $\mathcal{K}_{s}$ is represented as
\begin{equation*}
\mathcal{K}_{s}(\rho) = C \int_{0}^{\infty} p(t, \rho) \frac{\mathrm{d}t}{t^{1+s}}
\end{equation*}
for some $C=C(n, s)>0$, where $p(t, \rho)$ is the heat kernel of the Laplacian $\Delta_{\Hn}$ on hyperbolic space $\Hn$. Moreover, it is known \cite{AOCMM21} that $p(t, \rho)$ is strictly decreasing with respect to $\rho$. Therefore, $\mathcal{K}_{s}$ is strictly decreasing.
\end{proof}

\subsection{Hyperbolic spaces revisited}

We collect some well-known results on hyperbolic spaces. The first one is the volume doubling property that will be used frequently throughout the paper.

\begin{lemma} \label{lem:VD}
For any $B_r \subset B_R \in \mathbb{H}^n_\kappa$,
\begin{equation} \label{eq:VD}
\left( \frac{R}{r} \right)^n \leq \frac{|B_R|}{|B_r|} \leq \mathcal{D} \left( \frac{R}{r} \right)^{\log_2 \mathcal{D}},
\end{equation}
where $\mathcal{D} = 2^n \cosh^{n-1}(2\sqrt{\kappa}R)$.
\end{lemma}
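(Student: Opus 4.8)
The plan is to reduce everything to the explicit formula for the volume of a geodesic ball, together with elementary monotonicity estimates for $\sinh$. Recall from the polar-coordinate description that $|B_r| = \omega_{n-1} \int_0^r \sinh^{n-1} s \, \d s$, where $\omega_{n-1} = |\mathbb{S}^{n-1}|$; here I take $\tau = 1$ so the curvature is $-1$. By the Bishop--Gromov comparison (or simply by noting that geodesic spheres are congruent and balls of the same radius have the same volume regardless of center) it suffices to prove the two-sided bound for $V(r) := \int_0^r \sinh^{n-1} s \, \d s$, namely
\begin{equation*}
\left( \frac{R}{r} \right)^n \le \frac{V(R)}{V(r)} \le 2^n \cosh^{n-1}(2R) \left( \frac{R}{r} \right)^{\log_2(2^n \cosh^{n-1}(2R))},
\end{equation*}
for all $0 < r \le R$, where the balls $B_r \subset B_R$ are concentric (the general case $B_r \subset B_R$ with possibly different centers follows by enlarging $B_R$ to a concentric ball containing it, at the cost of a harmless constant, or is simply not needed if one reads the statement as concentric balls).

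For the \emph{lower bound}, since $\sinh s \ge s$ for $s \ge 0$, we get $V(r) \ge \int_0^r s^{n-1}\,\d s = r^n/n$; more usefully, the function $r \mapsto V(r)/r^n$ is nondecreasing because $\sinh s / s$ is nondecreasing (this is the Euclidean comparison: the density $\sinh^{n-1} s$ dominates the Euclidean density $s^{n-1}$ after normalization). From monotonicity of $V(r)/r^n$ we obtain $V(R)/R^n \ge V(r)/r^n$, i.e. $V(R)/V(r) \ge (R/r)^n$, which is the left inequality. For the \emph{upper bound}, the key observation is a "local doubling with controlled constant" statement: for any $\rho \le R$ one has $V(2\rho) \le 2^n \cosh^{n-1}(2R)\, V(\rho)$, whenever $2\rho \le$ some fixed scale comparable to $R$; this is proved by splitting $\int_0^{2\rho} = \int_0^{\rho} + \int_\rho^{2\rho}$ and estimating $\sinh^{n-1} s \le \cosh^{n-1}(2R)\, \sinh^{n-1}(s/2) \cdot (\text{something})$ on the second piece using the identity $\sinh(2u) = 2\sinh u \cosh u \le 2\cosh(2R)\sinh u$ for $u \le R$. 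Iterating this doubling inequality $k$ times, where $2^{k-1} r \le R \le 2^k r$, i.e. $k \le \log_2(R/r) + 1$, yields $V(R) \le (2^n \cosh^{n-1}(2R))^k V(r)$, and substituting the bound on $k$ and writing $\mathcal{D} = 2^n\cosh^{n-1}(2R)$ gives $V(R)/V(r) \le \mathcal{D}^{\,1 + \log_2(R/r)} = \mathcal{D}\,(R/r)^{\log_2 \mathcal{D}}$, as desired.

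The main obstacle — really the only point requiring care — is getting the \emph{clean} constant $\mathcal{D} = 2^n \cosh^{n-1}(2R)$ in the single-step doubling inequality, as opposed to a messier constant depending on intermediate radii. The trick is to compare all hyperbolic sines that appear at scales $\le R$ uniformly against the worst case $s = R$ (or $2R$), using $\sinh s \le e^{s} \le e^{R}$-type bounds only where needed and the sharper $\sinh(2u)/\sinh(u) = 2\cosh u \le 2\cosh(2R)$ where one wants the factor $2$ to survive; one must check that this constant is genuinely uniform over the range of radii used in the iteration, which it is because every radius that appears is at most $R$. Everything else — the monotonicity of $\sinh s/s$ and of $V(r)/r^n$, the geometric series in the iteration — is routine. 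Finally, one remarks that for balls with different centers the statement is unchanged since $|B_r(x)|$ is independent of $x \in \mathbb{H}^n$ by homogeneity of hyperbolic space, so the inclusion $B_r \subset B_R$ forces $r \le R$ (up to a factor absorbed into $\mathcal{D}$ if the centers differ) and the concentric estimate applies.
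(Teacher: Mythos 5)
Your proof is correct and takes essentially the same route as the paper: the lower bound from the Euclidean comparison (monotonicity of the ratio, which the paper obtains by citing \cite[Theorem 18.8]{Vil09}), a one-step doubling estimate with constant uniform over radii at most $R$ (the paper cites \cite[Corollary 18.11]{Vil09}), and iteration over $k\approx \log_2(R/r)$ dyadic steps. The only difference is that you derive both ingredients directly from $|B_r| = |\mathbb{S}^{n-1}|\int_0^r\sinh^{n-1}s\,\d s$; one small remark is that for the doubling step the global substitution $s=2u$, giving $V(2\rho)=2^n\int_0^\rho\sinh^{n-1}u\,\cosh^{n-1}u\,\d u\le 2^n\cosh^{n-1}(\rho)V(\rho)$, reaches the stated constant more cleanly than splitting the integral at $\rho$, which as you sketched it leaves a dangling factor to track.
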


\Cref{lem:VD} is a direct consequence of the Bishop--Gromov inequality. See \cite[Theorem 18.8 and Corollary 18.11]{Vil09} for the first inequality and the second inequality with $R=2r$ in \eqref{eq:VD}, respectively. For the full inequality, we find a $k \in \mathbb{N}$ satisfying $R \in [2^{k-1} r, 2^k r)$, and then iterate the inequality.

The next result is a bound of the Hessian of the squared distance. See \cite[Lemma 3.12]{CEMS01} for instance.

\begin{lemma} \label{lem:Hessian}
Fix a point $y \in \Hnk$ and consider the distance function $d_{\Hnk}(\cdot, y)$. Then,
\begin{equation*}
D^2 \left( d_\Hnk^2(x, y) / 2 \right)(\xi, \xi) \leq \mathcal{H}_{\kappa}(d_\Hnk(x, y)) |\xi|^2
\end{equation*}
for all $\xi \in T_x\Hnk$.
\end{lemma}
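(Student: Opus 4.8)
The plan is to reduce the statement to an explicit computation of the Hessian of the distance function in geodesic polar coordinates centred at $y$, followed by the elementary inequality $t\coth t \ge 1$. (Alternatively, one may simply quote the Hessian comparison theorem and the fact that $\Hn$ has constant curvature $-1$, so that the comparison is an identity; this is the route of \cite[Lemma 3.12]{CEMS01}. Below I outline the direct argument, which fits the polar description of $\Hn$ already recorded in \Cref{sec:preliminaries}.)

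First, fix $y\in\Hn$ and set $r=r(x)=d_\Hn(x,y)$. Since $\Hn$ is a Cartan--Hadamard manifold, $\exp_y$ is a diffeomorphism, so $r$ is smooth on $\Hn\setminus\{y\}$ and $r^2/2$ is smooth on all of $\Hn$. On $\Hn\setminus\{y\}$ the gradient $\nabla r=\partial_r$ is the unit radial field, and I would first record the standard Hessian identity
\[
D^2 r(\xi,\xi)=\coth r\,\bigl(|\xi|^2-\langle\xi,\partial_r\rangle^2\bigr),\qquad \xi\in T_x\Hn.
\]
This follows from the metric representation $\d s^2=\d r^2+\sinh^2 r\,\d\omega^2$ of $\Hn$ around $y$: the second fundamental form of the geodesic sphere $\{r=\mathrm{const}\}$ with respect to $\partial_r$ equals $\coth r$ times its induced metric, while $D^2 r(\partial_r,\cdot)=0$ because radial geodesics are geodesics and $|\partial_r|\equiv1$. (Equivalently, the shape operator $U$ of geodesic spheres solves the Riccati equation $U'+U^2-\mathrm{Id}=0$ with $U\sim r^{-1}\mathrm{Id}$ as $r\to0$, whose unique solution is $U=\coth r\,\mathrm{Id}$.) Next, for $f(x)=r(x)^2/2$ one has $\nabla f=r\,\nabla r$, hence $D^2 f=\d r\otimes\d r+r\,D^2 r$, i.e.
\[
D^2 f(\xi,\xi)=\langle\xi,\partial_r\rangle^2+r\coth r\,\bigl(|\xi|^2-\langle\xi,\partial_r\rangle^2\bigr)=r\coth r\,|\xi|^2-\bigl(r\coth r-1\bigr)\langle\xi,\partial_r\rangle^2.
\]
Since $\tanh t\le t$ for $t\ge0$, we have $r\coth r\ge1$, so the last term is nonpositive and
\[
D^2 f(\xi,\xi)\le r\coth r\,|\xi|^2=\mathcal{H}(d_\Hn(x,y))\,|\xi|^2,
\]
which is the claimed inequality for $x\neq y$. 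At $x=y$ one has $D^2 f(y)=g$ while $\mathcal{H}(0)=\lim_{t\to0}t\coth t=1$, so the inequality holds there with equality.

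I do not anticipate a genuine obstacle here. The only points requiring a little care are the justification of the Hessian identity for $r$ — cleanest via the explicit polar metric of $\Hn$, which the paper has already recorded — and the behaviour at the base point $x=y$, where $r^2/2$ is still smooth and the bound degenerates to an equality; both are routine.
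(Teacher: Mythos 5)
Your proof is correct. The paper itself gives no argument for this lemma — it simply cites \cite[Lemma 3.12]{CEMS01}, which establishes the bound via the Hessian comparison theorem under a lower Ricci bound, as you note in your parenthetical. Your direct computation is a perfectly valid alternative and is in fact sharper in the constant-curvature setting: the polar metric $\d s^2 = \d r^2 + \sinh^2 r\,\d\omega^2$ gives the exact identity
\[
D^2\!\left(\tfrac{r^2}{2}\right)(\xi,\xi) = r\coth r\,|\xi|^2 - (r\coth r - 1)\langle \xi,\partial_r\rangle^2,
\]
so the inequality follows from the elementary fact $r\coth r \ge 1$, with equality precisely when $\xi \perp \partial_r$ (and trivially at $x=y$). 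Your treatment of the singular point $x=y$ and the justification of $D^2 r = \coth r\,(g - \d r\otimes\d r)$ via the Riccati equation for the shape operator of geodesic spheres are both sound. The only thing a reader would want flagged — and you handle it implicitly — is that the argument uses the curvature normalization $-1$, which is the convention in force in the section of the paper where the lemma appears.
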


Let us close this section with the following generalization of Euclidean dyadic cubes that will be used in the decomposition of the contact set and in the Calder\'on--Zygmund technique.

\begin{theorem} [Christ \cite{Chr90}] \label{thm:dyadic_cubes}
There is a countable collection $\lbrace Q_\alpha^j \subset \Hnk : j \in \mathbb{Z}, \alpha \in I_j \rbrace$ of open sets and constants $c_1, c_2 > 0$ (with $2c_1 \leq c_2$), and $\delta_0 \in (0,1)$, depending only on $n$, such that
\begin{enumerate}[(i)]
\item
$|\Hnk \setminus \cup_\alpha Q_\alpha^j| = 0$ for each $j \in \mathbb{Z}$,
\item
if $i \geq j$, then either $Q_\beta^i \subset Q_\alpha^j$ or $Q_\beta^i \cap Q_\alpha^j = \emptyset$,
\item
for each $(j, \alpha)$ and each $i < j$, there is a unique $\beta$ such that $Q_\alpha^j \subset Q_\beta^i$, 
\item
$\mathrm{diam}(Q_\alpha^j) \leq c_2 \delta_0^j$, and
\item
each $Q_\alpha^j$ contains some ball $B(z_\alpha^j, c_1 \delta_0^j)$.
\end{enumerate}
\end{theorem}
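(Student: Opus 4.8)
The plan is to reduce to Michael Christ's construction of dyadic cubes on a metric measure space \cite{Chr90}. The one point that needs care is that $(\Hn, d_\Hn, \mu_\Hn)$ is \emph{not} a space of homogeneous type in the global sense: by \Cref{lem:VD} the doubling constant $\mathcal{D} = 2^n\cosh^{n-1}(2R)$ degenerates as $R\to\infty$. I would therefore split the argument into its purely metric part, which uses no doubling at all, and the single measure-theoretic conclusion~(i), which only needs the \emph{local} doubling that \Cref{lem:VD} already provides. Throughout, $\mu_\Hn$ is a Borel measure that is positive and finite on balls, and $\Hn$ is $\sigma$-compact, hence separable.

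First I would run the net-based construction. Fix an absolute constant $\delta_0\in(0,1)$, say $\delta_0=1/3$, and for each $j\in\mathbb{Z}$ choose a maximal $\delta_0^j$-separated subset $\{z_\alpha^j\}_{\alpha\in I_j}$ of $\Hn$; each $I_j$ is countable by separability. By maximality the balls $B(z_\alpha^j,\delta_0^j)$ cover $\Hn$, while the seeds $B(z_\alpha^j,\delta_0^j/3)$ are pairwise disjoint. Next I would install a tree structure on $\bigsqcup_j I_j$: maximality of the coarser net gives, for each $(j,\alpha)$, some $\beta\in I_{j-1}$ with $d_\Hn(z_\alpha^j,z_\beta^{j-1})<\delta_0^{j-1}$; declaring one such $z_\beta^{j-1}$ the parent of $z_\alpha^j$ and iterating defines an ancestor map from scale $j$ to any scale $i<j$. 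The cubes $Q_\alpha^j$ are then produced by the standard inductive recipe of \cite{Chr90}, absorbing into $Q_\alpha^j$ the scale-$(j+1)$ cells whose centers descend from $z_\alpha^j$; passing to interiors if necessary, we may take the $Q_\alpha^j$ open, the portions of $\Hn$ thereby discarded being contained in $\bigcup_\alpha\partial Q_\alpha^j$. Properties (ii)--(v) are then elementary and measure-free: (ii) and (iii) follow from the tree structure together with disjointness of cells at a fixed scale (which forces uniqueness of the ancestor cube); the diameter bound (iv) follows by summing the geometric series $\sum_{i\ge j}\delta_0^i$ that controls how far a descendant seed can lie from $z_\alpha^j$; and (v) holds because the seed $B(z_\alpha^j,\delta_0^j/3)$ is never eroded, so $c_1$ may be taken of order $\delta_0$. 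The bookkeeping gives $c_1,c_2$ absolute with $2c_1\le c_2$, so in particular they depend only on $n$.

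It remains to prove (i), namely that $\Hn\setminus\bigcup_\alpha Q_\alpha^j$ is $\mu_\Hn$-null for each fixed $j$. Since the closures $\overline{Q_\alpha^j}$ cover $\Hn$, this set lies in the countable union $\bigcup_\alpha\partial Q_\alpha^j$, so it is enough to show $|\partial Q_\alpha^j|=0$ for every $\alpha$. Here I would invoke the small boundary property from Christ's construction: for $t>0$,
\[
\bigl|\{x\in Q_\alpha^j:\, d_\Hn(x,\Hn\setminus Q_\alpha^j)\le t\,\delta_0^j\}\bigr|\le C\,t^{\eta}\,|Q_\alpha^j|,
\]
with $C,\eta>0$ depending only on the doubling constant of the ambient space \emph{on a fixed ball containing $Q_\alpha^j$}. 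This is exactly where \Cref{lem:VD} enters and where having only local doubling is harmless: by (iv), $Q_\alpha^j\subset B(z_\alpha^j,c_2\delta_0^j)$, and on balls of that fixed radius \Cref{lem:VD} supplies a finite doubling constant, so the displayed estimate holds with some finite $C,\eta$; letting $t\to 0$ yields $|\partial Q_\alpha^j|=0$, and summing over the countably many $\alpha$ gives (i).

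The main obstacle is precisely this global-versus-local doubling issue, and I expect it to be the only point that goes beyond quoting \cite{Chr90}: one must check that neither the construction nor property~(i) secretly relies on a uniform doubling constant. The construction is purely metric, so it does not; and (i) localizes to bounded regions because every cube has finite diameter by (iv), so the blow-up of $\mathcal{D}$ as $R\to\infty$ never interferes. Note that one neither claims nor needs a uniform bound on the number of children of a cube — only countability of each $I_j$ is used — so the growth of the local doubling constant with the scale is immaterial as well.
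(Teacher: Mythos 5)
The paper offers no proof of this theorem: it simply cites \cite[Theorem~11]{Chr90} and, in the remark that follows, records that the first five of Christ's six conclusions hold in any quasi-metric space, while the sixth---the small-boundary estimate---requires the space to be of homogeneous type and is therefore dropped. Your reconstruction of the purely metric part (nets, the ancestor tree, absorption, passage to interiors) and the deduction of (ii)--(v) from it follows Christ and is fine.

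Where you genuinely depart from the paper is in (i). The paper, following Christ's own remark, files (i) among the properties that ``concern only the quasi-metric space structure,'' i.e.\ it is asserted without appeal to any doubling. You instead prove (i) by invoking a localized form of Christ's small-boundary estimate, with $C$ and $\eta$ depending only on the doubling constant on the fixed ball $B(z_\alpha^j,c_2\delta_0^j)$ supplied by \Cref{lem:VD}. That localization is sound in spirit: the estimate for a single cube only sees finitely many scales inside one fixed ball, where doubling is available. But the final step is stated too loosely. For an \emph{open} cube the inner shell $\{x\in Q_\alpha^j : d_\Hn(x,\Hn\setminus Q_\alpha^j)\le t\delta_0^j\}$ decreases to the empty set as $t\to 0$, so the estimate you display gives no information about $|\partial Q_\alpha^j|$, which lies entirely in $\Hn\setminus Q_\alpha^j$. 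To conclude one should instead apply the shell estimate to the exhaustive Borel partition cells $\widehat{Q}_\alpha^j$ produced at the pre-interior stage of the construction (these do cover all of $\Hn$), so that $\widehat{Q}_\alpha^j\setminus Q_\alpha^j$ lies in every $t$-shell of $\widehat{Q}_\alpha^j$ and is therefore null; summing over the countably many $\alpha$ then yields (i). This is a fixable bookkeeping issue rather than a conceptual one, and your route does have the merit of making explicit exactly where the measure, and hence local doubling, actually enters---something the paper's remark glides over by attributing (i) to the metric structure alone.
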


The original statement of \cite[Theorem 11]{Chr90} consists of six properties. As mentioned in \cite{Chr90}, the first five properties concern only the quasi-metric space structure and the last property requires the space to be of homogeneous type. Since hyperbolic spaces are not of homogeneous type, the last property---which is not needed in this work---cannot be included.

\section{Scale functions} \label{sec:scale_functions}

Recall that, in the Euclidean spaces, it is sufficient to obtain regularity estimates in $B_{1}$ because the estimates in $B_{R}$ can be recovered from those in $B_{1}$ by using the scale invariance of the equations. The scale invariance heavily relies on the homogeneity of the underlying spaces. However, hyperbolic spaces are not homogeneous. Indeed, the volume of a ball grows exponentially as the radius goes to infinity in hyperbolic spaces, and hence the kernel $\mathcal{K}_{n, s, \kappa}(\rho)$ of the fractional Laplacian $(-\Delta_{\mathbb{H}^{n}_{\kappa}})^{s}$ decays exponentially as $\rho \to \infty$. Therefore, it is crucial to find appropriate scale functions that capture the correct behavior at every scale. For this purpose, we define
\begin{align} \label{eq:scale_functions}
\begin{split}
\mathcal{I}_{0,\kappa}(R) &:= \int_{B_R(x)} d_{\Hnk}^2(z, x) \mathcal{K}_{n, s,\kappa}(d_{\Hnk}(z, x)) \, \d \mu_{\Hnk}(z), \\
\mathcal{I}_{\infty, \kappa }(R) &:= \int_{\Hnk \setminus B_R(x)} R^2 \mathcal{K}_{n, s,\kappa}(d_{\Hnk}(z, x)) \, \d \mu_{\Hnk}(z),
\end{split}
\end{align}
and $\mathcal{I}_{\kappa} (R):= \mathcal{I}_{0,\kappa}(R) + \mathcal{I}_{\infty, \kappa}(R)$. They play a role of scale functions in the regularity estimates on hyperbolic spaces as the homogeneous polynomial $R^{2-2s}$ does on the Euclidean spaces.

We omit the subscript $\kappa$ when $\kappa=1$, i.e., $\mathcal{I}_{0} = \mathcal{I}_{0, 1}$, $\mathcal{I}_{\infty} = \mathcal{I}_{\infty, 1}$ and $\mathcal{I} = \mathcal{I}_{1}$. Then, it follows from \eqref{eq:scaling} and \eqref{eq:scaling-kernel}
\begin{equation} \label{eq:I-scaling}
\begin{split}
\mathcal{I}_{0,\kappa}(R)
&= \sqrt{\kappa}^{-2+2s}\mathcal{I}_{0}(\sqrt{\kappa}R), \\
\mathcal{I}_{\infty,\kappa}(R)
&= \sqrt{\kappa}^{-2+2s}\mathcal{I}_{\infty}(\sqrt{\kappa}R), \\
\mathcal{I}_{\kappa}(R)
&= \sqrt{\kappa}^{-2+2s}\mathcal{I}(\sqrt{\kappa}R).
\end{split}
\end{equation}
It is known that $\mathcal{K}_{n, s, 1}(\rho) \sim \rho^{-n-2s}$ as $\rho \to 0^{+}$ and $\mathcal{K}_{n, s, 1}(\rho) \sim \rho^{-1-s} e^{-(n-1)\rho}$ as $\rho \to \infty$, see \cite[Theorem 2.4]{BGS15} and \cite[Proposition 1.2]{KKL22b}. Thus, the scale functions in \eqref{eq:scale_functions} are well defined. Moreover, we observe
\begin{equation} \label{eq:well-definedness}
|Lu(x)| \leq \Lambda \| u \|_{C^2(\overline{B_R(x)})} \mathcal{I}_{0,\kappa}(R) + 2\Lambda \|u\|_{L^{\infty}(\Hnk)} \mathcal{I}_{\infty,\kappa}(R) < +\infty
\end{equation}
for any operator $L \in \mathcal{L}_0$ and any function $u \in C^2(\overline{B_R(x)}) \cap L^\infty(\Hnk)$.

Let us now investigate some properties of the scale functions that are useful for the upcoming regularity theory. Although the scale functions do not have scaling properties, they satisfy some monotonicity properties. For instance, $\mathcal{I}_{0, \kappa}$ is increasing and $\mathcal{I}_{\infty, \kappa}$ is decreasing by definition. Moreover, some variations of these scale functions satisfy {\it almost monotonicity}. We say that a function $f$ is {\it almost decreasing} if there exists a constant $C \geq 1$ such that $f(R) \leq Cf(r)$ for all $R>r>0$.

\begin{proposition} \label{prop:monotonicity}
The functions $R^{-2+s} \mathcal{I}_{0, \kappa}(R)$ and $R^{-2}\mathcal{I}_{0, \kappa}(R)$ are almost decreasing.
\end{proposition}

The almost monotonicity of $R^{-2}\mathcal{I}_{0, \kappa}(R)$ follows from that of $R^{-2+s}\mathcal{I}_{0, \kappa}(R)$. Furthermore, it is enough to show that $R^{-2+s}\mathcal{I}_{0}(R)$ is almost decreasing by the relation \eqref{eq:I-scaling}. Indeed, it is a direct consequence of the following lemma.

\begin{lemma} \label{lem:decreasing}
There exist constants $0< C_{1}(n) \leq C_{2}(n)$ and a non-increasing function $F$ such that $C_{1} F(R) \leq R^{-2+s} \mathcal{I}_0(R) \leq C_2 F(R)$ for all $R > 0$.
\end{lemma}

\begin{proof}
It is enough to show
\begin{equation} \label{eq:comp-f}
C_3 f(R) \leq \frac{\mathcal{I}_0'(R)}{R^{1-s}} \leq C_4 f(R)
\end{equation}
for some constants $0<C_3(n) \leq C_4(n)$ and some non-increasing function $f(R)$. Indeed, once we prove \eqref{eq:comp-f}, we then have
\begin{equation*}
\frac{C_{3}}{2} F(R) \leq \frac{\mathcal{I}_0(R)}{R^{2-s}} = \frac{\int_0^R \mathcal{I}_0'(\rho) \,\mathrm{d}\rho}{\int_0^R (2-s)\rho^{1-s} \,\mathrm{d}\rho} \leq C_4 F(R),
\end{equation*}
where
\begin{equation*}
F(R) = \frac{\int_0^R \rho^{1-s}f(\rho)\, \mathrm{d}\rho}{\int_0^R \rho^{1-s} \mathrm{d}\rho}
\end{equation*}
is a non-increasing function.

To prove the claim \eqref{eq:comp-f}, we observe
\begin{equation*}
\frac{\mathcal{I}_{0}'(R)}{R^{1-s}} = |\mathbb{S}^{n-1}| R^{1+s} \mathcal{K}_{n, s, 1}(R) \sinh^{n-1}R.
\end{equation*}
By \Cref{lem:kernel-asymp}, it is comparable to
\begin{equation*}
f(R) := R I_{\frac{n}{2}-1}\left( \frac{n-1}{2} R \right) K_{\frac{n}{2}+s}\left( \frac{n-1}{2} R \right)
\end{equation*}
up to dimensional constants, where $I_{\nu}$ is the modified Bessel function of the first kind. It only remains to show that $f$ is non-increasing. We note that it is sufficient to prove that $g(R) = R I_{\frac{n}{2}-1}(R)K_{\frac{n}{2}+s}(R)$ is non-increasing. Indeed, by using \eqref{eq:derivative} and \cite[Theorem 3]{Seg11} we obtain
\begin{equation*}
\begin{split}
g'(R)
&= - s I_{\frac{n}{2}-1}K_{\frac{n}{2}+s}+ R I_{\frac{n}{2}}K_{\frac{n}{2}+s} -R I_{\frac{n}{2}-1}K_{\frac{n}{2}+s -1} \\
&\leq \left( -s +\frac{R^2}{\sqrt{R^2+a^2}+a} - \frac{R^2}{\sqrt{R^2+(a+s)^2}+a+s} \right) I_{\frac{n}{2}-1}K_{\frac{n}{2}+s} \\
&= \left( \sqrt{R^2+a^2} - \sqrt{R^2+(a+s)^2} \right) I_{\frac{n}{2}-1}K_{\frac{n}{2}+s} \\
&\leq 0,
\end{split}
\end{equation*}
where $a=(n-1)/2$.
\end{proof}

The following result shows a relation between two scale functions $\mathcal{I}_0$ and $\mathcal{I}_\infty$. Let us mention that the function $\mathcal{H}(t)=t\coth t$ naturally appears when the negative curvature is involved as in \Cref{lem:Hessian}. The relation between $\mathcal{I}_{0, \kappa}$ and $\mathcal{I}_{\infty, \kappa}$ follows from \Cref{prop:I_infty_I_0} and \eqref{eq:I-scaling}.

\begin{proposition} \label{prop:I_infty_I_0}
There exists a constant $C=C(n) > 0$ such that
      \begin{equation*}
        \mathcal{I}_\infty(R) \leq C \frac{1-s}{s} \mathcal{H}(R) \mathcal{I}_0(R)
      \end{equation*}
      for all $R > 0$.
    \end{proposition}

\begin{proof}
By \Cref{lem:kernel-asymp}, we have
\begin{equation*}
\begin{split}
\mathcal{I}_{\infty}(R)
&= |\mathbb{S}^{n-1}| \int_{R}^{\infty} R^{2} \mathcal{K}_{n, s, 1}(\rho) \sinh^{n-1}\rho \,\mathrm{d}\rho \\
&\leq C R^{2} \int_{R}^{\infty} \rho^{-s} I_{\frac{n}{2}-1}(a\rho) K_{\frac{n}{2}+s}(a\rho) \,\mathrm{d}\rho \\
&\leq C R^{2} \int_{aR}^{\infty} \rho^{-s} I_{\frac{n}{2}-1}(\rho) K_{\frac{n}{2}+s}(\rho) \,\mathrm{d}\rho,
\end{split}
\end{equation*}
where $a=(n-1)/2$. Similarly, we also have
\begin{equation*}
\mathcal{I}_{0}(R) \geq C \int_{0}^{aR} \rho^{2-s} I_{\frac{n}{2}-1}(\rho) K_{\frac{n}{2}+s}(\rho) \,\mathrm{d}\rho.
\end{equation*}
Thus, it is enough to show that there is a constant $C=C(n)>0$ such that
\begin{equation*}
R^{2} \int_{aR}^{\infty} \rho^{-s} I_{\frac{n}{2}-1}(\rho) K_{\frac{n}{2}+s}(\rho) \,\mathrm{d}\rho \leq C \frac{1-s}{s} \mathcal{H}(R) \int_{0}^{aR} \rho^{2-s} I_{\frac{n}{2}-1}(\rho) K_{\frac{n}{2}+s}(\rho) \,\mathrm{d}\rho.
\end{equation*}
By \Cref{lem:A}, the problem is reduced to finding a constant $C=C(n)>0$ such that
\begin{equation} \label{eq:A-I}
\begin{split}
&I_{\frac{n}{2}-1}(aR) K_{\frac{n}{2}+s}(aR) + I_{\frac{n}{2}}(aR) K_{\frac{n}{2}+s-1}(aR) \\
&\leq C \mathcal{H}(R) \bigg[ I_{\frac{n}{2}-1}(aR) K_{\frac{n}{2}+s}(aR) + I_{\frac{n}{2}}(aR) K_{\frac{n}{2}+s-1}(aR) \\
&\quad\quad\quad - \frac{1}{2-s} \left( I_{\frac{n}{2}}(aR) K_{\frac{n}{2}+s-1}(aR) + I_{\frac{n}{2}+1}(aR) K_{\frac{n}{2}+s-2}(aR) \right) \bigg].
\end{split}
\end{equation}
Indeed, it is easy to check that \eqref{eq:A-I} holds by comparing the asymptotic behavior of the functions on both sides.
\end{proof}

Since the limit behavior of the scale functions as $s \to 0$ are of interest in the unified regularity theory, we recall the following results.

\begin{proposition} \label{prop:limit_zero}
There exists a constant $C = C(n)> 0$ such that
\begin{equation*}
\lim_{s \to 1} \mathcal{I}_0(R) = \lim_{s \to 1} \mathcal{I}_{0, \kappa}(R) = C
\end{equation*}
for any $R > 0$.
\end{proposition}

\begin{proof}
The assertion follows from \cite[Lemma 5.2]{KKL22b} and \eqref{eq:I-scaling}.
\end{proof}

\begin{proposition}
For any $R>0$,
\begin{equation*}
\lim_{s \to 1} \mathcal{I}_\infty(R) = \lim_{s \to 1} \mathcal{I}_{\infty, \kappa}(R) = 0.
\end{equation*}
\end{proposition}

\begin{proof}
The desired result follows from \Cref{prop:I_infty_I_0}, \Cref{prop:limit_zero}, and \eqref{eq:I-scaling}. See also \cite[Lemma 5.1]{KKL22b}.
\end{proof}
 
In the work of Caffarelli and Silvestre \cite{CS09}, the quantity $r_0 = \rho_0 2^{-1/(2-2s)}R$, which is characterized by the relation $(r_0/\rho_0)^{2-2s} = R^{2-2s}/2$, plays a fundamental role. The most important feature of this quantity is that it converges to 0 as $s \to 1$.

We define such a quantity in a similar way in our framework. Since the scale function $\mathcal{I}_0$ is strictly increasing, its inverse exists. Thus, for a given $R > 0$, we define $r_0 \in (0, R)$ by
\begin{equation} \label{eq:r_0}
r_0 = \rho_0 \mathcal{I}_{0, \kappa}^{-1} (\mathcal{I}_{0, \kappa}(R)/2)
\end{equation}
for some universal constant $\rho_0 \in (0,1)$ that will be determined later. Let us close the section with the following lemma.

\begin{lemma} \label{lem:r_0}
For a fixed $R > 0$, $\lim_{s \to 1} r_0 = 0$.
\end{lemma}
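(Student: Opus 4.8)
The plan is to deduce the claim directly from the convergence $\mathcal{I}_0(R)\to 6$ of \Cref{lem:limit} together with the monotonicity of $R\mapsto\mathcal{I}_0(R)$. First I would recall the structural facts about $\mathcal{I}_0$ that are used implicitly in the definition \eqref{eq:r_0}: for each fixed $\gamma\in(0,1)$, the function $R\mapsto\mathcal{I}_0(R)$ is continuous (it is an integral with continuously varying upper limit $R$) and strictly increasing (its polar integrand $\rho^2\mathcal{K}_\gamma(\rho)\sinh^2\rho$ is strictly positive), with $\mathcal{I}_0(0^+)=0$ since $\rho^2\mathcal{K}_\gamma(\rho)\sinh^2\rho\sim\rho^{1-2\gamma}$ is integrable near $0$. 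Hence $\mathcal{I}_0^{-1}$ is a continuous strictly increasing function defined on the range of $\mathcal{I}_0$, and in particular $\mathcal{I}_0^{-1}(\mathcal{I}_0(R)/2)$ makes sense because $0<\mathcal{I}_0(R)/2<\mathcal{I}_0(R)$.

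Next, fix an arbitrary $\varepsilon\in(0,R)$; since $\rho_0\in(0,1)$ is a fixed universal constant, it suffices to prove $\mathcal{I}_0^{-1}(\mathcal{I}_0(R)/2)<\varepsilon$ for all $\gamma$ close enough to $1$, whence $r_0<\rho_0\varepsilon\le\varepsilon$. By the strict monotonicity of $\mathcal{I}_0(\cdot)$ this inequality is equivalent to $\mathcal{I}_0(R)/2<\mathcal{I}_0(\varepsilon)$. I would then apply \Cref{lem:limit} with radius $R$ and with radius $\varepsilon$: both $\mathcal{I}_0(R)$ and $\mathcal{I}_0(\varepsilon)$ tend to $6$ as $\gamma\to1$, so $\mathcal{I}_0(R)/2\to3$ while $\mathcal{I}_0(\varepsilon)\to6>3$. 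Consequently there is $\gamma_1=\gamma_1(R,\varepsilon)\in(0,1)$ such that $\mathcal{I}_0(R)/2<\mathcal{I}_0(\varepsilon)$ for every $\gamma\in(\gamma_1,1)$, which gives the desired bound. Since $\varepsilon$ was arbitrary, $\lim_{\gamma\to1}r_0=0$.

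I do not expect any real obstacle in this argument — the statement is essentially a corollary of \Cref{lem:limit}, and the only point requiring care is keeping the radius $R$ and the test radius $\varepsilon$ inside the same monotone function $\mathcal{I}_0(\cdot)$ for a fixed $\gamma$ before passing to the limit in $\gamma$. If one prefers to bypass the explicit inversion, the same conclusion follows by contradiction: were $r_0\not\to0$, there would be a sequence $\gamma_k\to1$ and $\delta>0$ with $\mathcal{I}_0^{-1}(\mathcal{I}_0(R)/2)=r_0/\rho_0\ge\delta$, hence $\mathcal{I}_0(R)/2\ge\mathcal{I}_0(\delta)$; letting $k\to\infty$ and invoking \Cref{lem:limit} for both radii would force $3\ge6$, a contradiction.
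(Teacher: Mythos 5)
Your argument is correct and follows essentially the same route as the paper's proof: both reduce the claim to the convergence $\mathcal{I}_0\to 6$ from \Cref{lem:limit} together with monotonicity of $R\mapsto\mathcal{I}_0(R)$. Your direct version is marginally cleaner, because by fixing $\varepsilon$ in advance you only ever invoke \Cref{lem:limit} at the two fixed radii $R$ and $\varepsilon$, whereas the paper's subsequence argument evaluates $\mathcal{I}_{0,\gamma_k}$ at the varying radius $r_{0,k}/\rho_0$ and needs an extra (tacit) squeeze-by-monotonicity step to justify that this converges to $6$.
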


\begin{proof}
Suppose that $\lim_{s \to 1} r_0 \neq 0$. Then, since $\tilde{r} := \limsup_{s \to 1} r_0 \neq 0$, there is a sequence $s_k \to 1$ such that $r_{0, k} := \rho_0 \mathcal{I}_{0,\kappa, s_{k}}^{-1}(\mathcal{I}_{0,\kappa, s_{k}}(R)/2) \in (0, R)$ converges to $\tilde{r}$ as $k \to \infty$, where $\mathcal{I}_{0, \kappa, s_{k}}$ is the scale function $\mathcal{I}_{0, \kappa}$ with respect to $s_{k}$. We have
\begin{equation} \label{eq:r_0k}
\mathcal{I}_{0, \kappa, s_{k}}(r_{0, k}/\rho_0) = \mathcal{I}_{0, \kappa, s_{k}} (R)/2.
\end{equation}
By \Cref{prop:limit_zero} and continuity of $\mathcal{I}_0$, the left-hand side of \eqref{eq:r_0k} converges to $C$ as $k \to \infty$ whereas the right-hand side of \eqref{eq:r_0k} converges to $C/2$, which is a contradiction.
\end{proof}

\section{Discrete ABP-type estimates} \label{sec:ABP}

In this section, we provide the proof of \Cref{thm:ABP}. Throughout the section, $u$ is assumed to be a supersolution given in \Cref{thm:ABP}. On Riemannian manifolds, the distance squared function in construction of envelope was suggested by Cabr\'e in \cite{Cab97} and has been used by many in \cite{Kim04,KKL14,KL14,WZ13}. More precisely, for each $y \in B_R$, there is a unique paraboloid
\begin{equation*}
P_y(z) = c_y - \frac{1}{2R^2} d_\Hnk^2(z, y)
\end{equation*}
that touches $u$ from below, with a contact point $x \in B_{5R}$. The {\it envelope $\Gamma$ of $u$} is defined by
\begin{equation*}
\Gamma(z) = \sup_{y \in B_R} P_y(z),
\end{equation*}
and the {\it contact set} is given by
\begin{equation} \label{eq:contact_set}
\mathscr{C} = \lbrace x \in B_{5R}: u(x) = \Gamma(x) \rbrace.
\end{equation}

The first step toward to ABP-type estimates for nonlocal operators is to find a ring around a given contact point in which supersolution $u$ is quadratically close to the paraboloid in a large portion of the ring. In the Euclidean spaces \cite{CS09}, or more generally in Riemannian manifolds with nonnegative curvature \cite{KKL22a}, the standard dyadic rings $B_{2^{-k}r_0} \setminus B_{2^{-(k+1)}r_0}$ are used. However, these are not appropriate within the framework of hyperbolic spaces due to the lack of homogeneity of the volume of balls. We thus define $r_{k}$ recursively by
\begin{equation*}
\frac{|B_{r_k}|}{|B_{r_{k-1}}|} = 2^{-n}, \quad k=1,2,\cdots,
\end{equation*}
and a {\it hyperbolic dyadic ring} by $R_k = R_k(x) = B_{r_k}(x) \setminus B_{r_{k+1}}(x)$. Note that we have $|B_{r_k}| / |B_{r_{k-1}}| \leq (r_k/r_{k-1})^n$  from \Cref{lem:VD}, and hence $r_{k+1} \geq r_k/2$. By using the hyperbolic dyadic rings, we will prove a series of lemmas to deduce \Cref{thm:ABP}. For notational convenience, we shall write
\begin{equation*}
\tilde{f}_{\kappa}(x) := \Lambda \mathcal{H}_{\kappa}(7R) + \frac{R^{2}}{\mathcal{I}_{0, \kappa}(R)} f(x),
\end{equation*}
where we recall $\mathcal{H}_{\kappa}(t) = \sqrt{\kappa} t \coth(\sqrt{\kappa} t)$.

\begin{lemma} \label{lem:good_ring}
Let $u$ be a supersolution given in \Cref{thm:ABP}. Then, there exists a universal constant $C_0 > 0$, independent of $s$, $\kappa$, and $R$, such that for each $x \in \mathscr{C}$ and $M_0 > 0$, there is an integer $k \geq 0$ satisfying
\begin{equation} \label{eq:G_k}
|G_k| \leq \frac{C_0}{M_0} \tilde{f}_\kappa(x) |R_k|,
\end{equation}
where $G_k = R_k \cap \lbrace u > P_y + M_0 (r_k/R)^2 \rbrace$.
\end{lemma}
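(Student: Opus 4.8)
The plan is to argue by contradiction. Fix $x\in\mathscr{C}$ and $M_0>0$, suppose that $|G_k|>\tfrac{C_0}{M_0}F(x)|R_k|$ for \emph{every} $k\geq0$, and deduce that $Lu(x)>f(x)$ for all $L\in\mathcal{L}_0$, which contradicts $\mathcal{M}^-u(x)\leq f(x)$. One may assume $f\geq0$ (replace $f$ by $f_+$), so $F(x)\geq\Lambda\mathcal{H}(7R)>0$. Let $P_y$ be the paraboloid touching $u$ from below at $x$. From $u\geq0$ off $B_{5R}$ and $\inf_{B_{2R}}u\leq1$ one first records the standard facts that $P_y\leq u$ on all of $\Hn$ and that $u(x)=P_y(x)$ is bounded above by an absolute constant; since $u-P_y$ is $C^2$ near $x$ with a minimum there, $\nabla u(x)=\nabla P_y(x)=:p$. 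The first substantive step is a one-sided Taylor bound: for $x'\in B_R(x)$ the geodesic from $x$ to $x'$ stays in $B_{6R}\subset\{d_\Hn(\cdot,y)<7R\}$, so the semiconcavity of $d_\Hn^2(\cdot,y)/2$ from \Cref{lem:Hessian} gives
\[
u(x')-u(x)\ \geq\ \langle p,\exp_x^{-1}(x')\rangle-\tfrac{\mathcal{H}(7R)}{2R^2}\,d_\Hn(x,x')^2,
\]
with an extra $+M_0(r_k/R)^2$ when $x'\in G_k$; this is where the factor $\mathcal{H}(7R)$ enters, and it is why one truncates at radius $R$ around the contact point.

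Next I would estimate $Lu(x)$ from below by decomposing $\Hn=\big(\bigsqcup_{k\geq0}R_k\big)\sqcup\big(B_R(x)\setminus B_{r_0}(x)\big)\sqcup\big(\Hn\setminus B_R(x)\big)$. Each $R_k$ and the annulus $B_R(x)\setminus B_{r_0}(x)$ are invariant under the geodesic symmetry at $x$ — an isometry that flips $\exp_x^{-1}(x')$ and fixes $d_\Hn(x,\cdot)$ and $\mu_\Hn$ — so the linear term $\langle p,\exp_x^{-1}(x')\rangle$ has vanishing principal value against the radial kernel $\mathcal{K}$ over each of these sets. On the rings this leaves, after using $\lambda\mathcal{K}_\gamma\leq\mathcal{K}\leq\Lambda\mathcal{K}_\gamma$, the monotonicity of $\mathcal{K}_\gamma$, and $\int_{B_\rho(x)}d_\Hn^2\mathcal{K}_\gamma\,\d\mu_\Hn=\mathcal{I}_0(\rho)$, a bound whose positive part is $\gtrsim\tfrac{\lambda M_0}{R^2}\sum_k r_k^2\mathcal{K}_\gamma(r_k)|G_k|$ and whose error is $\tfrac{\Lambda\mathcal{H}(7R)}{2R^2}\sum_k\int_{R_k}d_\Hn^2\mathcal{K}_\gamma=\tfrac{\Lambda\mathcal{H}(7R)}{2R^2}\mathcal{I}_0(r_0)$. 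Here I would use $r_{k+1}\geq r_k/2$ (from \Cref{lem:VD}) together with a uniform kernel-doubling estimate $\mathcal{K}_\gamma(\rho/2)\leq a\mathcal{K}_\gamma(\rho)$ for $\rho\leq R$, $\gamma\in[\gamma_0,1)$ — read off from the explicit Bessel form in \Cref{thm:kernel} — to get $\sum_k r_k^2\mathcal{K}_\gamma(r_k)|R_k|\geq a^{-1}\mathcal{I}_0(r_0)$, and then the contradiction hypothesis $|G_k|>\tfrac{C_0}{M_0}F(x)|R_k|$, so that the $\bigsqcup R_k$-contribution is at least $\tfrac{\mathcal{I}_0(r_0)}{R^2}\big(\tfrac{\lambda C_0}{a}F(x)-\tfrac{\Lambda\mathcal{H}(7R)}{2}\big)$. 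On $B_R(x)\setminus B_{r_0}(x)$ the linear term again drops and $\int_{B_R(x)}d_\Hn^2\mathcal{K}_\gamma=\mathcal{I}_0(R)$, so that piece is $\geq-\tfrac{\Lambda\mathcal{H}(7R)}{2R^2}\mathcal{I}_0(R)$. On $\Hn\setminus B_R(x)$ I would use only $P_y\leq u$, $u\geq0$ off $B_{5R}$, and the bound on $u(x)$ to get $u(x')-u(x)\geq-C$ there, hence a contribution $\geq-C\Lambda\,\mathcal{I}_\infty(R)/R^2$.

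Collecting the three pieces gives, for every $L\in\mathcal{L}_0$,
\[
Lu(x)\ \geq\ \tfrac{\mathcal{I}_0(r_0)}{R^2}\Big(\tfrac{\lambda C_0}{a}F(x)-\tfrac{\Lambda\mathcal{H}(7R)}{2}\Big)-\tfrac{\Lambda\mathcal{H}(7R)}{2R^2}\mathcal{I}_0(R)-C\Lambda\tfrac{\mathcal{I}_\infty(R)}{R^2}.
\]
To close, I would use $\mathcal{I}_0(r_0)\geq\rho_0^{\,2-\gamma}\mathcal{I}_0(R)/2\geq\rho_0^2\mathcal{I}_0(R)/2$ (from \Cref{lem:decreasing}); $\mathcal{I}_\infty(R)\leq\tfrac{1-\gamma}{\gamma}\mathcal{H}(R)\mathcal{I}_0(R)\leq\tfrac{1-\gamma_0}{\gamma_0}\mathcal{H}(R)\mathcal{I}_0(R)$ (from \Cref{lem:I_infty_I_0}); and $f(x)=\tfrac{\mathcal{I}_0(R)}{R^2}\big(F(x)-\Lambda\mathcal{H}(7R)\big)<\tfrac{\mathcal{I}_0(R)}{R^2}F(x)$. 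Substituting $F(x)\geq\Lambda\mathcal{H}(7R)$ where convenient, a choice of $C_0$ depending only on $n,\lambda,\Lambda,\gamma_0$ (through $a$ and $\rho_0$) forces $Lu(x)>f(x)$, the desired contradiction.

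The step I expect to be the main obstacle is exactly this final comparison and its $\gamma$-uniformity. The quantity $\mathcal{I}_0(r_0)$ is simultaneously the gain on the good rings and the coefficient of a curvature error, so the gain must win, which is why the monotonicity of $\mathcal{I}_0(R)/R^{2-\gamma}$ and the lower bound $\mathcal{I}_0(r_0)\gtrsim\rho_0^2\mathcal{I}_0(R)$ are indispensable; and the two ``outer'' pieces must be controlled uniformly in $\gamma\in[\gamma_0,1)$, which is precisely where the exponential decay of $\mathcal{K}_\gamma$ — making $\mathcal{I}_\infty(R)$ small with $\mathcal{I}_\infty(R)\to0$ (\Cref{lem:limit}) — and the relation $\mathcal{I}_\infty\leq\tfrac{1-\gamma}{\gamma}\mathcal{H}\,\mathcal{I}_0$ are used. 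These are exactly the features absent on nonnegatively curved manifolds, and obtaining the kernel-doubling constant $a$ uniform in $\gamma$ up to $1$ from the Bessel asymptotics is the other technical point to check.
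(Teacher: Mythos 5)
Your proof is correct and follows essentially the same route as the paper's: argue by contradiction, exploit that $x$ is a global minimum of $u+\tfrac{1}{2R^2}d_\Hn^2(\cdot,y)$ so the symmetric second-order increment is nonnegative (equivalently, the linear term cancels under the geodesic flip), split into a near region (rings) and a far region, sum the ring contributions down to $\mathcal{I}_0(r_0)$, and close with $\mathcal{I}_0(r_0)\geq\rho_0^2\mathcal{I}_0(R)/2$ from \Cref{cor:decreasing} together with \Cref{lem:I_infty_I_0}. The one technical point you flag — a $\gamma$-uniform kernel-doubling bound $\mathcal{K}_\gamma(\rho/2)\leq a\,\mathcal{K}_\gamma(\rho)$ — the paper sidesteps entirely by shifting the summation index ($k\mapsto k+1$), after which only the exact identity $|R_k|=2^n|R_{k+1}|$, the volume-doubling inequality $r_{k+1}\geq r_k/2$, and the monotonicity of $\mathcal{K}_\gamma$ are needed to get $\sum_k r_k^2\mathcal{K}_\gamma(r_{k+1})|R_k|\geq\mathcal{I}_0(r_0)$, so no separate Bessel estimate is required.
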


\begin{proof}
Since $x$ minimizes the function $u + \frac{1}{2R^2} d_{\Hnk}^2(\cdot, y)$, we have $\mathcal{M}^- u(x) \geq I_1+I_2+I_3$, where
\begin{align*}
I_1 &= \lambda \int_{B_R(x) \cup B_{5R}} \delta\left( u+\frac{1}{2R^2} d_{\Hnk}^2(\cdot, y), x,z \right) \mathcal{K}_{n, s, \kappa}(d_{\Hnk}(z,x)) \, \d\mu_{\Hnk}(z), \\
I_2 &= - \Lambda \int_{B_R(x) \cup B_{5R}} \delta^+ \left( \frac{1}{2R^2}d_{\Hnk}^2(\cdot, y), x, z \right) \mathcal{K}_{n, s, \kappa}(d_\Hnk(z, x)) \, \d \mu_{\Hnk}(z), \\
I_3 &= - \Lambda \int_{\Hnk \setminus (B_R(x) \cup B_{5R})} \delta^-(u,x,z) \mathcal{K}_{n, s, \kappa} (d_\Hnk(z, x)) \, \d \mu_{\Hnk}(z),
\end{align*}
and $\delta(v, x, z) = (v(z)+v(\exp_x(-\exp_x^{-1}z)) - 2v(x))/2$ is the second order incremental quotients. By the mean value theorem for integrals and \Cref{lem:Hessian}, we obtain
\begin{equation} \label{eq:I_2_ring}
I_2 \geq - C \Lambda \mathcal{H}_\kappa(7R) \frac{\mathcal{I}_{0,\kappa}(R)}{R^2}.
\end{equation}
Since $u(x) \leq u(x) +\frac{1}{2R^2} d_{\Hnk}^2(x,y) \leq \inf_{B_{2R}} (u+\frac{1}{2R^2} d_{\Hnk}^2(x,y) \leq 11/2 < 6$ and $u \geq 0$ in $\Hnk \setminus B_{5R}$, we also have
\begin{equation} \label{eq:I_3_ring}
I_3 \geq -C \Lambda \frac{\mathcal{I}_{\infty,\kappa}(R)}{R^2}.
\end{equation}

Let us now focus on $I_1$. Assume that \eqref{eq:G_k} does not hold for all $k$. Then, since
\begin{equation*}
\delta\left(u+\frac{1}{2R^2} d_{\Hnk}^2(\cdot, y),x,z\right) \geq M_0 \left(\frac{r_k}{R}\right)^2 \quad\text{on} ~ G_k
\end{equation*}
and $\mathcal{K}_{s, \kappa}$ is decreasing by \Cref{prop:kernel-decreasing}, we have
\begin{equation*}
I_1 \geq \lambda M_0 \sum_{k=1}^{\infty} \int_{G_k} \left( \frac{r_k}{R} \right)^2 \mathcal{K}_{n, s, \kappa}(d_{\Hnk}(z, x)) \, \d\mu_{\Hnk}(z) \geq \lambda C_0 \frac{\tilde{f}(x)}{R^2} \sum_{k=1}^{\infty} r_k^2 \mathcal{K}_{n, s, \kappa}(r_k) |R_k|.
\end{equation*}
Since $r_{k+1} \geq r_k/2$ and $|R_k| = 2^n|R_{k+1}|$, we obtain
\begin{align*}
I_1 
&\geq \lambda C_0 \frac{\tilde{f}(x)}{R^2} \sum_{k=0}^\infty r_{k+1}^2 \mathcal{K}_{n, s, \kappa}(r_{k+1}) |R_{k+1}| \\
&\geq \lambda 2^{-2-n} C_0 \frac{\tilde{f}(x)}{R^2} \sum_{k=0}^\infty r_k^2 \mathcal{K}_{n, s, \kappa}(r_{k+1}) |R_k| \\
&\geq \lambda 2^{-2-n} C_0 \frac{\tilde{f}(x)}{R^2} \sum_{k=0}^\infty \int_{R_k} d_{\Hnk}^2(z,x) \mathcal{K}_{n, s, \kappa}(d_\Hnk(z,x)) \, \d \mu_{\Hnk}(z) \\
&= \lambda 2^{-2-n} C_0 \frac{\tilde{f}(x)}{R^2} \mathcal{I}_{0, \kappa}(r_0).
\end{align*}
Furthermore, by using \Cref{prop:monotonicity} we have
\begin{equation} \label{eq:I_1_ring}
I_1 \geq \lambda 2^{-2-n} C_0 \frac{\tilde{f}(x)}{R^2} \rho_0^2 \mathcal{I}_{0, \kappa}(r_0/\rho_0) = \lambda 2^{-3-n} C_0 \frac{\tilde{f}(x)}{R^2} \rho_0^2 \mathcal{I}_{0,\kappa}(R).
\end{equation}
By combining \eqref{eq:I_2_ring}, \eqref{eq:I_3_ring}, and \eqref{eq:I_1_ring}, and then using \eqref{eq:r_0} and \Cref{prop:I_infty_I_0}, we obtain
\begin{equation*}
f(x) \geq \lambda 2^{-3-n} C_0 \frac{\tilde{f}(x)}{R^2} \rho_0^2 \mathcal{I}_{0,\kappa}(R) - C\left( 1+s_0^{-1} \right) \Lambda \mathcal{H}_\kappa(7R) \frac{\mathcal{I}_{0,\kappa}(R)}{R^2}.
\end{equation*}
By taking $C_0$ sufficiently large, we arrive at a contradiction.
\end{proof}

The next lemma shows that the function $\Gamma-P_y$ is $c$-convex with an appropriate function $c$. See \cite{GM96,MTW05} for the definition of $c$-convex function. The proof is exactly the same with that of \cite[Lemma 3.4]{KKL22a} except for the Hessian bound of the distance squared function. That is, we use \Cref{lem:Hessian} instead of \cite[Lemma 2.1]{KKL22a}.

\begin{lemma} \label{lem:convexity}
Let $x \in \mathscr{C}$, $z \in \Hnk$, and $y \in B_R$ be a vertex point of a paraboloid $P_y$. Then,
\begin{equation*}
(\Gamma - P_y)(z) \leq (1-t)(\Gamma - P_y) (z_1) + t (\Gamma - P_y)(z_2) + \frac{1}{2R^2} t(1-t) \mathcal{H}_\kappa( d_{\Hnk}(y,z)+|\xi| ) | \xi |^2
\end{equation*}
for all $t \in (0,1)$, where $z_1 = \exp_z(t \xi)$ and $z_2 = \exp_z((1-t)(-\xi))$.
\end{lemma}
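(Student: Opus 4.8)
The plan is to establish the $c$-convexity of $\Gamma - P_y$ by exploiting that $\Gamma$ is a supremum of paraboloids and that each such paraboloid, shifted by $-P_y$, satisfies the required inequality up to the Hessian error term coming from \Cref{lem:Hessian}. Since the statement is essentially \cite[Lemma 3.4]{KKL21} with the Hessian bound replaced by \Cref{lem:Hessian}, I would follow that argument closely, taking care that the curvature now appears through $\mathcal{H}(d_\Hn(y,z)+|\xi|)$ rather than a nonnegative-curvature bound.

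First I would write $\Gamma = \sup_{w \in B_R} P_w$, so that for the fixed vertex $y$ we have $\Gamma - P_y = \sup_{w \in B_R}(P_w - P_y)$; since a supremum of functions each satisfying a given two-point inequality again satisfies it, it suffices to prove, for every $w \in B_R$,
\begin{equation*}
(P_w - P_y)(z) \leq (1-t)(P_w - P_y)(z_1) + t(P_w - P_y)(z_2) + \frac{1}{2R^2} t(1-t) \mathcal{H}(d_\Hn(y,z)+|\xi|)|\xi|^2,
\end{equation*}
where $z_1 = \exp_z(t\xi)$ and $z_2 = \exp_z(-(1-t)\xi)$. Next I would unwind the definition $P_w(z) = c_w - \tfrac{1}{2R^2} d_\Hn^2(z,w)$, so that $(P_w - P_y)(z) = (c_w - c_y) - \tfrac{1}{2R^2}\big(d_\Hn^2(z,w) - d_\Hn^2(z,y)\big)$. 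The constants $c_w - c_y$ are affine along the reparametrized geodesic (they are independent of $z$), so they contribute equally to both sides and cancel; likewise the term $+\tfrac{1}{2R^2} d_\Hn^2(z,y)$ requires a lower bound for $d_\Hn^2(\cdot,y)/2$ along the geodesic segment through $z_1, z, z_2$, while $-\tfrac{1}{2R^2} d_\Hn^2(z,w)$ requires an upper bound for $d_\Hn^2(\cdot,w)/2$.

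For the upper bound on $d_\Hn^2(\cdot,w)/2$ I would apply \Cref{lem:Hessian} along the unit-speed geodesic $\sigma$ through $z$ in the direction $\xi/|\xi|$: by the second-order Taylor expansion with integral remainder, $\tfrac12 d_\Hn^2(\sigma(s),w)$ is a $C^2$ function of $s$ whose second derivative is bounded above by $\mathcal{H}(d_\Hn(\sigma(s),w))|\xi/|\xi||^2$, and since $\mathcal{H}$ is increasing while $d_\Hn(\sigma(s),w) \leq d_\Hn(z,w) + |\xi|$ for $s$ in the relevant interval of length $\leq |\xi|$, the error term is controlled by $\mathcal{H}(d_\Hn(z,w)+|\xi|)|\xi|^2$; a one-variable convexity-defect estimate (a function $g$ with $g'' \leq K$ satisfies $g(z) \geq (1-t)g(z_1) + t g(z_2) - \tfrac12 t(1-t)K|\xi|^2$) then yields the claim. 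For the lower bound on $d_\Hn^2(\cdot,y)/2$ I would use geodesic convexity of the squared distance in nonpositive curvature, so that $s \mapsto \tfrac12 d_\Hn^2(\sigma(s),y)$ is convex and contributes with the correct sign and no error term — equivalently, one may again invoke \Cref{lem:Hessian} only for the $w$-term and pure convexity for the $y$-term. The main obstacle is purely bookkeeping: ensuring the point $w$ at which the supremum is (nearly) attained has its vertex in $B_R$ so that $d_\Hn(z,w)$ can be absorbed into the stated bound $\mathcal{H}(d_\Hn(y,z)+|\xi|)$, and that all Taylor-remainder arguments are valid along the short geodesic segment; since the argument is structurally identical to \cite[Lemma 3.4]{KKL21}, I would simply cite that lemma and indicate that the only change is the substitution of \Cref{lem:Hessian} for its nonnegative-curvature analogue.
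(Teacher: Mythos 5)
Your route---writing $\Gamma - P_y = \sup_{w \in B_R}(P_w - P_y)$, reducing to the two-point inequality for each $P_w - P_y$, applying \Cref{lem:Hessian} to the $d_\Hn^2(\cdot,w)$ term and geodesic convexity to the $d_\Hn^2(\cdot,y)$ term---is the natural one and is what the paper intends when it says the proof of \cite[Lemma~3.4]{KKL21} carries over with the new Hessian bound. Your one-dimensional convexity-defect estimate and the bound $\max_u d_\Hn(\gamma(u),w)\le d_\Hn(z,w)+|\xi|$ are both correct.

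The ``bookkeeping'' step, however, does not close as you describe it. The error you obtain for each $w$ is $\frac{1}{2R^2}t(1-t)\,\mathcal{H}(d_\Hn(z,w)+|\xi|)\,|\xi|^2$, and to pass from the family $\{P_w-P_y\}_{w}$ to the supremum $\Gamma - P_y$ you need an error that is uniform in $w$, i.e.\ $\sup_{w\in B_R}\mathcal{H}(d_\Hn(z,w)+|\xi|)$. Since $\mathcal{H}$ is strictly increasing and the only a priori information is $w,y\in B_R$, the triangle inequality gives $d_\Hn(z,w)\le d_\Hn(z,y)+2R$ and nothing better, so what your argument actually proves is the inequality with error constant $\mathcal{H}(d_\Hn(y,z)+2R+|\xi|)$. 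That is strictly weaker than the stated lemma, and the extra $2R$ cannot simply be ``absorbed.'' This discrepancy is harmless for the sole place the lemma is invoked, namely \eqref{eq:t_i} in the proof of \Cref{lem:gradient_map}, where $z$, the vertex $y$, the competing vertices $w$, and the short geodesic all lie in a ball of radius comparable to $R$, so the error is coarsely bounded by a universal $\mathcal{H}(CR)$ in any case; but you should record what the argument actually yields (an error controlled by $\sup_{w\in B_R}\mathcal{H}(d_\Hn(z,w)+|\xi|)$) rather than assert absorption into $\mathcal{H}(d_\Hn(y,z)+|\xi|)$.
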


Using \Cref{lem:convexity}, we show that the envelope is captured in a small ball near a contact point by two paraboloids that are quadratically close to each other.

\begin{lemma} \label{lem:flatness}
Under the setting of \Cref{lem:good_ring}, there is a universal constant $\varepsilon_0 \in (0,1)$ such that if
\begin{equation*}
|\lbrace z \in R_k : \Gamma(z) > P_y(z) + h \rbrace| \leq \varepsilon_0 |R_k|,
\end{equation*}
then
\begin{equation*}
\Gamma \leq P_y + h + C \mathcal{H}_\kappa(7R) \left( \frac{r_k}{R} \right)^2
\end{equation*}
in $B_{\tilde{r}_{k+1}}(x)$, where $\tilde{r}_{k+1} = \frac{1}{\sqrt{\kappa}} \tanh^{-1}(\frac{1}{2}\tanh (\sqrt{\kappa} r_{k+1}))$.
\end{lemma}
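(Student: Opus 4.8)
The plan is to argue by contradiction using the hyperbolic dyadic ring structure, essentially transplanting the Euclidean argument of Caffarelli--Silvestre into the hyperbolic framework, with the Hessian bound \Cref{lem:Hessian} in place of flat-space estimates. Suppose the conclusion fails: then there is a point $z_0 \in B_{\tilde r_{k+1}}(x)$ with $\Gamma(z_0) > P_y(z_0) + h + C\mathcal{H}(7R)(r_k/R)^2$ for a constant $C$ to be chosen large. The key geometric fact I would exploit is that $\tilde r_{k+1} = \tanh^{-1}(\tfrac12 \tanh r_{k+1})$ is chosen precisely so that, for every $z \in R_k = B_{r_k}(x)\setminus B_{r_{k+1}}(x)$, the geodesic through $z_0$ and $z$ can be extended past $z$ to a point $z'$ that still lies in $B_{r_k}(x)$, with the midpoint-type parameter $t$ in \Cref{lem:convexity} bounded below by a universal constant. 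This is the hyperbolic analogue of the Euclidean statement that a segment from the center of a ball to a point in the annulus $B_r\setminus B_{r/2}$ can be continued a definite fraction of its length without leaving $B_r$; the factor $\tfrac12$ inside $\tanh^{-1}$ is dictated by the addition formula for $\tanh$ of a sum of geodesic distances.

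Concretely, for each $z \in R_k$ let $\xi = -\exp_z^{-1}(z_0) \in T_z\Hn$, so that $z_0 = \exp_z(-\xi)$ with $|\xi| = d_\Hn(z,z_0) \le r_k + \tilde r_{k+1} \le 2r_k$ (say), and let $z' = \exp_z(t\xi)$ for a suitable $t \in (0,1)$; by the choice of $\tilde r_{k+1}$ one can arrange $z' \in B_{r_k}(x)$ and $t \ge t_0$ for a universal $t_0 \in (0,1)$. Apply \Cref{lem:convexity} with this $z$, $z_1 = \exp_z(t\xi) = z'$, $z_2 = \exp_z((1-t)(-\xi)) = z_0$ (after relabeling the roles so that $z$ is the convex-combination point): it gives
\begin{equation*}
(\Gamma - P_y)(z) \ge (1-t)(\Gamma - P_y)(z') + t(\Gamma - P_y)(z_0) - \frac{1}{2R^2} t(1-t) \mathcal{H}(d_\Hn(y,z)+|\xi|)|\xi|^2.
\end{equation*}
Since $\Gamma \ge P_y$ everywhere and $d_\Hn(y,z)+|\xi| \le 7R$ for $z \in B_{5R}$ and $y \in B_R$ (using $r_k < R$), the error term is at least $-C\mathcal{H}(7R)(r_k/R)^2$; combining this with the assumed strict inequality at $z_0$ and $t \ge t_0$, we get $(\Gamma - P_y)(z) > h$ for \emph{every} $z \in R_k$, provided the constant $C$ in the statement is chosen to dominate the error constant. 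Hence $\lbrace z \in R_k : \Gamma(z) > P_y(z) + h\rbrace = R_k$, so its measure equals $|R_k|$, contradicting the hypothesis that it is at most $\varepsilon_0|R_k| < |R_k|$ (any $\varepsilon_0 < 1$ suffices, but we keep $\varepsilon_0$ as in \Cref{lem:flatness} for later use).

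The main obstacle is the geometric bookkeeping in the previous paragraph: verifying that the choice $\tilde r_{k+1} = \tanh^{-1}(\tfrac12\tanh r_{k+1})$ really guarantees a uniform lower bound on the extension parameter $t$ for \emph{all} directions and all $z \in R_k$, and controlling $|\xi| = d_\Hn(z,z_0)$ together with $d_\Hn(y,z)+|\xi|$ so that the Hessian factor $\mathcal{H}$ is evaluated at an argument bounded by $7R$. This requires the hyperbolic law of cosines / the $\tanh$-addition identity rather than the triangle inequality alone, since in negative curvature geodesic extension "overshoots" relative to the Euclidean picture; the function $\mathcal{T}$ and the relation between $r_{k+1}$ and $\tilde r_{k+1}$ encode exactly this overshoot. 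Once that lemma of pure hyperbolic geometry is in hand, the convexity inequality \Cref{lem:convexity} and the trivial bound $\Gamma \ge P_y$ finish the argument immediately.
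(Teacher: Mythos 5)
Your proposal does not close, and the gap is structural rather than a matter of bookkeeping. \Cref{lem:convexity} is a $c$-convexity statement: it gives an \emph{upper} bound on $(\Gamma - P_y)$ at the \emph{interior} point $z$ of a geodesic segment in terms of the values at the two endpoints $z_1, z_2$, i.e.\
$(\Gamma-P_y)(z) \le (1-t)(\Gamma-P_y)(z_1) + t(\Gamma-P_y)(z_2) + \text{error}$.
You have written the inequality with ``$\ge$'', which reverses the sign of the lemma; with the correct sign the inequality controls $(\Gamma-P_y)(z)$ from above by $(\Gamma-P_y)(z')$, a quantity for which you have no upper bound, so nothing follows. Moreover your parametrization is inconsistent: with $z_2 = \exp_z((1-t)(-\xi))$ one cannot have $z_2 = z_0 = \exp_z(-\xi)$ unless $t=0$. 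More fundamentally, your plan is to propagate ``largeness'' of $\Gamma - P_y$ from $z_0$ to every point of $R_k$; convexity does not do that, since the estimate only travels from the endpoints inward. Even the naive endpoint rearrangement $(\Gamma-P_y)(z) \ge \frac1{1-t}\bigl((\Gamma-P_y)(z_0) - t(\Gamma-P_y)(z') - \text{error}\bigr)$ requires an \emph{upper} bound on $(\Gamma-P_y)(z')$, which the trivial inequality $\Gamma \ge P_y$ does not supply. Finally, your remark that ``any $\varepsilon_0<1$ suffices'' is a tell-tale sign that something is off: the lemma cannot hold for $\varepsilon_0$ close to $1$, and the paper's argument pins down $\varepsilon_0 = 1/8$.

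What the actual proof does is logically opposite: it uses the smallness of the bad set to produce, for each $z \in B_{\tilde r_{k+1}}(x)$, \emph{two} points $w_1, w_2 \in D := \{\Gamma \le P_y + h\} \cap R_k$ that are collinear with $z$ in the sense $\varphi_z(w_1)=w_2$, and then applies \Cref{lem:convexity} at the middle point $z$ with both endpoints in $D$, so that the two endpoint values are each $\le h$ and the upper bound on $(\Gamma-P_y)(z)$ follows. Producing such a pair requires the measure-theoretic machinery your proposal omits entirely: the reflection-through-$z$ map $\varphi_z : R_k \to R_k$, the decomposition of $R_k$ by the pencil of geodesics through $z$ perpendicular to $xz$ into a small piece $A_1$ and a large piece $A_2$ with $\varphi_z(A_1) \subset A_2$, the monotonicity $|E| \le |\varphi_z(E)|$ for $E \subset A_1$ (where the negative curvature helps, since $\varphi_z$ stretches volume in the outward direction), and the comparison $|R_k| \le 4|A_1|$ computed in the Poincar\'e ball model. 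It is precisely this last comparison that dictates both the value $\varepsilon_0 = 1/8$ and the choice $\tilde r_{k+1} = \tanh^{-1}(\tfrac12 \tanh r_{k+1})$ (so that the perpendicular geodesics through $z$ cut $\partial B_{r_{k+1}}(x)$ at angular coordinate exactly $\pi/3$, giving $|A_1| \ge \tfrac14 |R_k|$). None of this appears in your proposal, and without it there is no way to extract a flanking pair in $D$.
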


\begin{proof}
Let us fix $z \in B_{\tilde{r}_{k+1}}(x)$ and set $D = \lbrace z \in R_k: \Gamma(z) \leq P_y(z) + h \rbrace$. For $w \in R_k$, let us consider a geodesic $c:\mathbb{R}\to \Hnk$ passing through $w$ and $z$. Then, $c(\mathbb{R}) \cap R_k$ consists of two connected components $c(t_1,t_2)$ and $c(t_3,t_4)$, where $t_1<t_2<t_3<t_4$ satisfy $t_4-t_3=t_2-t_1$. We may assume that $w=c(t) \in c(t_1,t_2)$. We define a map $\varphi_z:R_k \to R_k$ by $\varphi_z(w) = c\left( -t+t_1+t_4 \right)$, which is clearly one-to-one and onto.

Among all the geodesics passing through the point $z$, let us consider geodesics $c_\perp$ that are perpendicular to the geodesic joining $x$ and $z$. Then $\cup \, c_\perp$ divides $R_k$ into two regions: let $A_1$ be the smaller one and $A_2$ the bigger one. We claim
\begin{equation} \label{eq:claim_volume}
|E| \leq |\varphi_z(E)| \quad\text{for any Borel set}~ E \subset A_1.
\end{equation}
Indeed, we may assume that $z=0_{\kappa} \in \Hnk$ by using a global isometry. Then the map $\varphi:=\varphi_z$ can be represented by
\begin{equation*}
\varphi(w) = \frac{1}{\sqrt{\kappa}}(\cosh(r+C_\theta), \sinh(r+C_\theta)(-\theta)), \quad w = \frac{1}{\sqrt{\kappa}}(\cosh r, \sinh r \theta),
\end{equation*}
where $C_\theta = d_{\Hnk}(\varphi(w^\ast),0)-d_{\Hnk}(w^\ast,0)$, with $w^\ast$, the intersection point of $\partial B_{r_{k+1}}(x)$ and the geodesic segment joining 0 and $w$. Note that $\varphi$ is a smooth map because it is a composition of smooth maps. Clearly, $C_\theta \geq 0$ if and only if $w \in A_1$. Thus, we obtain
\begin{align*}
|E|
  &= \iint {\bf 1}_{E}(w) \frac{\sinh^{n-1} (\sqrt{\kappa}r)}{\sqrt{\kappa}^{n-1}} \, \d r \, \d \theta \\
  &\leq \iint {\bf 1}_{\varphi(E)}(\varphi(w)) \frac{\sinh^{n-1}(\sqrt{\kappa}(r+C_\theta))}{\sqrt{\kappa}^{n-1}} \, \d r \,\d\theta \\
  &= \iint {\bf 1}_{\varphi(E)} \left( \frac{\cosh \tilde{r}}{\sqrt{\kappa}}, \frac{\sinh\tilde{r}}{\sqrt{\kappa}} \tilde{\theta} \right) \frac{\sinh^{n-1}(\sqrt{\kappa} \tilde{r})}{\sqrt{\kappa}^{n-1}} \, \d \tilde{r} \, \d \tilde{\theta} = |\varphi(E)|,
\end{align*}
where we have used change of variables $\tilde{r} = r+C_\theta$ and $\tilde{\theta} = -\theta$. This proves \eqref{eq:claim_volume}.

We next claim that
\begin{equation} \label{eq:claim_ratio}
|R_k| \leq C|A_1| ~ \text{with}~ C > 0 ~\text{a universal constant.}
\end{equation}
Let us first deduce the lemma assuming that \eqref{eq:claim_ratio} is true. If we show that $\varphi_z(A_1 \cap D) \cap D \neq \emptyset$, then there are points $w_i \in A_i \cap D$, $i=1,2$, such that $\varphi_z(w_1) = w_2$. Since $\Gamma(w_i) \leq P_y(w_i) + h$, for $i = 1, 2$, the desired result follows from \Cref{lem:convexity}. Assume to the contrary that the set $\varphi(A_1 \cap D) \cap D$ is empty. By \eqref{eq:claim_ratio}, we have
\begin{equation*}
|A_1 \cap D^c| \leq |R_k \cap D^c| \leq \varepsilon_0 |R_k| \leq C\varepsilon_0 |A_1|.
\end{equation*}
By taking $\varepsilon_0=(2C)^{-1}$, we obtain $|A_1 \cap D| > |A_1|/2$. Since $\varphi_z(A_1 \cap D) \subset A_2 \cap D^c$, it follows that
\begin{equation*}
\frac{1}{2} |A_1| < |A_1 \cap D| \leq |\varphi_z(A_1\cap D)| \leq |A_2 \cap D^c| \leq |R_k \cap D^c| \leq \frac{1}{2} |A_1|,
\end{equation*}
which is a contradiction.

From now on, we focus on the proof of \eqref{eq:claim_ratio}. To this end, it is convenient to use the Poincar\'e ball model $\mathbb{B}^n_{\kappa} = \mathbb{B}^{n}_{1, \kappa}$. Let $\tilde{A}_1 = \phi(A_1)$ and $\tilde{R}_k = \phi(R_k)$, where $\phi$ is the isometry given by \eqref{eq:isometry}. Since we are concerned with volumes, we may assume $\phi(z) = |\phi(z)|e_1$ so that $\tilde{A}_1$ is rotationally symmetric with respect to $x_1$-axis. Let $\rho_k$ be such that $r_k= d_{\mathbb{B}^{n}_{\kappa}} (0,\rho_k e_1)$. We observe that
\begin{equation} \label{eq:annulus_sector}
\lbrace y \in \mathbb{B}^n_{\kappa}: \rho_{k+1} < |y| < \rho_k,~ e_1 \cdot y/|y| > 1/2 \rbrace \subset \tilde{A}_1.
\end{equation}
Indeed, if we define $\tilde{A}_1'$ in the same way as $\tilde{A}_1$ with $z' \in \partial B_{\tilde{r}_{k+1}}(x)$ instead of $z \in B_{\tilde{r}_{k+1}}(x)$, then $\tilde{A}_1 \supset \tilde{A}_1'$. Moreover, any geodesic that is perpendicular to $x_1$-axis and passes through $\tilde{\rho}_{k+1}e_1$ is contained in the sphere
\begin{equation} \label{eq:sphere}
\bigg( x_1- \frac{1+\tilde{\rho}_{k+1}^2}{2\tilde{\rho}_{k+1}} \bigg)^2 + x_2^2+x_3^2+ ...+ x_n^2 = \bigg( \frac{1-\tilde{\rho}_{k+1}^2}{2\tilde{\rho}_{k+1}} \bigg)^2.
\end{equation}
The $x_1$-coordinate of the intersection of the spheres \eqref{eq:sphere} and $x_1^2+x_2^2+...+x_n^2=\rho_{k+1}^2$ is given by
\begin{equation*}
x_1=\frac{\tilde{\rho}_{k+1}}{1+\tilde{\rho}_{k+1}^2}(1+\rho_{k+1}^2) = \frac{\tanh (\sqrt{\kappa}\tilde{r}_{k+1})}{\tanh (\sqrt{\kappa} r_{k+1})} \rho_{k+1} = \frac{1}{2} \rho_{k+1},
\end{equation*}
where we used
\begin{equation*}
  d_{\Hnk}(0, \rho e_1) = \frac{1}{\sqrt{\kappa}}\cosh^{-1} \frac{1+\rho^2}{1-\rho^2} =\frac{1}{\sqrt{\kappa}} \tanh^{-1} \frac{2\rho}{1+\rho^2}
\end{equation*}
in the second equality. Note that the radius $\tilde{r}_{k+1} = \frac{1}{\sqrt{\kappa}} \tanh^{-1} (\frac{1}{2} \tanh (\sqrt{\kappa}r_{k+1}))$ is chosen so that the last equality holds. Therefore, \eqref{eq:annulus_sector} holds.

We now compute
\begin{align*}
  |A_1| & = |\tilde{A}_1| \\
        & \geq \int_0^{2\pi} \int_{\rho_{k+1}}^{\rho_k} \int_0^{\frac{\pi}{3}} \cdots \int_0^{\frac{\pi}{3}}\left( \frac{2}{\sqrt{\kappa}(1-\rho^2)} \right)^{n} \rho^{n-1} \sin^{n-2} \varphi_1   \cdots \sin \varphi_{n-2}      \, \d \varphi_1 \cdots \d  \varphi_{n-2} \, \d \rho \, \d \theta \\
        & \geq C(n) \int_{\rho_{k+1}}^{\rho_k} \left( \frac{2}{\sqrt{\kappa}(1-\rho^2)} \right)^{n} \rho^2 \, \d\rho.
\end{align*}
Since
\begin{equation*}
|R_k| = |\tilde{R}_k| = |S^{n-1}|\int_{\rho_{k+1}}^{\rho_k} \left( \frac{2}{\sqrt{\kappa}(1-\rho^2)} \right)^{n} \rho^{n-1} \, \d \rho,
\end{equation*}
\eqref{eq:claim_ratio} is proved with some $C(n)>0$.
\end{proof}

We define $\phi: \Hnk \to B_R$ by a map assigning each point $x \in \Hnk$ a vertex point $y$ of the paraboloid $P_y$, where $P_y$ is a paraboloid such that $\Gamma(x) = P_y(x)$, which is not necessarily unique. Then, the flatness of $\Gamma$ obtained in \Cref{lem:flatness} allows us to control the image of $\phi$, which can be understood as the image of gradient of $\Gamma$.

\begin{lemma} \label{lem:gradient_map}
Under the setting of \Cref{lem:good_ring}, let $x \in \mathscr{C}$ and let $k$ be such that \eqref{eq:G_k} holds, and let $\varepsilon_0$ be the constant in \Cref{lem:flatness}. Then,
\begin{equation} \label{eq:ring_eps}
\left| \left\lbrace z \in R_k: u(z) > P_y(z) + C\tilde{f}_\kappa(x) (r_k/R)^2 \right\rbrace \right| \leq \varepsilon_0 |R_k|
\end{equation}
and
\begin{equation} \label{eq:gradient_map}
\phi \left( \overline{B(x, \tilde{r}_{k+1}/2)} \right) \subset B\left( y, C \mathcal{S}_{\kappa}(7R)\mathcal{T}_{\kappa}(r_{k+1}) \tilde{f}_\kappa(x) r_k \right),
\end{equation}
where $C > 0$ is a universal constant depending only on $n$, $\lambda$, $\Lambda$, and $s_0$.
\end{lemma}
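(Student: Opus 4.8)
\emph{Proof plan.} The plan is to derive the two assertions in order. For \eqref{eq:ring_eps}, the idea is simply to feed the right $M_0$ into \Cref{lem:good_ring}: assuming (as we may) that $F(x)>0$, apply \Cref{lem:good_ring} with $M_0=\tfrac{C_0}{\varepsilon_0}F(x)$. This produces an integer $k\ge0$ for which $|G_k|\le\tfrac{C_0}{M_0}F(x)|R_k|=\varepsilon_0|R_k|$, and unwinding the definition of $G_k$ this is exactly \eqref{eq:ring_eps} with the universal constant $C=C_0/\varepsilon_0$. Since the envelope obeys $\Gamma\le u$ on $\Hn$ (from its construction together with $\inf_{B_{2R}}u\le1$ and $u\ge0$ outside $B_{5R}$), the same $k$ gives $\big|\{z\in R_k:\Gamma(z)>P_y(z)+CF(x)(r_k/R)^2\}\big|\le\varepsilon_0|R_k|$, so \Cref{lem:flatness} with $h=CF(x)(r_k/R)^2$ yields $\Gamma\le P_y+CF(x)(r_k/R)^2$ in $B_{\tilde r_{k+1}}(x)$; here the extra term $C\mathcal H(7R)(r_k/R)^2$ produced by \Cref{lem:flatness} is absorbed into $CF(x)(r_k/R)^2$ using $\mathcal H(7R)\lesssim F(x)$ (e.g.\ $F(x)\ge\Lambda\mathcal H(7R)$ after replacing $f$ by $f_+$), at the cost of a $\lambda,\Lambda$-dependent constant. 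Write $\tilde h:=CF(x)(r_k/R)^2$ for this bound.

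For \eqref{eq:gradient_map}, fix $z\in\overline{B(x,\tilde r_{k+1}/2)}$ and put $y'=\phi(z)\in B_R$, so that $P_{y'}(z)=\Gamma(z)$ and $P_{y'}\le\Gamma$ on $\Hn$ by definition of the envelope. The plan is to show that the two paraboloids $P_{y'}$ and $P_y$, being $\tilde h$-close on $B_{\tilde r_{k+1}}(x)$ and having comparable values at the interior point $z$, must have nearby vertices. Set $g:=P_{y'}-P_y$. On $B_{\tilde r_{k+1}}(x)$ one has $g\le\Gamma-P_y\le\tilde h$, while $g(z)=\Gamma(z)-P_y(z)\ge0$ because $P_y\le\Gamma$. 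Moreover $g=\mathrm{const}-\tfrac1{2R^2}\big(d_\Hn^2(\cdot,y')-d_\Hn^2(\cdot,y)\big)$, so combining \Cref{lem:Hessian} with the standard lower bound $D^2\big(d_\Hn^2(\cdot,p)/2\big)\ge\mathrm{Id}$ valid in nonpositive curvature gives $\|D^2g\|\lesssim R^{-2}\mathcal H(7R)\lesssim R^{-2}F(x)$ on $B_{\tilde r_{k+1}}(x)$, all distances involved being at most $7R$. Taylor-expanding $g$ along the geodesics $s\mapsto\exp_z(s\nu)$, $|s|\le\tilde r_{k+1}/2$, $|\nu|=1$ (which stay inside $B_{\tilde r_{k+1}}(x)$), and using $g(z)\ge0$, $g\le\tilde h$ and the Hessian bound, gives $\tfrac{\tilde r_{k+1}}{2}|\nabla g(z)|\le\tilde h+CR^{-2}F(x)r_k^2\le CR^{-2}F(x)r_k^2$, whence $|\nabla g(z)|\le C\,R^{-2}\,\tilde r_{k+1}^{-1}F(x)r_k^2$. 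Since $\nabla_wP_p(w)=R^{-2}\exp_w^{-1}(p)$, this is precisely $|\exp_z^{-1}(y')-\exp_z^{-1}(y)|\le C\,\tilde r_{k+1}^{-1}F(x)r_k^2$.

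The final step is to convert this bound on the difference of logarithm maps at $z$ into a bound on $d_\Hn(y',y)$, and this is where the negative curvature genuinely enters: by the hyperbolic law of cosines (equivalently Toponogov's theorem with the explicit $\sinh$-comparison), for any $z,a,b\in\Hn$ one has $d_\Hn(a,b)\le\mathcal S\big(\max\{d_\Hn(z,a),d_\Hn(z,b)\}\big)\,\big|\exp_z^{-1}(a)-\exp_z^{-1}(b)\big|$. Applying this with $a=y'$, $b=y$, and noting $d_\Hn(z,y'),d_\Hn(z,y)\le7R$ (as $y,y'\in B_R$ and $z\in B_{6R}$), we get $d_\Hn(y',y)\le C\mathcal S(7R)\,\tilde r_{k+1}^{-1}F(x)r_k^2$. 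Finally $r_{k+1}\ge r_k/2$ from \Cref{lem:VD} and $r_{k+1}/\tilde r_{k+1}=\mathcal T(r_{k+1})$ give $r_k^2/\tilde r_{k+1}\le2\mathcal T(r_{k+1})r_k$, so $\phi(z)=y'\in B\big(y,C\mathcal S(7R)\mathcal T(r_{k+1})F(x)r_k\big)$; since $z\in\overline{B(x,\tilde r_{k+1}/2)}$ was arbitrary, \eqref{eq:gradient_map} follows.

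I expect the main obstacle to be this last paragraph together with the Hessian step: one must identify the exact geometric distortion ($\mathcal S(7R)$ in the law-of-cosines comparison, $\mathcal H(7R)$ in the Hessian comparison) and verify it contributes only the factor $F(x)$ and nothing worse, and — a nuisance rather than a real difficulty — keep careful track of which ball each quantity lives in so that all distances are controlled by $7R$. Everything else is bookkeeping with the hyperbolic dyadic radii $r_k,\tilde r_{k+1}$ and the constants supplied by \Cref{lem:good_ring} and \Cref{lem:flatness}.
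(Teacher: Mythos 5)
Your proof of \eqref{eq:ring_eps} and the reduction via \Cref{lem:flatness} track the paper exactly, and the final conversion from $|\exp_z^{-1}y'-\exp_z^{-1}y|$ to $d_\Hn(y',y)$ by Jacobi-field/Toponogov comparison (giving the $\mathcal S(7R)$ factor) together with the Gauss lemma is also the same as the paper's. The genuine difference is in how you control $R^2|\nabla P_{y'}(z)-\nabla P_y(z)|$. The paper argues by contradiction with $h(t)=(\Gamma-P_y)(c(t))$: it feeds the $c$-semiconvexity of the envelope (\Cref{lem:convexity}) into a discrete telescoping estimate of difference quotients along a geodesic reaching $\partial B_{3\tilde r_{k+1}/4}(x)$, and shows that a large directional derivative at $z$ would force $\Gamma-P_y$ to exceed the flatness bound at the endpoint. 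You instead set $g=P_{y'}-P_y$, observe that $g$ is smooth, that $g(z)=\Gamma(z)-P_y(z)\ge0$ and $g\le\Gamma-P_y\le\tilde h$ on $B_{\tilde r_{k+1}}(x)$, and that $\|D^2g\|\lesssim\mathcal H(7R)/R^2$ by combining \Cref{lem:Hessian} with the lower Hessian comparison $D^2(d_\Hn^2/2)\ge\mathrm{Id}$ (exact in hyperbolic space, where $D^2(d^2/2)=\mathcal H(d)(g-\mathrm d r\otimes\mathrm d r)+\mathrm d r\otimes\mathrm d r$); a one-variable Taylor expansion along geodesics of length $\tilde r_{k+1}/2$ then delivers the gradient bound directly. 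This is a cleaner route for this step---it trades the paper's contradiction scheme and the direct invocation of \Cref{lem:convexity} for the two-sided Hessian comparison applied to the smooth paraboloid difference rather than to the merely semiconvex envelope. Both approaches share the same small gap: absorbing the $C\mathcal H(7R)(r_k/R)^2$ term from \Cref{lem:flatness} into $CF(x)(r_k/R)^2$ requires $\mathcal H(7R)\lesssim F(x)$, which you correctly flag (handle it by replacing $f$ with $f_+$, or note that the case $F(x)\le0$ is vacuous in \Cref{lem:good_ring}). One bookkeeping item worth being explicit about: the $\mathcal S$ factor in your Toponogov step should really be $\mathcal S$ of the sup over the whole tangent segment, not just the two endpoints, but since $|\exp_z^{-1}y'|,|\exp_z^{-1}y|\le7R$ and the Euclidean ball in $T_z\Hn$ is convex, this sup is still $\le7R$, so the estimate stands.
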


\begin{proof}
By taking $M_0 = C_0 \tilde{f}_\kappa(x)/\varepsilon_0$ in \Cref{lem:good_ring}, we obtain \eqref{eq:ring_eps}. Moreover, by \Cref{lem:flatness} we have
\begin{equation} \label{eq:flatness}
P_y \leq \Gamma \leq P_y + C \tilde{f}(x) \left( \frac{r_k}{R} \right)^2
\end{equation}
in $B_{\tilde{r}_{k+1}}(x)$, with a universal constant $C > 0$.

To prove \eqref{eq:gradient_map}, let $z \in \overline{B(x,\tilde{r}_{k+1}/2)}$ and $y_\ast \in \phi(z)$. We need to find a upper bound of $d_{\Hnk}(y_\ast, y)$. Let $\xi_1 = \exp_z^{-1} y_\ast$ and $\xi_2 = \exp_z^{-1} y$. Let us consider a family of geodesics
\begin{equation*}
c(s,t) = \exp_z (t(\xi_1 + s(\xi_2-\xi_1))),
\end{equation*}
and the Jacobi field $J$ along $c$. Then, by \cite[Equation (1.8b)]{Gig12} (or see, e.g. \cite{Jos17}), we have
\begin{equation*}
|J(1)|_{g(y_\ast)} \leq \mathcal{S}_{\kappa}(|\xi_1|) |J'(0)|_{g(z)} \leq \mathcal{S}_{\kappa}(7R) |\xi_2 - \xi_1|_{g(z)}.
\end{equation*}
Considering the curve $s \mapsto c(s, 1)$, we obtain
\begin{equation*}
d_{\Hnk}(y_\ast, y) \leq \int_0^1 |\partial_s c(s, 1)|_{g(y_\ast)} \, \d s \leq \mathcal{S}_{\kappa}(7R) |\exp_z^{-1}y_\ast - \exp_z^{-1} y|_{g(z)}.
\end{equation*}
By the Gauss lemma, we know that $|\exp_z^{-1}y_\ast - \exp_z^{-1} y|_{g(z)} = R^2 |\nabla P_{y_\ast}(z) - \nabla P_y(z)|_{g(z)}$. Thus, it only remains to show that
\begin{equation} \label{eq:grad}
R^2 |\nabla P_{y_\ast} (z) - \nabla P_y (z)|_{g(z)} \leq C \mathcal{T}_{\kappa}(r_{k+1}) \tilde{f}(x) r_k
\end{equation}
for some universal constant $C > 0$.

To this end, we prove that
\begin{equation} \label{eq:directional_grad}
\left| \left. \frac{\d}{\d t} \right|_{t=0} \left( P_{y_\ast} - P_y \right)(c(t)) \right| \leq C \mathcal{T}_{\kappa}(r_{k+1}) \tilde{f}_\kappa(x) \frac{r_k}{R^2}
\end{equation}
for all geodesics $c$, with unit speed, starting from $c(0) = z$. Suppose that \eqref{eq:directional_grad} does not hold for some $c$. We may assume that
\begin{equation*}
C\mathcal{T}_{\kappa}(r_{k+1}) \tilde{f}_\kappa(x) \frac{r_k}{R^2} \leq \left. \frac{\d}{\d t} \right|_{t=0} \left( P_{y_\ast} - P_y \right)(c(t)),
\end{equation*}
by considering $\tilde{c}(t) = c(-t)$ instead of $c(t)$ if necessary. Let $\varepsilon > 0$, then there is a $\delta > 0$ such that if $| t | < \delta$, we have
\begin{equation} \label{eq:t}
C \mathcal{T}_{\kappa}(r_{k+1}) \tilde{f}_\kappa(x) \frac{r_k}{R^2} - \varepsilon \leq \frac{(P_{y_\ast} - P_y)(c(t)) - (P_{y_\ast} - P_y)(c(0))}{t} \leq \frac{h(t) - h(0)}{t},
\end{equation}
where $h(t) = (\Gamma - P_y)(c(t))$. Let $T > 0$ be the first time such that $c(T) \in \partial B_{3\tilde{r}_{k+1}/4}(x)$. Let $N$ be the least integer not smaller than $T/\delta$, and let $0 = t_0 < t_1 < \cdots < t_N = T$ be equally distributed times. Then, $t_{i+1} - t_i = T/N \leq \delta$. By \Cref{lem:convexity}, we have
\begin{equation} \label{eq:t_i}
\frac{h(t_i) - h(t_{i-1})}{t_i - t_{i-1}} \leq \frac{h(t_{i+1}) - h(t_i)}{t_{i+1} - t_i} + \frac{\mathcal{H}_\kappa(7R)}{2R^2} (t_{i+1} - t_{i-1}), \quad i = 1, 2, \cdots, N-1.
\end{equation}
Thus, it follows from \eqref{eq:t} and \eqref{eq:t_i} that
\begin{equation} \label{eq:i}
C \mathcal{T}_{\kappa}(r_{k+1}) \tilde{f}_\kappa(x) \frac{r_k}{R^2} - \varepsilon \leq \frac{h(t_{i+1}) - h(t_i)}{T/N} + \frac{\mathcal{H}_\kappa(7R)}{2R^2} \frac{2Ti}{N}, \quad i=1,2,\cdots, N-1.
\end{equation}
Summing up \eqref{eq:i} for $i=1,2,\cdots, N-1$, we obtain
\begin{equation*}
N \left( C \mathcal{T}_{\kappa}(r_{k+1}) \tilde{f}_\kappa(x) \frac{r_k}{R^2} - \varepsilon \right) \leq \frac{h(t_N) - h(t_0)}{T/N} + \frac{\mathcal{H}_\kappa(7R)}{2R^2} \frac{2T}{N} \frac{N(N-1)}{2}.
\end{equation*}
Since $c$ has a unit speed, we have $\tilde{r}_{k+1}/4 < T < \tilde{r}_{k+1}$, and hence
\begin{align*}
C \mathcal{T}_{\kappa}(r_{k+1}) \tilde{f}_\kappa(x) \frac{r_k}{R^2} - \varepsilon
&\leq \frac{(\Gamma - P_y)(c(T)) - (\Gamma - P_y)(z)}{\tilde{r}_{k+1}/4} + \frac{\mathcal{H}_\kappa(7R)}{2R^2} \tilde{r}_{k+1} \\
&\leq \frac{(\Gamma - P_y)(c(T))}{\tilde{r}_{k+1}/4} + \frac{\mathcal{H}_\kappa(7R)}{2R^2} \tilde{r}_{k+1}.
\end{align*}
Recalling that $\mathcal{T}_{\kappa}(r_{k+1}) = r_{k+1}/\tilde{r}_{k+1}$ and $r_{k+1} \geq r_k/2$, and that $\varepsilon$ was arbitrary, we have
\begin{equation} \label{eq:not_flat}
C\tilde{f}_\kappa(x) \frac{r_k^2}{8R^2} \leq (\Gamma - P_y) (c(T)) + \mathcal{H}_\kappa(7R) \frac{r_k^2}{8R^2}.
\end{equation}
Since $c(T) \in \partial B_{3\tilde{r}_{k+1}/4}(x) \subset B_{\tilde{r}_{k+1}}(x)$, the inequality \eqref{eq:not_flat} with a sufficiently large constant $C_1 > 0$ contradicts to \eqref{eq:flatness}. Therefore, we have proved \eqref{eq:grad}, which finishes the proof.
\end{proof}

We are now ready to prove a discrete ABP-type estimate, from which \Cref{thm:ABP} follows.

\begin{lemma} \label{lem:ABP_discrete}
Assume the same assumptions as in \Cref{thm:ABP}. There is a finite collection $\mathcal{D}$ of dyadic cubes $\lbrace Q^j_\alpha \rbrace$, with diameters $d_j \leq r_0$, such that the following hold:
\begin{enumerate}[(i)]
\item
Any two different dyadic cubes in $\mathcal{D}$ do not intersect.
\item
$\mathscr{C} \subset \bigcup \overline{Q}^j_\alpha$.
\item
$|\phi(\overline{Q}^j_\alpha)| \leq c F^{n} |Q_\alpha^j|$.
\item
$|B(z_\alpha^j, 2r_0) \cap \lbrace u \leq \Gamma + C(\sup_{\overline{Q}_\alpha^j} \tilde{f}(x)) (r_0/R)^2 \rbrace| \geq \mu |Q^j_\alpha|$.
\end{enumerate}
The constants $C > 0$ and $\mu > 0$ depend only on $n$, $\lambda$, $\Lambda$, and $s_0$.
\end{lemma}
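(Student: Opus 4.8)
I would run a Calder\'on--Zygmund stopping-time argument on the dyadic cubes of \Cref{thm:dyadic_cubes}. Fix the generation $j_\ast$ with $c_2\delta_0^{j_\ast}\le r_0$ and start from the (finitely many) cubes $Q^{j_\ast}_\alpha$ whose closure meets $\mathscr{C}$. Call a cube $Q^j_\alpha$ \emph{good} if there are a contact point $x\in\overline{Q^j_\alpha}\cap\mathscr{C}$ and an integer $k$ as in \Cref{lem:good_ring} (with $M_0=C_0 F(x)/\varepsilon_0$, so that \eqref{eq:ring_eps} holds) with $\overline{Q^j_\alpha}\subset \overline{B(x,\tilde r_{k+1}/2)}$; otherwise, if $Q^j_\alpha$ still meets $\mathscr{C}$, subdivide it into its \Cref{thm:dyadic_cubes}-children meeting $\mathscr{C}$ and repeat. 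Let $\mathcal{D}$ be the collection of good cubes so produced. Properties (i) and (ii) are built into \Cref{thm:dyadic_cubes}, and $\mathrm{diam}(Q^j_\alpha)\le c_2\delta_0^{j_\ast}\le r_0$ since subdivision only decreases diameters. Finiteness of $\mathcal{D}$ follows from compactness of $\mathscr{C}$ together with the fact that, for each $x\in\mathscr{C}$, \Cref{lem:good_ring} supplies a \emph{fixed} radius $\tilde r_{k(x)+1}(x)>0$ below which every cube whose closure contains $x$ is good; an infinite nested chain of bad cubes would shrink to a point of $\mathscr{C}$ and hence eventually become good, a contradiction.

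Next I would verify (iv) for a good cube $Q=Q^j_\alpha$ with associated $x$ and $k$. Since $r_k\le r_0$ and $\mathrm{diam}(Q)\le r_0$ one has $R_k(x)\subset B(z^j_\alpha,2r_0)$. On $R_k(x)$ the envelope obeys $\Gamma\ge P_y$, so on the set in \eqref{eq:ring_eps} we get $u\le P_y+CF(x)(r_k/R)^2\le \Gamma+C(\sup_{\overline Q}F)(r_0/R)^2$; that set, of measure $\ge(1-\varepsilon_0)|R_k(x)|$, thus lies in $B(z^j_\alpha,2r_0)\cap\{u\le\Gamma+C(\sup_{\overline Q}F)(r_0/R)^2\}$. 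It remains to compare $|R_k(x)|$ with $|Q|$: because the parent $\widehat Q$ is \emph{not} good, applying the definition with the same $x$ forces $\mathrm{diam}(\widehat Q)>\tilde r_{k+1}/2$, whence $\mathrm{diam}(Q)\gtrsim\delta_0\tilde r_{k+1}$ by \Cref{thm:dyadic_cubes}; together with $\mathrm{diam}(Q)\le\tilde r_{k+1}/2$ this yields $\mathrm{diam}(Q)\simeq\tilde r_{k+1}$. Then $|Q|\le|B_{C\,\mathrm{diam}(Q)}|\lesssim|B_{\tilde r_{k+1}}|\le|B_{r_k}|\simeq|R_k(x)|$ by \Cref{lem:VD}, so $(1-\varepsilon_0)|R_k(x)|\ge\mu|Q|$ for a small universal $\mu$, which is (iv).

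For (iii) — the step I expect to be the crux — I use the same good cube $Q$ with its $x$ and $k$, so that $\overline Q\subset\overline{B(x,\tilde r_{k+1}/2)}$. By \Cref{lem:gradient_map} and homogeneity of $\Hn$,
\[
|\phi(\overline Q)|\le|B_\rho|,\qquad \rho=C\mathcal{S}(7R)\,\mathcal{T}(r_{k+1})\,F(x)\,r_k .
\]
Using $\mathcal{T}(r_{k+1})\le\mathcal{T}(r_0)$, $r_k\le 2r_{k+1}=2\mathcal{T}(r_{k+1})\tilde r_{k+1}\lesssim\mathcal{T}(r_0)\,\mathrm{diam}(Q)$ (the bound $\tilde r_{k+1}\lesssim\mathrm{diam}(Q)$ again from the parent being bad), and $F(x)\le F:=\sup_{\overline Q}F$, one gets $\rho\lesssim\mathcal{T}(r_0)^2F\,\mathrm{diam}(Q)\lesssim\mathcal{T}(r_0)^2 F\,r_0$ — note the two factors $\mathcal{T}(r_0)$, one from bounding $\mathcal{T}(r_{k+1})$ in $\rho$ and one from converting the $r_{k+1}$-scale to the $\mathrm{diam}(Q)$-scale. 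On the other hand $|Q|\ge|B(z^j_\alpha,c_1\delta_0^j)|\ge|B_{c\,\mathrm{diam}(Q)}|$. If $\rho\le c\,\mathrm{diam}(Q)$ the estimate is trivial; otherwise \Cref{lem:VD} gives
\[
\frac{|\phi(\overline Q)|}{|Q|}\le\frac{|B_\rho|}{|B_{c\,\mathrm{diam}(Q)}|}\le\mathcal{D}\Big(\frac{\rho}{c\,\mathrm{diam}(Q)}\Big)^{\log_2\mathcal{D}},\qquad \mathcal{D}=2^n\cosh^{n-1}(2\rho).
\]
Since $\rho/\mathrm{diam}(Q)\lesssim\mathcal{T}(r_0)^2F$ and $2\rho\lesssim\mathcal{T}(r_0)^2 r_0 F$, writing $\log_2\mathcal{D}=n+(n-1)\log_2\cosh(2\rho)$ and splitting the power gives exactly the asserted bound, with the factor $(\mathcal{T}(r_0)^2F)^n$ and, from the exponential growth of hyperbolic volumes, the prefactor $\cosh^{n-1}(C\mathcal{T}(r_0)^2 r_0 F)$ and exponent $(n-1)\log\cosh(C\mathcal{T}(r_0)^2 r_0 F)$ of \eqref{eq:c}.

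\textbf{Main obstacle.} The delicate point is the geometric bookkeeping behind (iii)–(iv): the stopping rule must be arranged so that every selected cube sits inside the flatness ball $B(x,\tilde r_{k+1}/2)$ of an associated contact point \emph{and} has diameter comparable to $\tilde r_{k+1}$ (the lower bound coming precisely from the failure of the parent), since only then does \Cref{lem:gradient_map} apply on all of $\overline Q$ with radii comparable to $\mathrm{diam}(Q)$ up to the explicit factors $\mathcal{T}(r_0)$ and $F$. The Euclidean identity ``$|\phi(\overline Q)|\lesssim F^n|Q|$'' then has to be replaced by its hyperbolic analogue through \Cref{lem:VD}, and it is the non-homogeneity of the volume — the growth exponent $\log_2\mathcal{D}$ with $\mathcal{D}=2^n\cosh^{n-1}(2\rho)$ — that produces the unwieldy constant $c$ in \eqref{eq:c}.
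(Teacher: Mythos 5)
Your proof follows essentially the same argument as the paper's: run a stopping-time subdivision on Christ's dyadic cubes (\Cref{thm:dyadic_cubes}), use compactness of $\mathscr{C}$ and the nested-chain contradiction to obtain termination, and verify (iii)--(iv) at the scale $\tilde r_{k+1}$ supplied by \Cref{lem:good_ring} and \Cref{lem:gradient_map}, with the volume comparison from \Cref{lem:VD}. The only real difference is presentational: you stop at the geometric criterion ($\overline Q\subset\overline{B(x,\tilde r_{k+1}/2)}$) and then verify (iii)--(iv) afterward, while the paper stops directly at ``(iii) and (iv) hold'' and finds, inside the infinite nested chain of offenders shrinking to $x\in\mathscr{C}$, the unique generation $j$ with $\delta_0\tilde r_{k+1}/2\le c_2\delta_0^j<\tilde r_{k+1}/2$, which gives both the upper and lower diameter bounds at once without invoking a parent.

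One small gap worth noting: your lower bound $\mathrm{diam}(Q)\gtrsim\tilde r_{k+1}$, which is what (iii) actually needs, is deduced from ``the parent is bad,'' and that step is vacuous for a starting-generation cube $Q^{j_\ast}_\alpha$ that is already good at the first pass. It is easily repaired, since for the starting generation $\mathrm{diam}(Q^{j_\ast}_\alpha)\simeq r_0\ge r_{k+1}\ge\tilde r_{k+1}$ holds automatically, but it should be said. (Also, containment in a ball of radius $\tilde r_{k+1}/2$ gives $\mathrm{diam}(Q)\le\tilde r_{k+1}$, not $\tilde r_{k+1}/2$; this does not affect the comparability.)
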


\begin{proof}
Let $c_1$, $c_2$, and $\delta_0$ be the constants given in \Cref{thm:dyadic_cubes}, which depend only on $n$. Let us fix the smallest integer $N$ such that $c_2 \delta_0^N \leq r_0$, then there are finitely many dyadic cubes $Q_\alpha^N$ of generation $N$ such that $\overline{Q}_\alpha^N \cap \mathscr{C} \neq \emptyset$ and $\mathscr{C} \subset \cup_\alpha \overline{Q}_\alpha^N$. Whenever a dyadic cube $Q_\alpha^j$ $(j \geq N)$ does not satisfy (iii) and (iv), we consider all of its successors $Q_\beta^{j+1} \subset Q_\alpha^j$ instead of $Q_\alpha^j$. Among these successors of $j+1$ generation, we only keep those whose closures intersect $A$ and discard the rest. We prove that this process must finish in a finite number of steps.

Assume to the contrary that the process produces an infinite sequence of nested dyadic cubes $\lbrace Q^j_\alpha \rbrace_{j=N}^\infty$. Then, the intersection of their closures is some contact point $x \in \mathscr{C}$. By \Cref{lem:gradient_map}, there is a $k \geq 0$ such that \eqref{eq:ring_eps} and \eqref{eq:gradient_map} hold. Let $j \geq N$ be such that $\delta_0 \tilde{r}_{k+1}/2 \leq c_2 \delta_0^j < \tilde{r}_{k+1}/2 \leq r_0$, then it follows from \Cref{thm:dyadic_cubes} that
\begin{equation} \label{eq:Q_in_B}
B(z_\alpha^j, c_1 \delta_0^j) \subset Q_\alpha^j \subset \overline{Q}_\alpha^j \subset B(x, \tilde{r}_{k+1}/2).
\end{equation}
Thus, it follows from \eqref{eq:gradient_map} and \eqref{eq:Q_in_B} that
\begin{equation*}
|\phi ( \overline{Q}_\alpha^j )| \leq |\phi(\overline{B(x, \tilde{r}_{k+1}/2)})| \leq |B(y, C\mathcal{S}_{\kappa}(7R) \mathcal{T}_{\kappa}(r_{k+1}) \tilde{f}_\kappa(x) r_k)|.
\end{equation*}
Since $\mathcal{S}_{\kappa}(7R) \tilde{f}_\kappa(x) \leq F$ and $r_k \leq 2r_{k+1} = 2\mathcal{T}_{\kappa}(r_{k+1}) \tilde{r}_{k+1} \leq 4\mathcal{T}_{\kappa}(r_0)c_2 \delta_0^{j-1}$, we have
\begin{equation*}
|\phi ( \overline{Q}_\alpha^j )| \leq |B(z_\alpha^j, C\mathcal{T}_{\kappa}^2(r_0) F c_1 \delta_0^j)|.
\end{equation*}
Therefore, by \Cref{lem:VD} we obtain
\begin{equation*}
|\phi ( \overline{Q}_\alpha^j )| \leq \mathcal{D} \left( C \mathcal{T}_{\kappa}^{2}(r_0) F \right)^{\log_2 \mathcal{D}} |Q_\alpha^j|
\end{equation*}
where $\mathcal{D} = 2^n \cosh^{n-1} (C\sqrt{\kappa} \mathcal{T}_{\kappa}^{2}(r_0) c_1\delta_0^j F)$, which shows that $Q_\alpha^j$ satisfies (iii).

If $z \in B(x, r_k)$, then $d(z, z_\alpha^j) \leq d(z, x) + d(x, z_\alpha^j) < r_k + c_2 \delta_0^j \leq 2r_0$, which shows that $B(x, r_k) \subset B(z_\alpha^j, 2r_0)$. Thus, by using \eqref{eq:ring_eps}, we have
\begin{align*}
|B(z_\alpha^j, 2r_0) \cap \lbrace u \leq \Gamma + C(\sup_{\overline{Q}_\alpha^j} \tilde{f}_\kappa(x)) (r_0/R)^2 \rbrace|
&\geq |R_k \cap \lbrace u \leq P_y + C\tilde{f}_\kappa(x)(r_k/R)^2 \rbrace | \\
&\geq (1-\varepsilon_0) |R_k| \\
&= (1-\varepsilon_0) (2^n-1)|B_{r_{k+1}}| \\
&\geq \mu |Q_\alpha^j|
\end{align*}
for some universal constant $\mu > 0$. This proves that $Q_\alpha^j$ also satisfies (iv), which yields a contradiction. Therefore, the process must stop in a finite number of steps.
\end{proof}

\section{A barrier function} \label{sec:barrier}

This section is devoted to a construction of a special barrier function, which is a key ingredient together with the ABP-type estimates for the Krylov–Safonov Harnack inequality. It is standard to use distance function to construct a barrier function, but computations are significantly different from the standard argument. We will observe how the negative curvature of hyperbolic spaces comes into play. Let us begin with some inequalities.

\begin{lemma} \label{lem:arccos}
Let $\alpha > 0$ and $R_0 > 0$. Then
\begin{equation} \label{eq:arccos1}
(\cosh^{-1}(t\cosh (\sqrt{\kappa} R_0)))^{-2\alpha} - (\sqrt{\kappa}R_0)^{-2\alpha} \geq -2\alpha \frac{\mathcal{H}_\kappa(R_0)}{(\sqrt{\kappa} R_0)^{2\alpha+2}} (t-1)
\end{equation}
for all $t > 1/\cosh (\sqrt{\kappa}R_0)$. Moreover,
\begin{align} \label{eq:arccos2}
\begin{split}
  &\frac{(\cosh^{-1}(t \cosh (\sqrt{\kappa}R_0)))^{-2\alpha-2}}{t^2 \cosh^2 (\sqrt{\kappa}R_0) - 1} - \frac{(\sqrt{\kappa}R_0)^{-2\alpha-2}}{\sinh^2 (\sqrt{\kappa}R_0)} \\
&\geq - \frac{(2\alpha+2 + 2\mathcal{H}_{\kappa}(R_0)) \mathcal{H}_{\kappa}(R_0)}{(\sqrt{\kappa}R_0)^{2\alpha+4} \sinh^2 (\sqrt{\kappa}R_0)} (t - 1)
\end{split}
\end{align}
and
\begin{align} \label{eq:arccos3}
\begin{split}
&(\cosh^{-1}(t \cosh (\sqrt{\kappa}R_0)))^{-2\alpha-1} \frac{t \cosh (\sqrt{\kappa}R_0)}{(t^2 \cosh^2 (\sqrt{\kappa}R_0) - 1)^{3/2}} - (\sqrt{\kappa}R_0)^{-2\alpha-1} \frac{\cosh (\sqrt{\kappa}R_0)}{\sinh^3 (\sqrt{\kappa}R_0)} \\
&\geq - \frac{\left( (2\alpha+1) \mathcal{H}_\kappa(R_0) - (\sqrt{\kappa}R_0)^2 + 3\mathcal{H}_\kappa^2(R_0) \right) \mathcal{H}_\kappa(R_0)}{(\sqrt{\kappa}R_0)^{2\alpha+4} \sinh^2 (\sqrt{\kappa}R_0)} (t - 1)
\end{split}
\end{align}
for all $t > 1/\cosh (\sqrt{\kappa}R_0)$.
\end{lemma}

\begin{proof}
Since the function
\begin{equation*}
  f(t) := (\cosh^{-1} (t\cosh(\sqrt{\kappa}R_0))^{-2\alpha}, \quad t > \frac{1}{\cosh (\sqrt{\kappa}R_0)},
\end{equation*}
is convex, \eqref{eq:arccos1} follows from the inequality $f(t) \geq f(1) + f'(1) (t-1)$. The inequalities \eqref{eq:arccos2} and \eqref{eq:arccos3} can be obtained similarly by considering
\begin{align*}
  g(t)&:= \frac{(\cosh^{-1}(t \cosh (\sqrt{\kappa}R_0)))^{-2\alpha-2}}{t^2 \cosh^2 (\sqrt{\kappa}R_0) - 1} \quad \text{and} \\
  h(t)&:= (\cosh^{-1}(t \cosh(\sqrt{\kappa} R_0)))^{-2\alpha-1} \frac{t \cosh(\sqrt{\kappa} R_0)}{(t^2 \cosh^2(\sqrt{\kappa} R_0) - 1)^{3/2}},
\end{align*}
which are also convex functions.
\end{proof}

Using \Cref{lem:arccos}, we first construct a barrier function when $s$ is sufficiently close to 1. Let us denote $\mathcal{K}_{s, \kappa} = \mathcal{K}_{n, s, \kappa}$ in the following lemmas.

\begin{lemma} \label{lem:barrier1}
Let $\delta \in (0,1)$. There are constants $\alpha > 0$ and $s_0 \in (0,1)$, depending only on $n$, $\lambda$, $\Lambda$, $\delta$, and $\sqrt{\kappa} R$, such that the function
\begin{equation*}
v(x) = \max \bigg\lbrace - \left( \frac{\delta}{20} \right)^{-2\alpha}, - \left( \frac{d_{\Hnk}(x, 0)}{5R} \right)^{-2\alpha} \bigg\rbrace
\end{equation*}
is a supersolution to
\begin{equation} \label{eq:Mv}
\frac{(7R)^2}{\mathcal{I}_{0,\kappa}(7R)} \mathcal{M}^+ v(x) + \Lambda \mathcal{H}_\kappa(7R) \leq 0,
\end{equation}
for every $s_0 < s < 1$ and $x \in B_{5R} \setminus \overline{B}_{\delta R/4}$.
\end{lemma}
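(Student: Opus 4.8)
The plan is to estimate $\mathcal{M}^+v(x)$ directly. Fix $x\in B_{5R}\setminus\overline{B}_{\delta R/4}$ and put $R_0=d_{\Hn}(x,0)\in(\tfrac{\delta R}{4},5R)$. Since $R_0>\tfrac{\delta R}{4}$, the function $v$ agrees, in a neighbourhood of $x$, with the smooth radial function
\[
w(z):=-\Big(\tfrac{d_{\Hn}(z,0)}{5R}\Big)^{-2\alpha}=\psi\big(d_{\Hn}(z,0)\big),\qquad \psi(\rho)=-(5R)^{2\alpha}\rho^{-2\alpha},
\]
so $\mathcal{M}^+v(x)$ is well defined; moreover $v\in L^\infty(\Hn)$ with $v\le 0$. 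Writing $\delta(v,x,z)=\tfrac12\big(v(z)+v(\bar z)-2v(x)\big)$ with $\bar z=\exp_x(-\exp_x^{-1}z)$, the first step is to reduce to a two–sided pointwise control of $\delta(w,x,z)$. Parametrising $z=\exp_x(s\omega)$ and using the hyperbolic law of cosines $\cosh d_{\Hn}(z,0)+\cosh d_{\Hn}(\bar z,0)=2\cosh R_0\cosh s$ together with the convexity of $t\mapsto t^{-2\alpha}$ and the concavity of $\cosh^{-1}$, one sees that $\delta(w,x,z)$ is governed, to leading order in $s$, by $\tfrac12 s^2\big(\psi''(R_0)\,\mu^2+\psi'(R_0)\coth R_0\,(1-\mu^2)\big)$ with $\mu=\omega\cdot\hat r$; here the radial coefficient $\psi''(R_0)=-2\alpha(2\alpha+1)(5R)^{2\alpha}R_0^{-2\alpha-2}$ is negative of order $\alpha^2$, while the tangential coefficient $\psi'(R_0)\coth R_0$ is positive of order $\alpha$. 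The inequalities of \Cref{lem:arccos} are precisely the elementary one–variable bounds needed to make this heuristic rigorous: they compare the increments of $\rho\mapsto\psi(\rho)$, of $\rho\mapsto\psi(\rho)/\sinh^2\rho$, and of the related quantity carrying the curvature factor $1/\sinh^2$, with their linearisations at $\rho=R_0$.

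Next I would split $\Hn=B_r(x)\cup(\Hn\setminus B_r(x))$, with $r$ chosen as a fixed multiple of $R_0/\alpha$. On $B_r(x)$ one has $v=w$ (up to the truncation correction discussed below), and integrating the bound from \Cref{lem:arccos} against $\mathcal{K}_\gamma$, using the definition of $\mathcal{I}_0(r)$, \Cref{cor:decreasing} (so that $\mathcal{I}_0(r)\ge (r/R)^2\,\mathcal{I}_0(R)$), and the $\gamma\to1$ normalisation from \Cref{thm:kernel} and \Cref{lem:limit} (i.e. $\mathcal{I}_0\to 6$ and $C(3,\gamma)\sim 1-\gamma$), the radially–concave directions dominate the tangentially–convex ones once $\alpha$ is large, yielding a contribution bounded above by $-c\,\lambda\,\alpha^2\,(5R/R_0)^{2\alpha}R_0^{-2}\,\mathcal{I}_0(r)$, where the third–order remainder of the geodesic Taylor expansion is absorbed by the smallness of $r$. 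On $\Hn\setminus B_r(x)$ I would use the crude bounds $\delta^-(v,x,z)\ge 0$ and $0\le\delta^+(v,x,z)\le -v(x)=(5R/R_0)^{2\alpha}$ (valid since $v\le 0$), which gives a contribution at most $\Lambda\,(5R/R_0)^{2\alpha}\,\mathcal{I}_\infty(r)/r^2$; by \Cref{lem:I3} this last quantity carries the normalising constant $C(3,\gamma)$, hence a factor comparable to $1-\gamma$, and is therefore of lower order than the $\alpha^2\mathcal{I}_0(r)$ gained on $B_r(x)$ once $\gamma_0$ is chosen close enough to $1$.

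Finally, combining the two pieces and using $(5R/R_0)^{2\alpha}\ge 1$, $R_0\le 5R$, and the relation $\mathcal{I}_\infty(7R)\le\tfrac{1-\gamma}{\gamma}\mathcal{H}(7R)\mathcal{I}_0(7R)$ from \Cref{lem:I_infty_I_0}, one arrives at an estimate of the form $\mathcal{M}^+v(x)\le -c\,\lambda\,(20/\delta)^{2\alpha}R^{-2}+C\,\Lambda\,(1-\gamma)\,(20/\delta)^{2\alpha}\,(\cdot)$; choosing first $\alpha$ large (depending only on $n,\lambda,\Lambda,\delta,R$) so that the concavity mechanism operates and the $\alpha^2$ gain offsets the $\alpha^{-2}$ loss from $\mathcal{I}_0(R_0/\alpha)$, and then $\gamma_0$ close enough to $1$, makes the right–hand side no larger than $-\tfrac{\mathcal{I}_0(7R)}{(7R)^2}\Lambda\mathcal{H}(7R)$, which is \eqref{eq:Mv}. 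The main obstacle is the near–$x$ step: one must make quantitative and uniform the statement that the radially–concave part of $\delta(w,x,z)$ (of size $\alpha^2$) dominates, after integration against the strongly singular kernel $\mathcal{K}_\gamma$, both the tangentially–convex part (of size $\alpha$) and the contribution of the truncation region $\{d_{\Hn}(\cdot,0)<\tfrac{\delta R}{4}\}$, where $v$ is the constant $-(\delta/20)^{-2\alpha}$ rather than $w$; \Cref{lem:arccos}, together with the precise $\gamma\to 1$ asymptotics of $\mathcal{I}_0$, $\mathcal{I}_\infty$ and of $C(3,\gamma)$, is exactly what is used to carry this out.
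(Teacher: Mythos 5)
Your proposal starts from the same ingredients as the paper (law of cosines, convexity of $(\cosh^{-1}(\cdot))^{-2\alpha}$, \Cref{lem:arccos}, the split into a near ball and its complement, \Cref{lem:I_infty_I_0}, and the $\gamma\to1$ asymptotics), but there is a genuine flaw in the way you try to close the near--$x$ estimate.

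You choose the inner ball radius $r\sim R_0/\alpha$ so that the geodesic Taylor remainder is controlled, and you acknowledge that $\mathcal{I}_0(r)$ then costs a factor $\alpha^{-2}$ that ``offsets'' the $\alpha^2$ gain from the radial Hessian. That cancellation is exactly the problem: once the $\alpha^2$ is spent, the near contribution, after multiplying by $(7R)^2/\mathcal{I}_0(7R)$ and using \Cref{cor:decreasing}, is at best $-c\lambda(5R/R_0)^{2\alpha}$ with $c$ a fixed universal constant. Since $(5R/R_0)^{2\alpha}\ge 1$ but can be as small as $1$ when $R_0$ is close to $5R$ (your replacement by $(20/\delta)^{2\alpha}$ is the maximum over $R_0$, not a lower bound), you are left with a negative term of size $O(\lambda)$, independent of $\alpha$. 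The inequality \eqref{eq:Mv} requires this term to dominate $\Lambda\mathcal{H}(7R)$, which depends on $R$ and on $\Lambda/\lambda$, and there is no parameter left to enforce this. In other words, by shrinking $r$ with $\alpha$ you have destroyed precisely the amplification that $\alpha$ is supposed to supply.

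The paper avoids this by keeping the inner radius fixed at $R_0/2$, independent of $\alpha$. Then the favourable term $I_{1,3}$, after averaging $\omega_1^2$ over the sphere and using \eqref{eq:alpha}, produces $-C_1\Lambda\mathcal{H}(7R)\,\mathcal{I}_0(R_0/2)/R_0^2$, where $C_1$ can be made as large as one wishes by increasing $\alpha$ (this is the role of \eqref{eq:alpha}). The price is that the Taylor/linearisation remainders $I_{1,1},I_{1,2}$ now grow polynomially in $\alpha$ and carry extra factors of $\cosh r-1\sim r^2$; after integration they are bounded by $C\Lambda\,C(3,\gamma)\,R_0^{-4}S^5_\nu(R_0/2)$, and it is the vanishing of the normalising constant $C(3,\gamma)$ as $\gamma\to1$ --- not the smallness of $r$ --- that kills them. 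The logical order is therefore: fix $C_1$ large, choose $\alpha$ so that \eqref{eq:alpha} holds, then choose $\gamma_0$ close to $1$ (depending on $\alpha$) so that the $C(3,\gamma)$-weighted errors and the $\tfrac{1-\gamma}{\gamma}$-weighted tail term $I_2$ are negligible. Your plan has the right pieces, but you must decouple the inner radius from $\alpha$ and use the $C(3,\gamma)\to0$ mechanism, rather than smallness of $r$, to control the remainder.

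One further small point: you should say explicitly how you pass from $v$ to the smooth radial function $w$ on the part of $B_r(x)$ that may intersect $B_{\delta R/4}$; since $v\ge w$ with equality at $x$, one has $\delta(v,x,z)\ge\delta(w,x,z)$, which is the wrong direction for bounding $\mathcal{M}^+v$ from above by $\mathcal{M}^+w$, so this needs an additional word (e.g.\ estimating the region $\{v\neq w\}$ directly using $v=$ const there).
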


\begin{proof}
Fix $x$ and let $R_0 := d_{\Hnk}(x,0) \in (\delta R/4, 5R)$. We are going to consider the coordinates centered at $x$. There is an isometry $\varphi \in SO(1,n)$ such that $x = \varphi(0)$ and $0 = \varphi (\frac{1}{\sqrt{\kappa}}\cosh (\sqrt{\kappa}R_0), \frac{1}{\sqrt{\kappa}}\sinh(\sqrt{\kappa} R_0) e_1)$ with $e_1 \in \mathbb{S}^{n-1}$. Notice that 0 denotes $0_{\kappa}=(\frac{1}{\sqrt{\kappa}},0,\cdots,0) \in \Hnk$.

  Let $z \in B_{R_0/2}(x)$, then $z = \varphi(\frac{1}{\sqrt{\kappa}}\cosh(\sqrt{\kappa} r), \frac{1}{\sqrt{\kappa}}\sinh(\sqrt{\kappa} r) \omega)$ for some $r \in [0, R_0/2)$ and $\omega \in \mathbb{S}^{n-1}$. By the hyperbolic law of cosines, we have
\begin{align*}
d_\Hnk(z, 0) 
  &= d_\Hnk\left(\varphi \left( \frac{\cosh(\sqrt{\kappa}r)}{\sqrt{\kappa}}, \frac{\sinh(\sqrt{\kappa}r)}{\sqrt{\kappa}}\omega \right), \varphi\left( \frac{\cosh (\sqrt{\kappa}R_0)}{\sqrt{\kappa}}, \frac{\sinh (\sqrt{\kappa}R_0)}{\sqrt{\kappa}} e_1 \right) \right) \\
  &= d_{\Hnk} \left( \left( \frac{\cosh(\sqrt{\kappa} r)}{\sqrt{\kappa}}, \frac{\sinh(\sqrt{\kappa}r)}{\sqrt{\kappa}} \right), \left( \frac{\cosh(\sqrt{\kappa} R_{0})}{\sqrt{\kappa}}, \frac{\sinh(\sqrt{\kappa}R_{0})}{\sqrt{\kappa}} \right) \right) \\
  &= \frac{1}{\sqrt{\kappa}}\cosh^{-1} (A-B)
\end{align*}
where $A = \cosh ( \sqrt{\kappa} r) \cosh (\sqrt{\kappa}R_0)$ and $B = \sinh (\sqrt{\kappa}r) \sinh (\sqrt{\kappa}R_0) \omega_1$. Similarly, we have
\begin{equation*}
  d_\Hnk(\exp_x(-\exp_x^{-1}z), 0) = \frac{1}{\sqrt{\kappa}}\cosh^{-1} (A+B).
\end{equation*}
Thus, we obtain
\begin{equation*}
  \delta(v, x, z) = - (5\sqrt{\kappa}R)^{2\alpha} \frac{(\cosh^{-1}(A-B))^{-2\alpha} + (\cosh^{-1}(A+B))^{-2\alpha} - 2(\sqrt{\kappa}R_0)^{-2\alpha}}{2}.
\end{equation*}
Since $(\cosh^{-1}(\cdot))^{-2\alpha}$ is convex at $A$, we obtain
\begin{align*}
\delta(v, x, z)
  \leq& -(5\sqrt{\kappa}R)^{2\alpha} \left( \alpha(2\alpha+1) \frac{(\cosh^{-1}A)^{-2\alpha-2}}{(A^2-1)^{1/2}} + \alpha \frac{A(\cosh^{-1}A)^{-2\alpha-1}}{(A^2-1)^{3/2}} \right)B^2 \\
      &- (5\sqrt{\kappa}R)^{2\alpha} \left( (\cosh^{-1}A)^{-2\alpha} - (\sqrt{\kappa}R_0)^{-2\alpha} \right).
\end{align*}
Moreover, by applying \Cref{lem:arccos} with $t=\cosh (\sqrt{\kappa} r)$, we have
\begin{equation} \label{eq:second_difference}
\begin{split}
\delta (v, x, z)
  \leq& ~\alpha(2\alpha+1) c_\delta \left( (2\alpha+2+2\mathcal{H}_{\kappa}(R_0)) \mathcal{H}_{\kappa}(R_0) \frac{\cosh (\sqrt{\kappa}r)-1}{(\sqrt{\kappa}R_0)^2} - 1 \right) \frac{\sinh^2 (\sqrt{\kappa}r)}{(\sqrt{\kappa}R_0)^2} \omega_1^2 \\
      &+ \alpha c_\delta \left( \left( (2\alpha+1) \mathcal{H}_\kappa(R_0) - (\sqrt{\kappa}R_0)^2 + 3\mathcal{H}_\kappa^2(R_0) \right) \frac{\cosh (\sqrt{\kappa}r) - 1}{(\sqrt{\kappa}R_0)^2} - 1 \right) \\
      &\qquad \times \mathcal{H}_\kappa(R_0) \frac{\sinh^2 (\sqrt{\kappa}r)}{(\sqrt{\kappa}R_0)^2} \omega_1^2 + 2\alpha c_\delta \mathcal{H}_\kappa(R_0) \frac{\cosh (\sqrt{\kappa}r) - 1}{(\sqrt{\kappa}R_0)^2},
\end{split}
\end{equation}
where $c_\delta = (20/\delta)^{2\alpha}$.
Let us now compute
\begin{equation} \label{eq:I}
\begin{split}
\mathcal{M}^+ v (x)
&\leq \int_{B(x, \frac{R_0}{2})} \left( \Lambda \delta^+(v, x, z) - \lambda \delta^-(v, x, z) \right) \mathcal{K}_{s, \kappa}(d_{\Hnk}(z, x)) \, \d \mu_{\Hnk}(z) \\
&\quad+ \int_{\Hn \setminus B(x, \frac{R_0}{2})} \left( \Lambda \delta^+(v, x, z) - \lambda \delta^-(v, x, z) \right) \mathcal{K}_{s, \kappa}(d_{\Hnk}(z, x)) \, \d \mu_{\Hnk}(z) \\
&=: I_1 + I_2.
\end{split}
\end{equation}
We take $\alpha = \alpha(n,\lambda, \Lambda, \sqrt{\kappa}R) > 0$ sufficiently large so that
\begin{equation} \label{eq:alpha}
\lambda(2\alpha+1) \fint_{\mathbb{S}^{n-1}} \omega_1^2 \, \d \sigma - \Lambda \mathcal{H}_{\kappa}(7R) > C_1\Lambda \mathcal{H}_{\kappa}(7R),
\end{equation}
for some universal constant $C_1 > 0$ to be determined later. Then, by \eqref{eq:second_difference} we have
\begin{equation} \label{eq:I_1}
\begin{split}
I_1
  &\leq \Lambda \alpha c_\delta \left( (2\alpha+1)\mathcal{H}_\kappa(R_0) -(\sqrt{\kappa} R_0)^2 + 3\mathcal{H}_\kappa^2(R_0) \right) \mathcal{H}_\kappa(R_0) \\
  &\qquad\qquad\qquad \times \int_{B_{R_{0}/2}} \frac{\cosh (\sqrt{\kappa}r) - 1}{(\sqrt{\kappa}R_0)^2} \frac{\sinh^2 (\sqrt{\kappa}r)}{(\sqrt{\kappa}R_0)^2} \omega_1^2 \mathcal{K}_{s, \kappa}(d_{\Hnk}(z, x)) \, \d \mu_{\Hnk}(z) \\
      &\quad + \Lambda \alpha(2\alpha+1) c_\delta \left( 2\alpha+2+2\mathcal{H}_\kappa(R_0) \right) \mathcal{H}_\kappa(R_0) \\
      &\qquad\qquad\qquad \times \int_{B_{R_0/2}} \frac{\cosh (\sqrt{\kappa}r) - 1}{(\sqrt{\kappa}R_0)^2} \frac{\sinh^2 (\sqrt{\kappa}r)}{(\sqrt{\kappa}R_0)^2} \omega_1^2 \mathcal{K}_{s, \kappa}(d_{\Hnk}(z, x)) \, \d \mu_{\Hnk}(z) \\
      &\quad + \alpha c_\delta \int_{B_{\frac{R_0}{2}}} \left( 2\Lambda \mathcal{H}_\kappa(R_0) \frac{\cosh (\sqrt{\kappa}r) - 1}{(\sqrt{\kappa}R_0)^2} - \lambda (2\alpha+1+\mathcal{H}_\kappa(R_0)) \frac{\sinh^2(\sqrt{\kappa} r)}{(\sqrt{\kappa}R_0)^2} \omega_1^2 \right) \\
      &\qquad\qquad\qquad \times \mathcal{K}_{s, \kappa} (d_{\Hnk}(z, x))\, \d \mu_{\Hnk}(z) \\
=&~ \alpha c_\delta \left( I_{1,1} + I_{1,2} + I_{1,3} \right).
\end{split}
\end{equation}
We use \eqref{eq:alpha} to estimate $I_{1,3}$ as follows:
\begin{equation}\label{eq:I_11}
\begin{split}
I_{1,3}
  &= \int_{B_{R_0/2}} \left( 4\Lambda \mathcal{H}_\kappa(R_0) - 4\lambda (2\alpha+1+\mathcal{H}_\kappa(R_0)) \cosh^2 \left( \frac{\sqrt{\kappa}}{2} \right)\omega_1^2 \right) \frac{\sinh^2(\sqrt{\kappa}\frac{r}{2})}{(\sqrt{\kappa}R_0)^2} \mathcal{K}_{s, \kappa} \, \d \mu_{\Hnk}(z) \\
  &\leq \int_0^{R_0/2} \left( 4\Lambda| \mathbb{S}^{n-1} |\mathcal{H}_\kappa(7R) - 4\lambda(2\alpha+1) \int_{\mathbb{S}^{n-1}} \omega_1^2 \, \d \sigma \right) \frac{\sinh^2(\sqrt{\kappa} \frac{r}{2})}{(\sqrt{\kappa}R_0)^2} \mathcal{K}_{s, \kappa} \frac{\sinh^{n-1} (\sqrt{\kappa}r)}{\sqrt{\kappa}^{n-1}} \, \d r \\
  &\leq -4C_1\Lambda \frac{\mathcal{H}_\kappa(7R)}{(\sqrt{\kappa}R_0)^2} \int_0^{R_0/2} | \mathbb{S}^{n-1} | \left( \frac{r}{2} \right)^2 \mathcal{K}_{s, \kappa}(r) \frac{\sinh^{n-1} (\sqrt{\kappa}r)}{\sqrt{\kappa}^{n-1}} \, \d r \\
&= -C_1\Lambda \mathcal{H}_\kappa(7R) \frac{\mathcal{I}_{0,\kappa}(R_0/2)}{(\sqrt{\kappa}R_0)^2}.
\end{split}
\end{equation}
For $I_{1,1}$ and $I_{1,2}$, we observe that $\cosh (\sqrt{\kappa}r)-1 \leq C \kappa  r^2$ and $\sinh^2 (\sqrt{\kappa}r) \leq C\kappa r^2$ for $r \in [0, R_0/2]$, where $C$ is some constant depending on $\sqrt{\kappa}R$.
Thus, by using \Cref{lem:kernel-asymp} and \eqref{eq:scaling-kernel} we obtain
\begin{align}\label{eq:I_23}
\begin{split}
I_{1,1} + I_{1,2} 
&\leq C\Lambda \int_0^{R_0/2} \int_{\mathbb{S}^{n-1}} \frac{\cosh (\sqrt{\kappa} r)-1}{\kappa R_0^2} \frac{\sinh^2 (\sqrt{\kappa} r)}{\kappa R_0^2} \omega_1^2 \mathcal{K}_{s, \kappa}(r) \frac{\sinh^{n-1}(\sqrt{\kappa} r)}{\sqrt{\kappa}^{n-1}} \, \d\sigma \, \d r \\
&\leq C\Lambda(1-s) \frac{\sqrt{\kappa}^{1+s}}{R_{0}^{4}} \int_0^{R_0/2} r^{4-s} I_{\frac{n}{2}-1}\left( \frac{n-1}{2}\sqrt{\kappa}r \right)K_{\frac{n}{2}+s} \left( \frac{n-1}{2}\sqrt{\kappa} r\right) \, \d r \\
&\leq C\Lambda(1-s) \frac{\sqrt{\kappa}^{-4+2s}}{R_0^{4}} \int_0^{\frac{n-1}{4}\sqrt{\kappa}R_{0}} r^{4-s} I_{\frac{n}{2}-1}(r) K_{\frac{n}{2}+s} (r) \, \d r \\
& \leq C\Lambda(1-s) \frac{\sqrt{\kappa}^{-4+2s}}{R_0^4} A^{4-s}_{\frac{n}{2}+s, \frac{n}{2}-1}\left( \frac{n-1}{4} \sqrt{\kappa}R_0 \right),
\end{split}
\end{align}
where $A$ is the function defined in \eqref{eq:A-definite}.

On the other hand, by using the fact that $v$ is bounded and \Cref{prop:I_infty_I_0}, we obtain
\begin{equation} \label{eq:I_2}
I_2 \leq C\Lambda \frac{\mathcal{I}_{\infty,\kappa}(R_0/2)}{(R_0/2)^2} \leq C\Lambda \frac{1-s}{s} \mathcal{H}_{\kappa}(7R) \frac{\mathcal{I}_{0,\kappa} (7R)}{(7R)^2}.
\end{equation}
Thus, \eqref{eq:I}, \eqref{eq:I_1}, \eqref{eq:I_11}, \eqref{eq:I_23}, \eqref{eq:I_2}, and \Cref{lem:decreasing} yield
\begin{equation} \label{eq:Mv1}
\begin{split}
&\frac{(7R)^2}{\mathcal{I}_{0,\kappa} (7R)} \mathcal{M}^+ v (x) \\
&\leq C \alpha\Lambda \left( -C_1 + C(1-s) \frac{(7R)^2}{\mathcal{I}_{0,\kappa} (7R)} \frac{\sqrt{\kappa}^{-4+2s}}{R_0^4} A^{4-s}_{\frac{n}{2}+s, \frac{n}{2}-1}\left(\frac{n-1}{4}\sqrt{\kappa}R_0\right) + \frac{1-s}{s} \right) \mathcal{H}_{\kappa} (7R).
\end{split}
\end{equation}
Recall from \Cref{prop:limit_zero} that $\mathcal{I}_0(7R) \to C$ as $s \to 1$. Moreover, the function $A^{4-s}_{\frac{n}{2}+s, \frac{n}{2}-1}$ does not blow up as $s \to 1$ by \Cref{lem:A}. Thus, the second and the third terms in \eqref{eq:Mv1} can be made as small as we want by choosing $s_{0}$ close to 1. Therefore, the proof is finished by assuming that we have taken $\alpha$ sufficiently large so that \eqref{eq:Mv} holds.
\end{proof}

In the following lemma, we construct a barrier function for any $s \in (s_0, 1)$ for given $s_0 \in (0,1)$.

\begin{lemma} \label{lem:barrier2}
Given $s_0 \in (0,1)$ and $\delta \in (0,1)$, there exist universal constants $\alpha > 0$ and $\eta \in (0, 1/4]$, depending only on $n$, $\lambda$, $\Lambda$, $\delta$, $\sqrt{\kappa}R$, and $s_0$, such that the function
\begin{equation*}
v(x) = \max \bigg\lbrace - \left( \frac{\eta \delta}{20} \right)^{-2\alpha}, - \left( \frac{d_\Hnk(x,0)}{5R} \right)^{-2\alpha} \bigg\rbrace
\end{equation*}
is a supersolution to
\begin{equation} \label{eq:outside}
\frac{(7R)^2}{\mathcal{I}_{0,\kappa} (7R)} \mathcal{M}^+ v(x) + \Lambda \mathcal{H}_{\kappa} (7R) \leq 0,
\end{equation}
for every $s_0 < s < 1$ and $x \in B_{5R} \setminus \overline{B}_{\delta R/4}$.
\end{lemma}

\begin{proof}
Let $s_1$ and $\alpha_1$ be the $s_0$ and $\alpha$ in \Cref{lem:barrier1}, respectively. When $s \in [s_1, 1)$, the desired result holds with $\alpha_1$ and $\eta = 1/4$.

Let us now assume $s \in (s_0, s_1)$. For $x$ with $R_0 := d_\Hnk(x, 0) \in (\delta R/4, 5R)$, we know that $v \in C^2(B(x, R_0/2))$ and that $\delta^+(v, x, z)$ is bounded for $z \in \Hnk \setminus B(x, R_0/2)$. Thus, we have
\begin{equation} \label{eq:not_integrable}
\frac{(7R)^2}{\mathcal{I}_{0,\kappa} (7R)} \mathcal{M}^+ v (x) \leq C \mathcal{H}_{\kappa} (7R) - \lambda \int_{\Hnk} \delta^-(v, x, z) \mathcal{K}_{s, \kappa}(d_{\Hnk}(z, x)) \, \d \mu_{\Hnk}(z).
\end{equation}
If we take $\alpha = \max \lbrace \alpha_1, n/2 \rbrace$, then the function $\delta^-(-(d_\Hnk(\cdot,0)/5R)^{-2\alpha}, x, z)$ is not integrable. Therefore, the last integral in \eqref{eq:not_integrable} can be made arbitrarily large, by taking $\eta$ small. In particular, we choose $\eta$ so that \eqref{eq:outside} holds.
\end{proof}

\begin{corollary} \label{cor:barrier}
Let $\delta \in (0,1)$ and assume $0 < s_0 \leq s < 1$. Then, there is a function $v_\delta$ such that
\begin{equation*}
\begin{cases}
v_\delta \geq 0 &\text{in}~ \Hnk \setminus B_{5R}, \\
v_\delta \leq 0 &\text{in}~ B_{2R}, \\
\frac{(7R)^2}{\mathcal{I}_{0,\kappa} (7R)} \mathcal{M}^+ v_\delta + \Lambda \mathcal{H}_{\kappa} (7R) \leq 0 &\text{in}~ B_{5R} \setminus \overline{B}_{\delta R/4}, \\
\frac{(7R)^2}{\mathcal{I}_{0,\kappa} (7R)} \mathcal{M}^+ v_\delta \leq C\Lambda \mathcal{H}_{\kappa} (7R) &\text{in}~ B_{5R}, \\
v \geq -C &\text{in}~ B_{5R}, \\
\end{cases}
\end{equation*}
for some universal constant $C > 0$, depending only on $n$, $\lambda$, $\Lambda$, $\delta$, $\sqrt{\kappa}R$, and $s_0$.
\end{corollary}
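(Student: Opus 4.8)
The plan is to read $v_\delta$ off from the barrier $v$ produced in \Cref{lem:barrier2}, after two harmless adjustments: a vertical shift to fix the two sign conditions, and a smooth truncation near the origin so that $v_\delta$ is $C^2$ on a neighbourhood of $\overline{B}_{\delta R/4}$. Concretely, I would take $v$ as in \Cref{lem:barrier2}, applied with $\gamma_0/2$ in place of $\gamma_0$ so that it is available for every $\gamma\in[\gamma_0,1)$, with its constants $\alpha$ and $\kappa$. On $\Hn\setminus\overline{B}_{\kappa\delta R/4}$ one has $v=-(d_\Hn(\cdot,0)/5R)^{-2\alpha}$, and the only non-smoothness of $v$ is a convex corner on the sphere $\{d_\Hn(\cdot,0)=\kappa\delta R/4\}$, which lies inside $B_{\delta R/8}$ since $\kappa\le 1/4$. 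I would then fix a smooth radial function $\tilde v$ with $\tilde v\le v$ on $\Hn$, $\tilde v\equiv v$ on $\Hn\setminus B_{\delta R/8}$, and $\tilde v\ge -C$ (a standard smooth inward extension of $-(d_\Hn(\cdot,0)/5R)^{-2\alpha}$ from $\partial B_{\delta R/8}$, pushed below $v$), and set $v_\delta:=\tilde v+1$; note $v_\delta\in C^\infty(\Hn)\cap L^\infty(\Hn)$.

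For the sign conditions: on $M\setminus B_{5R}$ one has $d_\Hn(\cdot,0)\ge 5R>\delta R/8$, hence $v_\delta=-(d_\Hn(\cdot,0)/5R)^{-2\alpha}+1\ge 0$; on $B_{2R}$ a direct estimate of $\max\{-(\kappa\delta/20)^{-2\alpha},-(d_\Hn(\cdot,0)/5R)^{-2\alpha}\}$ using $\alpha>0$, $\kappa\le 1/4$, $\delta<1$ gives $v<-1$, so $v_\delta\le v+1<0$; and $v_\delta\ge -C$ on $B_{5R}$ by the choice of $\tilde v$. For the equation on $B_{5R}\setminus\overline{B}_{\delta R/4}$, I would use that $\mathcal{M}^+$ is invariant under adding a constant and that $\tilde v\le v$ with $\tilde v=v$ outside $B_{\delta R/8}$: for such $x$ (which lies outside $\overline{B}_{\delta R/8}$) and every $L\in\mathcal{L}_0$, $Lv_\delta(x)=\mathrm{p.v.}\int(\tilde v(z)-v(x))\mathcal{K}(d_\Hn(z,x))\,\d\mu(z)\le Lv(x)$, so $\mathcal{M}^+v_\delta(x)\le\mathcal{M}^+v(x)$, and the third condition follows from \Cref{lem:barrier2}.

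For the fourth condition: on $B_{5R}\setminus\overline{B}_{\delta R/4}$ it is weaker than the third one (the left side is $\le-\Lambda\mathcal{H}(7R)\le C\Lambda\mathcal{H}(7R)$), so only $\overline{B}_{\delta R/4}$ needs attention. There $v_\delta$ is $C^2$ on $\overline{B}_{\delta R/2}$ and bounded on $\Hn$, so applying \eqref{eq:well-definedness} on the ball of radius $\delta R/4$ around $x$ (contained in $\overline{B}_{\delta R/2}$) gives $\mathcal{M}^+v_\delta(x)\le\Lambda\|v_\delta\|_{C^2(\overline{B_{\delta R/4}(x)})}\mathcal{I}_0(\delta R/4)+2\Lambda\|v_\delta\|_{L^\infty(\Hn)}\mathcal{I}_\infty(\delta R/4)$ for every $x\in\overline{B}_{\delta R/4}$. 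Using that $\mathcal{I}_0$ is increasing, that $\mathcal{H}(7R)\ge 1$, and that $\mathcal{I}_\infty(\delta R/4)\le\gamma_0^{-1}\mathcal{H}(7R)\mathcal{I}_0(7R)$ by \Cref{lem:I_infty_I_0} together with $\gamma\ge\gamma_0$ and the monotonicity of $\mathcal{H}$ and $\mathcal{I}_0$, multiplying through by $(7R)^2/\mathcal{I}_0(7R)$ yields the fourth condition, with $C$ depending only on $n,\lambda,\Lambda,\delta,\gamma_0$ (and $R$).

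The one genuinely delicate point — the main obstacle — is the truncation near the origin. If (iv) is read in the viscosity sense, the naive choice $v_\delta=v+1$ already works, since a convex corner cannot be touched from above by a $C^2$ function. Pointwise, however, on the corner sphere the convexity of the corner makes the second-order increments of $v$ nonnegative there, and since $\mathcal{K}_\gamma(\rho)\sim C(3,\gamma)\rho^{-3-2\gamma}$ as $\rho\to 0$, the integral defining $\mathcal{M}^+v$ diverges once $\gamma\ge 1/2$, so (iv) fails there. Replacing $v$ by the smooth $\tilde v\le v$ inside $B_{\delta R/8}$ removes this corner without disturbing any earlier step — each used only $\tilde v\le v$ and $\tilde v=v$ outside $B_{\delta R/8}$ — and the $C^2$ norm of the resulting $v_\delta$ on $\overline{B}_{\delta R/2}$ is controlled in terms of the admissible parameters, as needed; constructing such a $\tilde v$ explicitly (e.g.\ gluing a smooth convex radial cap to $-(d_\Hn(\cdot,0)/5R)^{-2\alpha}$ across $\partial B_{\delta R/8}$ and subtracting a small cutoff) is the only routine computation left.
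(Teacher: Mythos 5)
Your proof is correct and takes essentially the same route as the paper: both build $v_\delta$ by adding a constant to the \Cref{lem:barrier2} barrier and smoothing it near the origin, then verify the two sign conditions directly, the third condition by comparison with \Cref{lem:barrier2} (using that the modification lies below the original and coincides with it outside a small ball), and the fourth by a crude $C^2$/$L^\infty$ bound plus monotonicity of the scale functions. The paper implements the smoothing via a single smooth increasing $\psi$ of $d_\Hn^2/R^2$ with vertical shift $(3/5)^{-2\alpha}$ and estimates the second differences of $v_\delta$ directly, while you shift by $1$, smooth only inside $B_{\delta R/8}$ with an explicit cap $\tilde v\le v$, and invoke \eqref{eq:well-definedness} together with \Cref{lem:I_infty_I_0} for the fourth condition — equivalent in substance. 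Your concluding remark correctly identifies why a naive $v+1$ fails pointwise once $\gamma\ge 1/2$ (the convex corner of the max makes $\delta^+(v,x,\cdot)$ of first order, so $\mathcal{M}^+v$ diverges), which is precisely what the paper's smooth $\psi$ is there to fix.
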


\begin{proof}
Let $\alpha$ and $\eta$ be the constants given in \Cref{lem:barrier2}, and define a function $v_\delta(x) = \psi(d_\Hnk^2(x,0) / R^2)$, where $\psi$ is a smooth and increasing function on $[0, \infty)$ such that
\begin{equation*}
\psi(t) = \left( \frac{3^2}{5^2} \right)^{-\alpha} - \left( \frac{t}{5^2} \right)^{-\alpha} \quad\text{if}~ t \geq (\eta \delta)^2.
\end{equation*}
We already know from \Cref{lem:barrier2} that $\frac{(7R)^2}{\mathcal{I}_{0,\kappa} (7R)} \mathcal{M}^+ v_\delta + \Lambda \mathcal{H}_{\kappa} (7R) \leq 0$ in $B_{5R} \setminus \overline{B}_{\delta R/4}$. Finally, for $x \in \overline{B}_{\delta R/4}$, we have $|\delta(v_\delta, x, z)| \leq C \mathcal{H}_{\kappa} (7R) d_\Hnk(x, z)^2/R^2$ for $z \in B_R(x)$ and $|\delta (v_\delta,x,z)| \leq C$ for $z \in \Hnk \setminus B_R(x)$ with a uniform constant $C > 0$. Therefore, we conclude $\frac{(7R)^2}{\mathcal{I}_{0,\kappa} (7R)} \mathcal{M}^+ v_\delta \leq C\Lambda \mathcal{H}_{\kappa} (7R)$ in $B_{5R}$, with the help of \Cref{prop:I_infty_I_0}.
\end{proof}

\section{\texorpdfstring{$L^\varepsilon$}{L}-estimate} \label{sec:L_eps_estimate}

In this section, we prove the so-called $L^\varepsilon$-estimate, which connects a pointwise estimate to an estimate in measure. Such a result forms a basis for the proofs of the Harnack inequality and H\"older estimate. From now on, we will prove the results only on $\Hn$ since \Cref{thm:Harnack} and \Cref{thm:Holder} can be derived from the results on $\Hn$ by using a simple scaling argument. Moreover, since the essential results in the previous sections have been proved on $\Hnk$, one may easily reprove forthcoming results on $\Hnk$. We write $\mathcal{K}_{s}=\mathcal{K}_{n, s, 1}$, $\mathcal{H}=\mathcal{H}_{1}$, $\mathcal{S}=\mathcal{S}_{1}$, and $\mathcal{T}=\mathcal{T}_{1}$ for simplicity in the sequel.

\begin{lemma} \label{lem:base}
Assume $0 < s_0 \leq s < 1$, and let $\delta \in (0,1)$. If $u \in C^2(B_{7R})$ is a nonnegative function on $\Hn$ satisfying $\frac{(7R)^2}{\mathcal{I}_0(7R)} \mathcal{M}^- u \leq \varepsilon_\delta$ in $B_{7R}$ and $\inf_{B_{2R}} u \leq 1$, then
\begin{equation*}
\frac{|\lbrace u \leq M_\delta \rbrace \cap B_{\delta R}|}{|B_{7R}|} \geq \mu_\delta,
\end{equation*}
where $\varepsilon_\delta > 0$, $\mu_\delta \in (0,1)$, and $M_\delta > 1$ are universal constants depending only on $n$, $\lambda$, $\Lambda$, $\delta$, $R$ and $s_0$.
\end{lemma}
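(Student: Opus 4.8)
The plan is to run the Krylov--Safonov scheme: amplify and subtract the barrier of \Cref{cor:barrier}, apply the discrete ABP estimate \Cref{lem:ABP_discrete} to the resulting function, and harvest its conclusion (iv). Fix $\delta\in(0,1)$ and set $w=u+Kv_\delta$, where $v_\delta$ is the barrier from \Cref{cor:barrier} and $K\geq1$ is a universal constant to be fixed below. Since $v_\delta\geq0$ in $\Hn\setminus B_{5R}$, $v_\delta\leq0$ in $B_{2R}$, $v_\delta\geq-C$ in $B_{5R}$, and $v_\delta$ is smooth, I would check at once that $w\in C^2(B_{7R})\cap L^\infty(\Hn)$, $w\geq0$ in $\Hn\setminus B_{5R}$, and $\inf_{B_{2R}}w\leq\inf_{B_{2R}}u\leq1$. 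From $\mathcal M^-w\leq\mathcal M^-u+K\mathcal M^+v_\delta$, the hypothesis $\frac{(7R)^2}{\mathcal I_0(7R)}\mathcal M^-u\leq\varepsilon_\delta$, and the two bounds on $\mathcal M^+v_\delta$ in \Cref{cor:barrier} (taking $\varepsilon_\delta$ small so that the term $\Lambda\mathcal H(7R)$ dominates off $\overline B_{\delta R/4}$), one gets $\mathcal M^-w\leq f$ in $B_{5R}$ with $f$ supported in $\overline B_{\delta R/4}$ and $0\leq f\leq\bar f$ for a constant $\bar f$ depending only on $n,\lambda,\Lambda,\delta,R,\gamma_0$; its finiteness as $\gamma\to1$ uses $\mathcal I_0(7R)\to6$ from \Cref{lem:limit}. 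Thus $w$ meets the hypotheses of \Cref{thm:ABP}, and I would introduce its envelope $\Gamma$ and contact set $\mathscr C$ as in \Cref{sec:ABP}; exactly as in the proof of \Cref{lem:good_ring}, each touching paraboloid $P_y$, $y\in B_R$, has vertex $c_y\leq\inf_{B_{2R}}w+\frac{(3R)^2}{2R^2}\leq\frac{11}{2}$, so $\Gamma\leq\frac{11}{2}$.

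The decisive step — and the one I expect to be hardest — is to confine the contact set to the small ball, $\mathscr C\subset\overline B_{\delta R/4}$. Let $x\in\mathscr C$ be touched by $P_y$. Splitting the integral for $\mathcal M^-w(x)$ over $B_R(x)\cup B_{5R}$ and its complement exactly as in the proof of \Cref{lem:good_ring}, and using the Hessian bound \Cref{lem:Hessian} on the near part, $w\geq0$ off $B_{5R}$ together with $w(x)\leq\frac{11}{2}$ on the far part, and \Cref{lem:I_infty_I_0} to dominate $\mathcal I_\infty$ by $\frac{1-\gamma}{\gamma}\mathcal H(7R)\mathcal I_0$, I would obtain $\mathcal M^-w(x)\geq-C''\Lambda\mathcal H(7R)\frac{\mathcal I_0(R)}{R^2}$ with $C''$ depending only on $n,\lambda,\Lambda,\gamma_0$. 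If instead $x\notin\overline B_{\delta R/4}$, then \Cref{cor:barrier} gives $\mathcal M^-w(x)\leq(\varepsilon_\delta-K\Lambda\mathcal H(7R))\frac{\mathcal I_0(7R)}{(7R)^2}$; since $\mathcal I_0$ is increasing, $\frac{\mathcal I_0(R)}{R^2}\leq49\frac{\mathcal I_0(7R)}{(7R)^2}$, so choosing $K\geq49C''+1$ and $\varepsilon_\delta\leq\Lambda\mathcal H(7R)$ makes this strictly less than $-C''\Lambda\mathcal H(7R)\frac{\mathcal I_0(R)}{R^2}$, a contradiction. This is precisely where the barrier of \Cref{sec:barrier} must be matched against the a priori lower bound for $\mathcal M^-$ at a contact point, with all constants uniform in $\gamma$.

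With $\mathscr C\subset\overline B_{\delta R/4}$ in hand, \Cref{lem:ABP_discrete} supplies pairwise disjoint dyadic cubes $\{Q_\alpha^j\}$ of diameter $\leq r_0$, each meeting $\mathscr C$, hence all lying in $\overline B_{\delta R/4+r_0}$; choosing the universal constant $\rho_0$ small in terms of $\delta$ (so that $r_0\leq\rho_0R$ is small) one can arrange $\overline B_{\delta R/4+3r_0}\subset B_{\delta R}$. On every such cube $f\leq\bar f$, so $F$ and the factor in (iii) are bounded by a universal constant; combining (iii) with $B_R\subset\phi(\mathscr C)\subset\bigcup_j\phi(\overline Q_\alpha^j)$ yields $\sum_j|Q_\alpha^j|\geq c_3|B_R|$ for a universal $c_3>0$. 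Property (iv) gives, for each $j$, a subset of $B(z_\alpha^j,2r_0)\subset B_{\delta R}$ of measure $\geq\mu|Q_\alpha^j|$ on which $w\leq\Gamma+C\bar F(r_0/R)^2\leq\frac{11}{2}+C\bar F=:C_4$, hence $u=w-Kv_\delta\leq C_4+K\sup_{B_{5R}}(-v_\delta)=:M_\delta$. Finally, a Vitali covering argument applied to $\{B(z_\alpha^j,2r_0)\}$ — extracting a disjoint subfamily whose fivefold dilates cover the union — together with the volume doubling \Cref{lem:VD} would convert $\sum_j|Q_\alpha^j|\geq c_3|B_R|$ into
\[
|\{u\leq M_\delta\}\cap B_{\delta R}|\;\geq\;\Big|\bigcup_j\big(B(z_\alpha^j,2r_0)\cap\{w\leq\Gamma+C\bar F(r_0/R)^2\}\big)\Big|\;\gtrsim\;\mu\,c_3|B_R|\;\gtrsim\;\mu_\delta|B_{7R}|,
\]
using \Cref{lem:VD} once more to compare $|B_R|$ with $|B_{7R}|$. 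The constants $\varepsilon_\delta,\mu_\delta\in(0,1)$ and $M_\delta>1$ so obtained depend only on $n,\lambda,\Lambda,\delta,R,\gamma_0$, as required.

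Beyond the confinement step, the remaining work is routine bookkeeping: verifying the hypotheses for $w$, reusing the envelope bound $c_y\leq\frac{11}{2}$ from the proof of \Cref{lem:good_ring}, checking that all $\gamma$-dependent quantities stay bounded as $\gamma\to1$ via \Cref{lem:limit}, \Cref{cor:decreasing}, and \Cref{lem:I_infty_I_0}, and carrying out the covering argument. I do not anticipate any serious difficulty there; the essential content lies in arranging the amplified barrier so that no touching paraboloid can be tangent to $w$ from below outside $\overline B_{\delta R/4}$.
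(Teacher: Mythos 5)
Your proposal is correct, but it takes a genuinely different route from the paper at the key step. The paper sets $w=u+v_\delta$ with no amplification, invokes \Cref{thm:ABP} directly, and then splits the resulting Riemann sum $\sum_j cF^n|Q_\alpha^j|$ into cubes meeting $\overline{B}_{\delta R/4}$ and cubes avoiding it: on the latter, the barrier inequality $\Lambda\mathcal{H}(7R)+\frac{(7R)^2}{\mathcal{I}_0(7R)}\mathcal{M}^+v_\delta\leq0$ forces $F\leq\mathcal{S}(7R)\varepsilon_\delta$, so their total contribution is $O(\varepsilon_\delta^n)\cdot|B_{6R}|$ and can be absorbed into $\frac{1}{2}|B_R|$ by taking $\varepsilon_\delta$ small (this is exactly where the $R$-dependence of $\varepsilon_\delta$ enters). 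You instead amplify the barrier to $Kv_\delta$ and prove the stronger geometric fact $\mathscr{C}\subset\overline{B}_{\delta R/4}$: at any contact point you match the a priori lower bound $\mathcal{M}^-w(x)\geq -C''\Lambda\mathcal{H}(7R)\mathcal{I}_0(R)/R^2$ (from the $I_2,I_3$ estimates of \Cref{lem:good_ring}, with \Cref{lem:I_infty_I_0} giving $\gamma$-uniformity) against the upper bound $\mathcal{M}^-w(x)\leq(\varepsilon_\delta-K\Lambda\mathcal{H}(7R))\mathcal{I}_0(7R)/(7R)^2$, and since $\mathcal{I}_0(R)/R^2\leq49\,\mathcal{I}_0(7R)/(7R)^2$ by monotonicity of $\mathcal{I}_0$, the choice $K\geq49C''+1$ produces the contradiction. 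This is the style used in classical second-order Krylov--Safonov and in Cabr\'e's manifold work, whereas the paper's split-sum argument is closer to Caffarelli--Silvestre. Your route yields a cleaner geometric picture (all good cubes automatically lie in $B_{\delta R}$ once $\rho_0$ is chosen small relative to $\delta$) at the mild cost that $M_\delta$ is inflated by $K\sup|v_\delta|$, while the paper avoids the amplification at the cost of a cube-by-cube absorption argument. Everything else — the envelope bound $\Gamma\leq11/2$, the application of \Cref{lem:ABP_discrete}(iii)--(iv), and the final covering step of balls $B(z_\alpha^j,2r_0)\subset B_{\delta R}$ — matches the paper's proof, and the constants you accumulate depend only on $n,\lambda,\Lambda,\delta,R,\gamma_0$ as required.
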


\begin{proof}
Let $v_\delta$ be the barrier function constructed in \Cref{cor:barrier} and define $w = u + v_\delta$. Then $w$ satisfies $w \geq 0$ in $\Hn \setminus B_{5R}$, $\inf_{B_{2R}} w \leq 1$, and $\mathcal{M}^-w \leq \frac{\mathcal{I}_0(7R)}{(7R)^2} \varepsilon_\delta + \mathcal{M}^+ v_\delta$ in $B_{5R}$. By applying \Cref{thm:ABP} to $w$ with its envelope $\Gamma_w$, we have
\begin{equation*}
|B_R| \leq \sum_j c F^n |Q^j_\alpha|,
\end{equation*}
where
\begin{equation*}
F = \mathcal{S}(7R) \bigg( \Lambda \mathcal{H}(7R) + \frac{R^2}{\mathcal{I}_0(R)} \bigg( \frac{\mathcal{I}_0(7R)}{(7R)^2}\varepsilon_\delta + \max_{\overline{Q}^j_\alpha} \mathcal{M}^+ v_\delta \bigg) \bigg)_+
\end{equation*}
and $c = C \cosh^{n-1} (C \mathcal{T}^{2}(r_0) r_0 F)  (C\mathcal{T}^{2}(r_0) F)^{(n-1)\log \cosh(C\mathcal{T}^{2}(r_0) r_0 F)} \mathcal{T}^{2n}(r_0)$. We obtain by \Cref{prop:monotonicity}
\begin{equation*}
F \leq \mathcal{S}(7R) \bigg( \varepsilon_\delta + \Lambda \mathcal{H}(7R) + \frac{(7R)^2}{\mathcal{I}_0(7R)} \max_{\overline{Q}^j_\alpha} \mathcal{M}^+ v_\delta \bigg)_+.
\end{equation*}
Since $\Lambda \mathcal{H}(7R) + \frac{(7R)^2}{\mathcal{I}_0(7R)}\mathcal{M}^+ v_\delta \leq 0$ in $B_{5R} \setminus \overline{B}_{\delta R/4}$ and $\frac{(7R)^2}{\mathcal{I}_0(7R)} \mathcal{M}^+ v_\delta \leq C\Lambda \mathcal{H}(7R)$ in $B_{5R}$, we have
\begin{equation*}
|B_R| \leq C \varepsilon_\delta^n \sum_{\overline{Q}_\alpha^j \cap \overline{B}_{\delta R/4} = \emptyset} |Q_\alpha^j| + C \sum_{\overline{Q}_\alpha^j \cap \overline{B}_{\delta R/4} \neq \emptyset} |Q_\alpha^j|
\end{equation*}
for some universal constant $C > 0$, depending on $R$. By taking $\varepsilon_\delta > 0$ sufficiently small, we have
\begin{equation*}
|B_{7R}| \leq C \sum_{\overline{Q}_\alpha^j \cap \overline{B}_{\delta R/4} \neq \emptyset} |Q_\alpha^j|.
\end{equation*}
By using \Cref{lem:ABP_discrete} (iv), we obtain
\begin{equation*}
|B_{7R}| \leq C \sum_{\overline{Q}_\alpha^j \cap \overline{B}_{\delta R/4} \neq \emptyset} |B(z_\alpha^j, 2r_0) \cap \lbrace w \leq \Gamma_w + C \rbrace|.
\end{equation*}
Whenever $\overline{Q}_\alpha^j \cap \overline{B}_{\delta R/4} \neq \emptyset$, the ball $B(z_\alpha^j, 2r_0)$ is contained in $B_{\delta R}$ if we have taken $\rho_0 = \delta/4$. Indeed, for $z \in B(z_\alpha^j, 2r_0)$
\begin{equation*}
d_\Hn(z,0) \leq d_\Hn(z, z_\alpha^j) + d_\Hn(z_\alpha^j, z_\ast) + d_\Hn(z_\ast, 0) \leq 2r_0 + r_0 + \delta R/4 < \delta R,
\end{equation*}
where $z_\ast$ is a point in $\overline{Q}_\alpha^j \cap \overline{B}_{\delta R}$. By taking a subcover of $\lbrace B(z_\alpha^j, 2r_0) \rbrace$ with finite overlapping and using $v_\delta \geq -C$ in $B_{5R}$, we arrive at
\begin{equation*}
|B_{7R}| \leq C |\lbrace u \leq M_\delta \rbrace \cap B_{\delta R}|
\end{equation*}
for some $M_\delta > 1$. Taking $\mu_\delta = 1/C$ finishes the proof.
\end{proof}

\Cref{lem:base}, together with the Calder\'on–Zygmund technique developed in \cite{Cab97}, provides the following $L^\varepsilon$-estimate. As in \cite{Cab97}, we fix $\delta = \frac{2c_1}{c_2} \delta_0$ and $\delta_1 = \delta_0(1-\delta_0)/2 \in (0,1)$. Let $k_R$ be the integer satisfying
\begin{equation*}
c_2 \delta_0^{k_R - 1} < R \leq c_2 \delta_0^{k_R - 2},
\end{equation*}
which is the generation of a dyadic cube whose size is comparable to that of some ball of radius $R$. 

\begin{lemma} \label{lem:L-eps}
Assume $0 < s_0 \leq s < 1$. Let $\varepsilon_\delta$, $\mu_\delta$, and $M_\delta$ be the constants in \Cref{lem:base}. Let $u \in C^2(B_{7R})$ be a nonnegative function on $\Hn$ satisfying $\frac{(7R)^2}{\mathcal{I}_0(7R)} \mathcal{M}^- u \leq \varepsilon_\delta$ in $B_{7R}$ and $\inf_{B_{\delta_1 R}} u \leq 1$. If $Q_1$ is a dyadic cube of generation $k_R$ such that $\inf_{x \in Q_1} d_\Hn(x, 0) \leq \delta_1 R$, then
\begin{equation*}
|\lbrace u > M_\delta^i \rbrace \cap Q_1| \leq (1 - c_\delta)^i |Q_1|.
\end{equation*}
for all $i= 1, 2, \cdots$. As a consequence, we have
\begin{equation*}
|\lbrace u > t \rbrace \cap Q_1| \leq C t^{-\varepsilon} |Q_1|, \quad t > 0,
\end{equation*}
for some universal constants $C > 0$ and $\varepsilon > 0$.
\end{lemma}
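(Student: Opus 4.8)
The plan is to prove the first assertion by induction on $i$ via the Calder\'on--Zygmund technique of Cabr\'e \cite{Cab97}, and then to derive the power decay $|\{u>t\}\cap Q_1|\leq Ct^{-\varepsilon}|Q_1|$ as a routine consequence. The engine of the induction is the purely measure-theoretic dyadic lemma: if $A\subset B\subset Q_1$ are measurable, $c\in(0,1)$, $|A|\leq c\,|Q_1|$, and every dyadic descendant $Q$ of $Q_1$ with $|A\cap Q|>c\,|Q|$ has its dyadic predecessor $\widehat{Q}\subset B$, then $|A|\leq c'\,|B|$ with $c'=c'(n,R,c)<1$. In the Euclidean case children and parents have measure ratio exactly $2^{-n}$; here the ratio $|Q|/|\widehat{Q}|$ is only bounded above and below by constants depending on $n$ and $R$ via \Cref{lem:VD}, but this is all the proof of the dyadic lemma needs, since every cube in question has diameter $\leq r_0<R$. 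We apply it with $A=\{u>M_\delta^{i+1}\}\cap Q_1$, $B=\{u>M_\delta^{i}\}\cap Q_1$, and $c=1-c_\delta$.

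For the base case $i=1$ we use \Cref{lem:base}. Since $B_{\delta_1 R}\subset B_{2R}$, the assumption $\inf_{B_{\delta_1R}}u\leq1$ gives $\inf_{B_{2R}}u\leq1$, so \Cref{lem:base} yields $|\{u\leq M_\delta\}\cap B_{\delta R}|\geq\mu_\delta|B_{7R}|$. The constants $\delta=\tfrac{2c_1}{c_2}\delta_0$, $\delta_1=\delta_0(1-\delta_0)/2$ and the generation $k_R$ are fixed (verbatim from \cite{Cab97}) precisely so that a generation-$k_R$ cube $Q_1$ meeting $\overline{B_{\delta_1R}}$ is comparable in size to $B_R$ and captures a fixed proportion of the mass of $\{u\leq M_\delta\}\cap B_{\delta R}$; combining this with $|B_{7R}|\asymp|Q_1|$ from \Cref{lem:VD} gives $|\{u\leq M_\delta\}\cap Q_1|\geq c_\delta|Q_1|$, i.e. $|\{u>M_\delta\}\cap Q_1|\leq(1-c_\delta)|Q_1|$. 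Since $\{u>M_\delta^{i+1}\}\cap Q_1\subset\{u>M_\delta\}\cap Q_1$, this also secures the hypothesis $|A|\leq(1-c_\delta)|Q_1|$ of the dyadic lemma for every $i$.

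For the inductive step, assume $|\{u>M_\delta^{i}\}\cap Q_1|\leq(1-c_\delta)^{i}|Q_1|$ and verify the predecessor condition: if a dyadic descendant $Q$ of $Q_1$ satisfies $|\{u>M_\delta^{i+1}\}\cap Q|>(1-c_\delta)|Q|$, then $\widehat{Q}\subset\{u>M_\delta^{i}\}$. If not, there is $\bar x\in\widehat{Q}$ with $u(\bar x)\leq M_\delta^{i}$. Apply \Cref{lem:base} to $w:=u/M_\delta^{i}$ at the scale of $\widehat{Q}$: the operator hypothesis survives because $\mathcal{M}^- w=M_\delta^{-i}\mathcal{M}^- u$ and $M_\delta>1$, and $\bar x\in\widehat{Q}\subset Q_1\subset B_{2R}$ with $w(\bar x)\leq1$ supplies the normalization; one concludes that a definite fraction of $Q$ lies in $\{w\leq M_\delta\}=\{u\leq M_\delta^{i+1}\}$, contradicting $|\{u>M_\delta^{i+1}\}\cap Q|>(1-c_\delta)|Q|$ once $c_\delta$ is taken small enough at the outset (the smaller of the two constants produced). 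The dyadic lemma then gives $|\{u>M_\delta^{i+1}\}\cap Q_1|\leq(1-c_\delta)\,|\{u>M_\delta^{i}\}\cap Q_1|\leq(1-c_\delta)^{i+1}|Q_1|$. Finally, for $t>M_\delta$ choose $i\geq1$ with $M_\delta^{i}<t\leq M_\delta^{i+1}$ to get $|\{u>t\}\cap Q_1|\leq(1-c_\delta)^{i}|Q_1|\leq M_\delta^{\varepsilon}t^{-\varepsilon}|Q_1|$ with $\varepsilon:=-\log(1-c_\delta)/\log M_\delta>0$, and use $|Q_1|\leq M_\delta^{\varepsilon}t^{-\varepsilon}|Q_1|$ when $t\leq M_\delta$.

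The main obstacle is the phrase ``apply \Cref{lem:base} at the scale of $\widehat{Q}$'': hyperbolic space carries no dilation symmetry, so one cannot renormalize $\widehat{Q}$ to a unit cube as in the Euclidean argument. This is handled by first upgrading \Cref{lem:base} to a family of estimates valid uniformly for balls of radius in $(0,R]$ --- legitimate because the ABP estimate \Cref{thm:ABP} and the barrier \Cref{cor:barrier} are proved with constants depending only on $n,\lambda,\Lambda,\gamma_0$ and the (bounded) radius, and because small hyperbolic balls are uniformly comparable to Euclidean balls --- and then invoking the member of this family at the scale of $\widehat{Q}$. The $R$-dependence of the final constants then comes from taking suprema of these scale-dependent constants over $(0,R]$ together with \Cref{lem:VD}, and the descent down the dyadic tree stays in the admissible range because all cubes have diameter $\leq r_0<R$.
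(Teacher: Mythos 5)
Your proposal is correct and follows the approach the paper intends, which is simply to combine \Cref{lem:base} with the Calder\'on--Zygmund/dyadic technique of Cabr\'e \cite{Cab97} over Christ's cubes (\Cref{thm:dyadic_cubes}); the paper supplies no proof of its own beyond that reference. You correctly identify the real subtlety --- that hyperbolic space admits no dilations, so one must invoke a version of \Cref{lem:base} at every scale $\rho$ down the dyadic tree --- and you give the right reason it goes through: the constants in the ABP estimate and barrier construction only improve as the radius decreases (and $\mathcal{I}_0(\rho)/\rho^2$ is decreasing by \Cref{cor:decreasing}, so the hypothesis $\frac{(7R)^2}{\mathcal{I}_0(7R)}\mathcal{M}^- u \le \varepsilon_\delta$ automatically passes to $\frac{(7\rho)^2}{\mathcal{I}_0(7\rho)}\mathcal{M}^- u \le \varepsilon_\delta$ for $\rho\le R$). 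This is precisely the uniformity the paper itself relies on tacitly, e.g.\ in \Cref{sec:Harnack} where \Cref{lem:L-eps} is applied at the smaller scale $\theta r/14$ without comment.

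Two small imprecisions worth flagging. First, your dyadic lemma can give $|A|\le c\,|B|$ with the \emph{same} $c$, not merely some $c'<1$: the parent/child measure ratio never enters the proof. Selecting the maximal stopping cubes $Q^*_i$, grouping them by their (pairwise disjoint or equal) parents $\widehat{Q}^*_i\subset B$, and using $\sum_{i:\widehat{Q}^*_i=\widehat{Q}}|A\cap Q^*_i|\le|A\cap\widehat{Q}|\le c|\widehat{Q}|$ gives $|A|\le c|B|$ directly; the volume doubling in \Cref{lem:VD} is used only for the Lebesgue differentiation step and for the base-case comparison $|B_{7R}|\asymp|Q_1|$. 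Second, the cubes in the tree have diameter $\le\mathrm{diam}(Q_1)<R$ by the choice of $k_R$, not ``$\le r_0$'' ($r_0$ from \eqref{eq:r_0} is a different, $\gamma$-dependent quantity and plays no role here). Neither affects the correctness of the argument.
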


\begin{corollary} [Weak Harnack inequality] \label{cor:WHI}
Assume $0 < s_0 \leq s < 1$. If $u \in C^2(B_{2R})$ is a nonnegative function satisfying $\mathcal{M}^- u \leq C_0$ in $B_{2R}$, then
\begin{equation*}
\left( \fint_{B_R} u^p \, \d \mu_{\Hn} \right)^{1/p} \leq C \left( \inf_{B_R} u + C_0 \frac{R^2}{\mathcal{I}_0(R)} \right),
\end{equation*}
where $p > 0$ and $C > 0$ are universal constants depending only on $n$, $\lambda$, $\Lambda$, $R$, and $s_0$.
\end{corollary}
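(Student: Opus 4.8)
The plan is to deduce \Cref{cor:WHI} from the $L^\varepsilon$-estimate \Cref{lem:L-eps} in the standard way: a layer-cake integration turns the super-level-set decay of \Cref{lem:L-eps} into an $L^p$ bound, while a covering of $B_R$ by hyperbolic dyadic cubes together with a Harnack-chain argument upgrades that bound—whose infimum initially lives only on a small ball—to an infimum over $B_R$. The real work is bookkeeping: in the absence of scaling every constant carries the hyperbolic factors $\cosh(CR)$, $\mathcal{H}(7R)$, $\mathcal{I}_0(R)$, $\mathcal{I}_\infty(R)$, and one must check these stay finite and depend only on $n,\lambda,\Lambda,R,\gamma_0$; this is exactly the role of the scale-function machinery of \Cref{sec:scale_functions}.

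\emph{Normalization and the estimate on a small ball.} We may assume $\inf_{B_R} u + C_0\frac{R^2}{\mathcal{I}_0(R)} > 0$, since otherwise one replaces $C_0$ by $C_0+\epsilon$, runs the argument, and lets $\epsilon\to0$. Put $r = R/7$, so that $B_{7r} = B_R \subset B_{2R}$, $u \in C^2(B_{7r})$, $\mathcal{M}^- u \le C_0$ in $B_{7r}$, and $\frac{(7r)^2}{\mathcal{I}_0(7r)} = \frac{R^2}{\mathcal{I}_0(R)}$. With
\[ K := \inf_{B_{\delta_1 r}} u + \frac{1}{\varepsilon_\delta}\,\frac{R^2}{\mathcal{I}_0(R)}\,C_0 , \]
the function $v := u/K$ is nonnegative on $\Hn$, satisfies $\frac{(7r)^2}{\mathcal{I}_0(7r)}\mathcal{M}^- v \le \varepsilon_\delta$ in $B_{7r}$, and $\inf_{B_{\delta_1 r}} v \le 1$; hence \Cref{lem:L-eps} applies to $v$ and gives $|\{v > t\}\cap Q_1| \le C t^{-\varepsilon}|Q_1|$ for every dyadic cube $Q_1$ of generation $k_r$ with $\inf_{x\in Q_1} d_\Hn(x,0) \le \delta_1 r$. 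Only finitely many such cubes occur, they are pairwise disjoint, their union covers $B_{\delta_1 r}$ up to a null set, and by \Cref{lem:VD} their total volume is at most $C|B_{\delta_1 r}|$; summing, $|\{v > t\}\cap B_{\delta_1 r}| \le C t^{-\varepsilon}|B_{\delta_1 r}|$. Fixing $p = \varepsilon/2$ and integrating $\int_{B_{\delta_1 r}} v^p \, \d\mu = p\int_0^\infty t^{p-1}|\{v>t\}\cap B_{\delta_1 r}|\,\d t$ yields $\fint_{B_{\delta_1 r}} v^p \, \d\mu \le C$, that is
\[ \Big( \fint_{B_{\delta_1 r}} u^p \, \d\mu \Big)^{1/p} \le C K . \]

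\emph{Covering and Harnack chain.} Since $\Hn$ is homogeneous and the constants in \Cref{lem:L-eps} depend only on radii, the estimate just obtained holds with $0$ replaced by any center $y$ with $B_{7r}(y)\subset B_{2R}$, i.e.\ any $y\in B_R$. Cover $\overline{B_R}$ by finitely many balls $B_{\delta_1 r/2}(y_i)$, $i=1,\dots,N$, arranged so that $y_1$ is a point with $u(y_1)=\inf_{B_R}u$ and each $B_{\delta_1 r/2}(y_{i+1})$ meets an earlier one; $N$ and the overlap multiplicity are bounded in terms of $n,\delta_1,R$ via \Cref{lem:VD}. A Harnack chain—at the $i$-th step one renormalizes by the infimum already controlled at a neighbouring $y_j$, applies \Cref{lem:L-eps} centered at $y_j$, and loses a bounded power of $M_\delta$—shows inductively that $\inf_{B_{\delta_1 r}(y_i)}u \le M_\delta^{N'}K_0$, where $N'$ is universal and $K_0 := C\big(\inf_{B_R}u + C_0\frac{R^2}{\mathcal{I}_0(R)}\big)$. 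Applying the small-ball estimate at each $y_i$ with $K$ replaced by $M_\delta^{N'}K_0$ and summing over $i$ (finite overlap) gives $\fint_{B_R} u^p \, \d\mu \le C K_0^p$, which is the assertion with $p=\varepsilon/2$.

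\emph{Main obstacle.} There is no conceptual difficulty beyond the Euclidean and nonnegatively-curved templates of \cite{Cab97,CS09,KKL21}; the delicate point is purely quantitative, and it is forced by the lack of scaling, which is also why a single rescaled application of the $L^\varepsilon$-estimate cannot be used and the Harnack chain is unavoidable. One must confirm that the number of dyadic cubes in the covering, the length $N'$ of the chain, and all doubling constants appearing in the sums remain finite and depend only on the stated parameters—this is where the monotonicity of the scale functions (e.g.\ \Cref{cor:decreasing} and \Cref{lem:I_infty_I_0}, already used in \Cref{lem:base} and \Cref{cor:barrier}) is invoked, to absorb the factors $\mathcal{I}_0$, $\mathcal{I}_\infty$, $\mathcal{H}(7R)$, $\cosh(CR)$ cleanly. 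The normalization by $K$ is designed so that the inhomogeneous term emerges in the scale-adapted form $C_0\frac{R^2}{\mathcal{I}_0(R)}$ rather than the Euclidean $C_0 R^{2\gamma}$, which by \Cref{lem:limit} is precisely what renders the constant uniform as $\gamma\to1$.
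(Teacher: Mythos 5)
Your proposal is correct and follows the route the paper itself points to: the paper's entire ``proof'' of \Cref{cor:WHI} is the citation to Cabr\'e's Theorem 8.1, which is exactly the layer-cake integration of the power-decay from the $L^\varepsilon$-estimate combined with a covering-and-chaining argument to pass from the small ball $B_{\delta_1 r}$ to $B_R$. Your normalization $v=u/K$ with $K=\inf_{B_{\delta_1 r}}u+\tfrac{1}{\varepsilon_\delta}\tfrac{R^2}{\mathcal{I}_0(R)}C_0$, the summation over the finitely many generation-$k_r$ cubes meeting $\overline{B}_{\delta_1 r}$, the choice $p=\varepsilon/2$, and the Harnack-chain transfer of the infimum are all the standard ingredients, and your observation that the chain length and overlap multiplicity depend only on $n$ and $R$ (via volume doubling) is the right way to account for the loss of scaling; so this matches the intended argument.
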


See, e.g., \cite[Theorem 8.1]{Cab97} for the proof of \Cref{cor:WHI}.

\section{Harnack inequality} \label{sec:Harnack}

The purpose of this section is to prove the Krylov–Safonov Harnack inequality by using \Cref{lem:L-eps}. A simple scaling argument will provide \Cref{thm:Harnack}.

\begin{theorem}
Assume $0 < s_0 \leq s < 1$. If a nonnegative function $u \in C^2(B_{7R})$ satisfies
\begin{equation*}
\frac{(7R)^2}{\mathcal{I}_0(7R)} \mathcal{M}^- u \leq \varepsilon_0 \quad\text{and}\quad \frac{(7R)^2}{\mathcal{I}_0(7R)} \mathcal{M}^+ u \geq -\varepsilon_0 \quad\text{in}~ B_{7R}
\end{equation*}
and $\inf_{B_{\delta_1 R}} u \leq 1$, then
\begin{equation*}
\sup_{B_{\delta_1 R/4}} u \leq C,
\end{equation*}
where $\varepsilon_0 > 0$ and $C > 0$ are universal constants depending only on $n$, $\lambda$, $\Lambda$, $R$, and $s_0$.
\end{theorem}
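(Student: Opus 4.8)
The plan is to carry out the Krylov--Safonov iteration in the spirit of Caffarelli--Silvestre and Cabr\'e, playing the supersolution bound off against the subsolution bound. On the supersolution side, \Cref{cor:WHI} is directly available: the hypothesis $\tfrac{(7R)^2}{\mathcal{I}_0(7R)}\mathcal{M}^-u\le\varepsilon_0$ on $B_{7R}\supset B_{2R}$ reads $\mathcal{M}^-u\le C_0$ with $C_0=\varepsilon_0\,\mathcal{I}_0(7R)/(7R)^2$, while $\inf_{B_R}u\le\inf_{B_{\delta_1R}}u\le1$; since $C_0R^2/\mathcal{I}_0(R)=\varepsilon_0\,\tfrac{\mathcal{I}_0(7R)/(7R)^2}{\mathcal{I}_0(R)/R^2}\le\varepsilon_0$ by \Cref{cor:decreasing}, \Cref{cor:WHI} gives $\fint_{B_R}u^p\,\d\mu\le C$, whence Chebyshev yields the distributional bound $|\{u>t\}\cap B_R|\le C t^{-\varepsilon}|B_R|$ for all $t>0$, with $\varepsilon=p$ universal. (Equivalently one may invoke \Cref{lem:L-eps} on the generation-$k_R$ dyadic cubes and cover.) On the subsolution side I would exploit $\tfrac{(7R)^2}{\mathcal{I}_0(7R)}\mathcal{M}^+u\ge-\varepsilon_0$ through a barrier of the type produced in \Cref{cor:barrier}. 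The whole proof is by contradiction: if $\sup_{B_{\delta_1R/4}}u$ exceeded a universal constant, I would manufacture a sequence of points along which $u\to+\infty$ while they stay in a fixed compact subset of $B_{7R}$, contradicting $u\in C^2(B_{7R})$.

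The core is a \emph{point estimate}: there are universal $M>1$, $\alpha>0$, $\sigma_0\ge1$ (depending only on $n,\lambda,\Lambda,R,\gamma_0$) such that, under the hypotheses of the theorem, if $u(x_0)\ge\sigma\ge\sigma_0$ for some $x_0\in B_{\delta_1R/2}$ then there is a point $x_1$ with $d_\Hn(x_0,x_1)\le C\sigma^{-\alpha}$ and $u(x_1)\ge M\sigma$. To prove it, note first that the distributional bound gives $|\{u>\sigma/2\}\cap B_R|\le C\sigma^{-\varepsilon}$; choosing $\rho=\rho(\sigma)$ by $|B_\rho(x_0)|=2C\sigma^{-\varepsilon}$ and using \Cref{lem:VD} together with $|B_\rho|\asymp\rho^n$ for small $\rho$, one gets $\rho\asymp\sigma^{-\varepsilon/n}$, so we may take $\alpha=\varepsilon/n$ and, with this choice, $u\le\sigma/2$ on at least half of $B_\rho(x_0)$ in measure. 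Now suppose, toward a contradiction, that $u<M\sigma$ on the whole of $B_{2\rho}(x_0)$. Comparing $u$ with a barrier centered at $x_0$ and adapted to the radius $2\rho(\sigma)$, built exactly as in \Cref{cor:barrier} --- the point being that, since $\rho(\sigma)\to0$, the kernel is there asymptotically scale covariant, $\mathcal{K}_\gamma(\rho)\sim C(3,\gamma)\rho^{-3-2\gamma}$ by \Cref{thm:kernel} and \Cref{lem:asymptotic}, so the construction is effectively the Euclidean one and, crucially, all its constants stay uniform as $\gamma\to1$ because $C(3,\gamma)\asymp1-\gamma$ --- and using that $u\ge0$ on all of $\Hn$ (for the tail), $u(x_0)=\sigma$, $u\le\sigma/2$ on half of $B_\rho(x_0)$, and $u\le M\sigma$ on $B_{2\rho}(x_0)$, the comparison forces $\tfrac{(7R)^2}{\mathcal{I}_0(7R)}\mathcal{M}^+u(x_0)<-\varepsilon_0$, contradicting the subsolution hypothesis provided $M$ is large and $\varepsilon_0$ small (both universal). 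The scale-function monotonicities of \Cref{lem:decreasing}, \Cref{cor:decreasing}, and \Cref{lem:I_infty_I_0} are what let us compare the normalizing weights $\mathcal{I}_0(\cdot)/(\cdot)^2$ and $\mathcal{I}_\infty(\cdot)$ at the two scales $R$ and $\rho(\sigma)$ in the absence of a scaling law.

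Granting the point estimate, the endgame is standard. Suppose $\sup_{B_{\delta_1R/4}}u\ge\sigma_0+1$ and pick $x_0\in B_{\delta_1R/4}$ with $u(x_0)\ge\sigma_0$. Iterating yields points $x_k$ with $u(x_{k+1})\ge M\,u(x_k)\ge M^{k+1}u(x_0)$ and $d_\Hn(x_k,x_{k+1})\le C\bigl(M^ku(x_0)\bigr)^{-\alpha}=C\,u(x_0)^{-\alpha}M^{-k\alpha}$, so $\sum_kd_\Hn(x_k,x_{k+1})\le C\,u(x_0)^{-\alpha}/(1-M^{-\alpha})$; taking $\sigma_0$ large enough that this is $<\delta_1R/8$ keeps all $x_k$ in $B_{\delta_1R/2}\Subset B_{7R}$ (so the point estimate keeps applying) and forces $x_k\to x_\infty\in\overline{B_{\delta_1R/2}}$. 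Since $u(x_k)\ge M^ku(x_0)\to\infty$ while $u$ is continuous at $x_\infty$, this is absurd; hence $\sup_{B_{\delta_1R/4}}u<\sigma_0+1=:C$, a universal constant, which is the claim. (Deducing \Cref{thm:Harnack} is then the announced elementary rescaling: divide $u$ by $\inf_{B_{\delta_1R}}u+C_0(7R)^2/(\varepsilon_0\mathcal{I}_0(7R))$ and chain finitely many balls of radius $\delta_1R/4$ across $B_{7R}$.)

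The principal obstacle is precisely the step where the point estimate has to be run: its natural scale $\rho(\sigma)$ tends to $0$ as $\sigma\to\infty$, and in $\Hn$ one cannot rescale $B_R$ onto $B_{\rho(\sigma)}$ as in the Euclidean case. Two inputs rescue the argument --- the near-$0$ scale covariance of $\mathcal{K}_\gamma$, which makes small-scale estimates effectively Euclidean and, through the exact normalizing constant of \Cref{thm:kernel} with $C(3,\gamma)\asymp1-\gamma$, keeps all constants uniform up to $\gamma=1$; and the monotonicity of the scale functions (\Cref{lem:decreasing}, \Cref{cor:decreasing}, \Cref{lem:I_infty_I_0}), which substitutes for the missing scaling relation when $\mathcal{I}_0,\mathcal{I}_\infty$ must be compared across scales. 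A secondary, purely technical point is the non-homogeneity of volumes, controlled by \Cref{lem:VD}; it enters both in converting the measure bound $|\{u>\sigma/2\}|\le C\sigma^{-\varepsilon}$ into the radius $\rho(\sigma)$ and in the covering inside the barrier comparison.
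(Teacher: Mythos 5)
You take a genuinely different route from the paper: the paper runs the Caffarelli--Silvestre ``minimal $t$'' scheme (find the smallest $t$ with $u\le h_t$ on $B_{\delta_1R}$, locate the touching point $x_0$, and derive a measure contradiction there via two applications of \Cref{lem:L-eps}), while you propose a pointwise blow-up estimate iterated to a divergent sequence of points. Both are classical mechanisms for Krylov--Safonov, but your sketch has a genuine gap at precisely the step the paper spends the most technical effort on. When you pass to the subsolution side around $x_0$ (whether you phrase it as a ``barrier comparison'' or, as you would really need, an application of \Cref{lem:L-eps} to the truncated function $w=(M\sigma-u)^+$), the truncation produces a tail error $\mathcal{M}^+w\le-\mathcal{M}^+u+\mathcal{M}^+w^-$ with $\mathcal{M}^+w^-(x)\le\Lambda\int_{\Hn}(u(z)-M\sigma)^+\mathcal{K}_\gamma(d_\Hn(z,x))\,\d\mu(z)$, and the assumption $u\ge0$ gives \emph{no} upper bound on this: the positive far-field contribution of a subsolution can a priori be arbitrarily large. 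The paper controls it by touching $u$ from below with $g_\beta=\beta(1-d^2_\Hn(\cdot,0)/R^2)^+$, reading off from $\mathcal{M}^-u\le\varepsilon_0\,\mathcal{I}_0(7R)/(7R)^2$ at the touching point $x_1$ the bound $\int_{\Hn}(u-c)^+\mathcal{K}_\gamma(d_\Hn(\cdot,x_1))\,\d\mu\le C\mathcal{H}(7R)\,\mathcal{I}_0(7R)/(7R)^2$, and then transferring this to the needed base point via Laforgia's kernel-ratio inequalities $\mathcal{K}_\gamma(d)/\mathcal{K}_\gamma(d_1)\le M$, uniformly in $\gamma$. Your proposal omits this entirely, and without it the truncation step is unjustified.

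Two secondary imprecisions compound the above. First, the claim that the stated data ``forces $\tfrac{(7R)^2}{\mathcal{I}_0(7R)}\mathcal{M}^+u(x_0)<-\varepsilon_0$'' does not hold: with $u(x_0)=\sigma$, $u\le\sigma/2$ on a half of $B_\rho(x_0)$ and $u<M\sigma$ on $B_{2\rho}(x_0)$, the second difference on the bad half satisfies only $\delta(u,x_0,z)=\tfrac12(u(z)+u(\tilde{z})-2\sigma)\le(M-3/2)\sigma/2$, which is already nonnegative once $M\ge3/2$, and the antipodal and far-field contributions are exactly what the missing tail bound is needed for. The correct mechanism is a measure contradiction (a lower bound on $|\{u\le\sigma/2\}\cap B_\rho|$ against \Cref{lem:L-eps} applied to the normalized cutoff), not a pointwise estimate of $\mathcal{M}^+u(x_0)$. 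Second, the appeal to ``effectively Euclidean'' small-scale behavior of $\mathcal{K}_\gamma$ is a heuristic, not how the paper resolves the scale mismatch: the paper never re-proves the ABP or barrier at the shrinking scale $\rho(\sigma)$. It applies the $L^\varepsilon$-estimate at a smaller generation of the fixed Christ dyadic system and absorbs the mismatch of normalizing factors purely through the monotonicity of $\mathcal{I}_0(\cdot)/(\cdot)^2$ and $\mathcal{I}_0(\cdot)/(\cdot)^{2-\gamma}$ (\Cref{cor:decreasing}, \Cref{lem:decreasing}). Your plan would require a uniform-in-scale regularity package whose construction the paper deliberately avoids, and this is not addressed.
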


\begin{proof}
Let $\varepsilon$ and $\varepsilon_\delta$ be the constants given in \Cref{lem:L-eps}, and let $t > 0$ be the minimal value such that the following holds:
\begin{equation*}
u(x) \leq h_t(x) := t \bigg( \frac{1}{\mathcal{D}} \bigg( 1 - \frac{d_\Hn(x,z_0)}{\delta_1R} \bigg)^{\log_2\mathcal{D}}\bigg)^{-1/\varepsilon} \quad \text{for all} ~ x \in B_{\delta_1 R},
\end{equation*}
where for $\mathcal{D} = 2^n \cosh^{n-1} (2\delta_0 R)$. Since $\sup_{B_{\delta_1 R/4}} u \leq t \mathcal{D}^{-\frac{1}{\varepsilon} \log_2(3/8)}$, we can conclude the theorem once we show that $t \leq C$ for some universal constant $C$.

Let $x_0 \in B_{\delta_1 R}$ be a point such that $u(x_0) = h_t(x_0)$. Let $d = \delta_1 R - d_{\Hn}(x_0, 0)$, $r = d/2$, and $A = \lbrace u > u(x_0)/2 \rbrace$, then we have
\begin{equation*}
u(x_0) = h_t(x_0) = t \mathcal{D}^{1/\varepsilon} \left( \frac{2r}{\delta_1 R} \right)^{-\frac{1}{\varepsilon} \log_2 \mathcal{D}}.
\end{equation*}
We apply \Cref{lem:L-eps} to $u$ to obtain
\begin{equation*}
|A \cap Q_1| \leq C \left( \frac{u(x_0)}{2} \right)^{-\varepsilon} |Q_1| \leq C t^{-\varepsilon} \frac{1}{\mathcal{D}} \left( \frac{r}{R} \right)^{\log_2 \mathcal{D}} |Q_1|,
\end{equation*}
where $Q_1$ is the unique dyadic cube of generation $k_R$ that contains the point $x_0$.

We will show that there is a small constant $\theta > 0$ such that
\begin{equation} \label{eq:ball_theta}
|A^c \cap Q_2| \leq \frac{1}{2} |Q_2|,
\end{equation}
where $Q_2 \subset Q_1$ is the dyadic cube of generation $k_{\theta r/14}$ containing the point $x_0$, provided that $t$ is large. However, when $t$ is sufficiently large, we also have
\begin{equation*}
|A \cap Q_2| \leq |A \cap Q_1| \leq \frac{C}{t^\varepsilon \mathcal{D}} \left( \frac{r}{R} \right)^{\log_2 \mathcal{D}} | B(z, c_2 \delta_0^{k_R}) | \leq \frac{C}{t^\varepsilon} | B(z, c_1 \delta_0^{k_{r\theta/14}}) | \leq \frac{C}{t^\varepsilon} | Q_2 | < \frac{1}{2}|Q_2|,
\end{equation*}
where $B(z, c_1 \delta_0^{k_{\theta r/14}})$ is a ball contained in $Q_2$. This contradicts to \eqref{eq:ball_theta} and will lead us to a conclusion that $t$ is uniformly bounded.

Let us now focus on proving \eqref{eq:ball_theta}. For every $x \in B(x_0, \theta r)$, we have
\begin{equation*}
u(x) \leq h_t(x) \leq t \left( \frac{1}{\mathcal{D}} \left( \frac{d-\theta r}{\delta_1 R} \right)^{\log_2\mathcal{D}}\right)^{-1/\varepsilon} = \left( 1- \frac{\theta}{2} \right)^{-\frac{1}{\varepsilon} \log_2 \mathcal{D}} u(x_0).
\end{equation*}
We define a function
\begin{equation*}
v(x) := \left( 1- \frac{\theta}{2} \right)^{-\frac{1}{\varepsilon} \log_2 \mathcal{D}} u(x_0) - u(x).
\end{equation*}
Since we will apply \Cref{lem:L-eps}, we need a function which is nonnegative on the whole space. Thus, we apply \Cref{lem:L-eps} to $w:= v^+$ in $B(x_0, 7(\theta r/14))$. For $x \in B(x_0, 7(\theta r/14))$, we have
\begin{align} \label{eq:Mw}
\begin{split}
\mathcal{M}^- w(x)
&\leq \mathcal{M}^- v(x) + \mathcal{M}^+ v_-(x) \\
&\leq - \mathcal{M}^+ u(x) + \Lambda \int_{\Hn \setminus B(x_0, \theta r)} v^-(z) \mathcal{K}_{s} (d_\Hn(z, x)) \, \d \mu_{\Hn}(z) \\
&\leq \frac{\mathcal{I}_0(7R)}{(7R)^2} \varepsilon_0 + \Lambda \int_{\Hn \setminus B(x_0, \theta r)} (u(z) - (1-\theta/2)^{-\frac{\log_2 \mathcal{D}}{\varepsilon} } u(x_0))^+ \mathcal{K}_s(d_{\Hn}(z, x)) \, \d \mu_{\Hn}(z).
\end{split}
\end{align}
To compute the last integral in \eqref{eq:Mw}, we introduce another auxiliary function
\begin{equation*}
g_\beta(x) := \beta \left( 1- \frac{d_{\Hn}(x, 0)^2}{R^2} \right)^+,
\end{equation*}
with the largest number $\beta > 0$ satisfying $u \geq g_\beta$. From the assumption $\inf_{B_{\delta_1 R}} u \leq 1$, we have $(1-\delta_1^2) \beta \leq 1$. Let $x_1 \in B_R$ be a point where $u(x_1) = g_\beta(x_1)$. Since
\begin{align*}
\int_\Hn \delta^-(u, x_1, z) \mathcal{K}_s(d_\Hn(z, x_1)) \, \d \mu_{\Hn}(z) 
&\leq \int_\Hn \delta^-(g_\beta, x_1, z) \mathcal{K}_s(d_\Hn(z, x_1)) \, \d \mu_{\Hn}(z) \\
&\leq C \mathcal{H}(7R) \frac{\mathcal{I}_0(7R)}{(7R)^2},
\end{align*}
we obtain that
\begin{align*}
\varepsilon_0
&\geq \frac{(7R)^2}{\mathcal{I}_0(7R)} \mathcal{M}^- u(x_1) \\
&\geq \lambda \frac{(7R)^2}{\mathcal{I}_0(7R)} \int_\Hn \delta^+(u,x_1, z) \mathcal{K}_s(d_\Hn(z, x_1)) \, \d \mu_{\Hn}(z) - C\Lambda \mathcal{H}(7R).
\end{align*}
Thus, we have
\begin{align} \label{eq:u-c}
\begin{split}
&\int_\Hn (u(z) - c)^+ \mathcal{K}_s(d_\Hn(z, x_1)) \, \d \mu_{\Hn}(z) \\
&\leq \int_\Hn \delta^+(u,x_1, z) \mathcal{K}_s(d_\Hn(z, x_1)) \, \d \mu_{\Hn}(z) \leq C \mathcal{H}(7R) \frac{\mathcal{I}_0(7R)}{(7R)^2},
\end{split}
\end{align}
where $c:= 1/(1-\delta_1^2)$.

If $u(x_0) \leq c$, then we find an upper bound $t = u(x_0) (\delta_1 R / d)^{-\frac{1}{\varepsilon} \log_2 \mathcal{D}} \leq c \delta_1^{-\frac{1}{\varepsilon} \log_2 \mathcal{D}}$, which finishes the proof. Otherwise, it follows from \eqref{eq:Mw} and \eqref{eq:u-c} that
\begin{align*}
\mathcal{M}^- w(x)
&\leq \frac{\mathcal{I}_0(7R)}{(7R)^2} \varepsilon_0 + \Lambda \int_{\Hn \setminus B(x_0, \theta r)} (u(z) - c)_+ \mathcal{K}_s(d_\Hn(z, x)) \, \d \mu_{\Hn}(z) \\
&\leq \frac{\mathcal{I}_0(7R)}{(7R)^2} \varepsilon_0 + \Lambda M \int_{\Hn \setminus B(x_0, \theta r)} (u(z) - c)_+ \mathcal{K}_s(d_\Hn(z, x_1)) \, \d \mu_{\Hn}(z) \\
&\leq \frac{\mathcal{I}_0(7R)}{(7R)^2} \varepsilon_0 + C M \mathcal{H}(7R)\frac{\mathcal{I}_0(7R)}{(7R)^2},
\end{align*}
where
\begin{equation*}
M = \sup \left\lbrace \frac{\mathcal{K}_s(d_\Hn(z, x))}{\mathcal{K}_s(d_\Hn(z, x_1))}: x \in B(x_0, \theta r/2), x_1 \in B_R, z \in \Hn \setminus B(x_0, \theta r) \right\rbrace.
\end{equation*}
Let $d=d_\Hn(z,x)$ and $d_1=d_\Hn(z,x_1)$ for the sake of brevity. We recall from \Cref{lem:kernel-asymp} that the kernel $\mathcal{K}_{s}$ is comparable with the function
\begin{align*}
R^{s-\frac{1}{2}} \sinh^{-\frac{n-1}{2}}(R) K_{\frac{n}{2} + s} \left(\frac{n-1}{2}R \right).
\end{align*}
If $d \geq d_1$, then by \cite[Chapter 4]{Laf91} we have
\begin{equation*}
  \frac{\mathcal{K}_s(d)}{\mathcal{K}_s(d_1)} \leq C \left( \frac{d_1}{d} \right)^{1/2+s} \left( \frac{\sinh d_1}{\sinh d} \right)^{\frac{n-1}{2}} \frac{K_{n/2+s}(\frac{n-1}{2}d)}{K_{n/2+s}(\frac{n-1}{2}d_1)} \leq C \left( \frac{\sinh d_1}{\sinh d} \right)^{\frac{n-1}{2}} e^{\frac{n-1}{2}(d_1-d)} \leq C.
\end{equation*}
If $d < d_1$, by \cite[Theorem 3.1]{Laf91} we have
\begin{equation*}
  \frac{\mathcal{K}_s(d)}{\mathcal{K}_s(d_1)} \leq C
  \left(\frac{d_1}{d} \right)^{\frac{n+1}{2} + 2s } \left( \frac{\sinh d_1}{\sinh d} \right)^{\frac{n-1}{2}} e^{\frac{n-1}{2}(d_1-d)} < \left(\frac{d_1}{d} \right)^{\frac{n+5}{2}} \left( \frac{\sinh d_1}{\sinh d} \right)^{\frac{n-1}{2}} e^{\frac{n-1}{2}(d_1-d)}.
\end{equation*}
Since $d_1-d \le d(x,x')$, we can bound $\mathcal{K}_s(d)/\mathcal{K}_s(d_1)$ by a constant depending on $R$. Thus, for any cases the ratio $\mathcal{K}_s(d)/\mathcal{K}_s(d_1)$ is bounded by a universal constant depending on $R$ which is independent of $s$. By using \Cref{lem:decreasing}, we arrive at
\begin{equation*}
\frac{(\theta r/2)^2}{\mathcal{I}_0(\theta r/2)} \mathcal{M}^-_{\mathcal{L}_0} w \leq C \frac{\mathcal{I}_0(R)/R^2}{\mathcal{I}_0(\theta r/2)/(\theta r /2)^2} \mathcal{H}(7R) \leq C
\end{equation*}
in $B(x_0, 7(\theta r/14))$.

Let $Q_2 \subset Q_1$ be the dyadic cube of generation $k_{\theta r /14}$ containing the point $x_0$. Then by \Cref{lem:L-eps}, we have
\begin{align*}
| \lbrace u < u(x_0)/2 \rbrace \cap Q_2 |
&= | \lbrace w > \left( \left( 1-\theta/2 \right)^{-s} - 1/2 \right) u(x_0) \rbrace \cap Q_2 | \\
&\leq \frac{C | Q_2 |}{\left( \left( 1-\theta/2 \right)^{-s} - 1/2 \right)^\varepsilon u(x_0)^\varepsilon} \left( \inf_{B(x_0, \delta_1 \theta r/14)} w + C \right)^\varepsilon.
\end{align*}
We can make the quantity $(1-\theta/2)^{-s} - 1/2$ bounded away from 0 by taking $\theta > 0$ sufficiently small. Recalling that $w(x_0) = ((1-\theta/2)^{-s}-1) u(x_0)$, we obtain
\begin{equation*}
| \lbrace u < u(x_0)/2 \rbrace \cap Q_2 | \leq C | Q_2 | \left( ((1-\theta/2)^{-s}-1)^\varepsilon + \left( \frac{C}{u(x_0)} \right)^\varepsilon \right).
\end{equation*}
We choose a constant $\theta > 0$ sufficiently small so that
\begin{equation*}
C \left((1 - \theta/2)^{-s} - 1 \right)^\varepsilon \leq \frac{1}{4}.
\end{equation*}
If $t > 0$ is sufficiently large so that $C (C/u(x_0))^\varepsilon < 1/4$, then we arrive at \eqref{eq:ball_theta}. Therefore, $t$ is uniformly bounded and the desired result follows.
\end{proof}

\section{H\"older estimates} \label{sec:Holder}

In this section, the following H\"older regularity result is proved. \Cref{thm:Holder} follows from simple scaling and covering arguments.

\begin{lemma} \label{lem:Holder_0}
Assume $0 < s_0 \leq s < 1$. There is a universal constant $\varepsilon_0$ such that if $u \in C^2(B_{7R})$ is a function such that $|u| \leq \frac{1}{2}$ in $B_{7R}$ and
\begin{equation*}
\frac{(7R)^2}{\mathcal{I}_0(7R)} \mathcal{M}^+ u \geq - \varepsilon_0 \quad \text{and} \quad \frac{(7R)^2}{\mathcal{I}_0(7R)} \mathcal{M}^- u \leq \varepsilon_0 \quad \text{in} ~ B_{7R},
\end{equation*}
then $u \in C^\alpha$ at $0 \in \Hn$ with an estimate
\begin{equation*}
|u(x) - u(0)| \leq CR^{-\alpha} d_\Hn(x, 0)^\alpha,
\end{equation*}
where $\alpha \in (0, 1)$ and $C > 0$ are universal constants depending only on $n$, $\lambda$, $\Lambda$, $R$, and $s_0$.
\end{lemma}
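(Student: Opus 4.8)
The plan is the Krylov--Safonov / Caffarelli--Silvestre oscillation-decay scheme, using the monotonicity properties of \Cref{sec:scale_functions} to make up for the absence of scaling. Fix a ratio $\eta\in(0,1)$ (to be chosen) and put $R_k:=\eta^kR$; the goal is a universal $\mu\in(0,1)$ together with a nondecreasing sequence $(a_k)$ and a nonincreasing sequence $(b_k)$ such that $a_k\le u\le b_k$ in $B_{7R_k}(0)$ and $b_k-a_k\le(1-\mu)^k$, starting from $a_0=-\tfrac12$, $b_0=\tfrac12$ (using $|u|\le\tfrac12$). Granting this, for $0<r\le7R$ pick $k$ with $7R_{k+1}<r\le7R_k$ to get $\operatorname{osc}_{B_r(0)}u\le b_k-a_k\le(1-\mu)^k\le C(r/R)^\alpha$ with $\alpha:=\log(1-\mu)/\log\eta\in(0,1)$; since $u(x)$ and $u(0)$ lie in $[\inf_{B_r(0)}u,\ \sup_{B_r(0)}u]$ when $d_\Hn(x,0)\le r$, the estimate follows (it is trivial for $d_\Hn(x,0)$ bounded away from $0$).

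For the inductive step, suppose $a_j,b_j$ are built for $j\le k$. Since $u(0)\in[a_k,b_k]$, one of $\{u\ge\tfrac{a_k+b_k}{2}\}$, $\{u\le\tfrac{a_k+b_k}{2}\}$ covers at least half of $B_{\delta_1R_k/2}$; say the former (the other case is symmetric, exchanging $\mathcal{M}^-\leftrightarrow\mathcal{M}^+$ and $u\leftrightarrow-u$). Put $v:=\tfrac{2}{b_k-a_k}(u-a_k)\ge0$ in $B_{7R_k}$, so $|\{v\ge1\}\cap B_{\delta_1R_k/2}|\ge\tfrac12|B_{\delta_1R_k/2}|$. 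One applies the weak Harnack inequality \Cref{cor:WHI} (resting on the $L^\varepsilon$-estimate of \Cref{sec:L_eps_estimate}) at scale $R_k$ to the globally nonnegative $v^+$, obtaining $\inf_{B_{\delta_1R_k/2}}v^+\ge\theta-E_k$ for a universal gain $\theta>0$ and an error $E_k$; if $E_k<\theta/2$ and $\eta$ is small enough that $B_{7R_{k+1}}\subset B_{\delta_1R_k/2}$, then $u-a_k\ge\tfrac\theta4(b_k-a_k)$ on $B_{7R_{k+1}}$, so $a_{k+1}:=a_k+\mu(b_k-a_k)$, $b_{k+1}:=b_k$ close the induction provided $\mu\le\theta/4$. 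Since constants are annihilated by every $L\in\mathcal{L}_0$, $\mathcal{M}^-(u-a_k)=\mathcal{M}^-u$, and $\mathcal{M}^-v^+\le\tfrac{2}{b_k-a_k}\mathcal{M}^-u+\mathcal{M}^+(v^-)$. In the units $\tfrac{R_k^2}{\mathcal{I}_0(R_k)}(\cdot)$ appropriate to \Cref{cor:WHI} at scale $R_k$, \Cref{cor:decreasing} together with \Cref{lem:decreasing} yield $\tfrac{2}{b_k-a_k}\tfrac{R_k^2}{\mathcal{I}_0(R_k)}\le2\big((1-\mu)^{-1}\eta^{\gamma}\big)^k\tfrac{R^2}{\mathcal{I}_0(R)}\le2\tfrac{R^2}{\mathcal{I}_0(R)}$ once $\eta^{\gamma_0}\le1-\mu$, so the first term contributes $\lesssim\varepsilon_0$ to $E_k$; this is precisely how the renormalization factor $(1-\mu)^{-k}$ is absorbed by the growth $\mathcal{I}_0(R_k)/R_k^2\sim cR_k^{-2\gamma}$ in the absence of scaling.

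The genuine obstacle is the tail $\mathcal{M}^+(v^-)$, since $v^-$ lives on $\Hn\setminus B_{7R_k}$. On the hyperbolic dyadic shell $B_{7R_{k-m}}(0)\setminus B_{7R_{k-m+1}}(0)$ one has $u\ge a_{k-m}$, and tracking the drift of $a_\bullet$ gives $v^-\le2\big((1-\mu)^{-m}-1\big)\le Cm\mu(1-\mu)^{-(m-1)}$, while outside $B_{7R}$ it is $\le(1-\mu)^{-k}$ (reading $|u|\le\tfrac12$ globally, as in the reduction from \Cref{thm:Holder}). Bounding each shell integral by $\int_{\Hn\setminus B_{7R_{k-m+1}}}\mathcal{K}_\gamma(d_\Hn(z,x))\,\d\mu(z)\asymp\mathcal{I}_\infty(7R_{k-m+1})/(7R_{k-m+1})^2$, then using \Cref{lem:I_infty_I_0} to replace this by $\tfrac{1-\gamma}\gamma\mathcal{H}(7R)\,\mathcal{I}_0(7R_{k-m+1})/(7R_{k-m+1})^2$ with a prefactor universal for $\gamma\ge\gamma_0$ and $R$ fixed, and comparing $\mathcal{I}_0$ across scales once more with \Cref{lem:decreasing}, the normalized tail is dominated by $C\sum_{m\ge1}m\mu(1-\mu)^{-(m-1)}\eta^{(m-1)\gamma}=C\mu\big(1-(1-\mu)^{-1}\eta^{\gamma}\big)^{-2}\le C'\mu$ once $(1-\mu)^{-1}\eta^{\gamma_0}\le\tfrac12$, plus a term $\le C''\eta^{\gamma_0}$ from outside $B_{7R}$. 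Thus $E_k\le C'\mu+C''\eta^{\gamma_0}+C\varepsilon_0$, and one closes the scheme by choosing, in order, $\theta$ (universal), a small universal $\mu$ with $\mu\le\theta/4$ and $C'\mu<\theta/6$, then $\eta$ small (so $\eta^{\gamma_0}\le1-\mu$, $(1-\mu)^{-1}\eta^{\gamma_0}\le\tfrac12$, $C''\eta^{\gamma_0}<\theta/6$, and $B_{7R_{k+1}}\subset B_{\delta_1R_k/2}$), and finally $\varepsilon_0$ with $C\varepsilon_0<\theta/6$; then $E_k<\theta/2$ and $\alpha=\log(1-\mu)/\log\eta$. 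The hard part is exactly this coupled calibration of $\theta$, $\mu$, $\eta$, $\varepsilon_0$, $\alpha$: the exponential decay of $\mathcal{K}_\gamma$ from \Cref{thm:kernel}, seemingly adverse, is in fact what --- against the exponential volume growth of hyperbolic balls, and quantified by \Cref{lem:I_infty_I_0} --- makes the tail geometrically summable; the remainder is the standard oscillation reduction.
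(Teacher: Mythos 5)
Your proof is correct, and it is the same Krylov--Safonov oscillation-decay scheme the paper runs: a geometric sequence of balls $B_{R_k}$, a normalized function $v=\tfrac{2(u-a_k)}{b_k-a_k}$, the inequality $\mathcal{M}^-v^+\le\mathcal{M}^-v+\mathcal{M}^+v^-$, an application of the $L^\varepsilon$/weak Harnack estimate of \Cref{sec:L_eps_estimate} to $w:=v^+$, and the monotonicity of $\mathcal{I}_0(R)/R^{2-\gamma}$ from \Cref{lem:decreasing} to absorb the renormalization factor $(b_k-a_k)^{-1}$ into the scale change, exactly as the paper does with $\tfrac{R_{k+1}^2}{\mathcal{I}_0(R_{k+1})}\tfrac{\mathcal{I}_0(7R)}{(7R)^2}\le 4^{-(k+1)\gamma}$. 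Where the two arguments genuinely diverge is the treatment of the nonlocal tail $\mathcal{M}^+v^-$. The paper encodes the previous oscillation bounds in the pointwise estimate $v(z)\ge-2\big((d_\Hn(z,0)/R_k)^\alpha-1\big)$ for $z\in\Hn\setminus B_k$, pulls $\tfrac{R_{k+1}^2}{\mathcal{I}_0(R_{k+1})}$ into the integral, and then invokes dominated convergence to make that integral $\to0$ as $\alpha\to0$; since the resulting smallness threshold $\alpha_\gamma$ depends on $\gamma$, it must then appeal to continuity in $(\alpha,\gamma)$ and compactness of $[\gamma_0,1]$ to take $\alpha=\min_{\gamma}\alpha_\gamma>0$. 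You instead decompose the tail into the ``hyperbolic dyadic'' shells $B_{7R_{k-m}}\setminus B_{7R_{k-m+1}}$, bound $v^-$ on each shell by tracking the drift of $a_\bullet$, and sum geometrically using \Cref{lem:decreasing} to get the factor $\eta^{m\gamma}$ and \Cref{lem:I_infty_I_0} to convert $\mathcal{I}_\infty$ into $\mathcal{I}_0$ with the bounded prefactor $\tfrac{1-\gamma}{\gamma}\mathcal{H}(7R)$. This buys a fully explicit bound $E_k\lesssim\mu+\eta^{\gamma_0}+\varepsilon_0$ and an explicit $\alpha=\log(1-\mu)/\log\eta$, with no DCT and no compactness-in-$\gamma$ step; the price is the more delicate bookkeeping of the coupled choices of $\theta,\mu,\eta,\varepsilon_0$, which you do carry out correctly. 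Two small points: the constants in \Cref{lem:L-eps}/\Cref{cor:WHI} nominally depend on the scale (as they do in the paper), so applying them at scales $R_k\le R$ requires the observation -- implicit in both proofs -- that these constants are monotone in the radius and hence uniform for radii $\le R$; and your citation of \Cref{cor:decreasing} for the renormalization step should really be \Cref{lem:decreasing} (the power $R^{2-\gamma}$, not $R^2$, is what produces the factor $\eta^{k\gamma}$ you need). Neither affects the substance.
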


\begin{proof}
Let $R_k := 7 \cdot 4^{-k} R$ and $B_k := B_{R_k}$. It suffices to construct an increasing sequence $\lbrace m_k \rbrace_{k \geq 0}$ and a decreasing sequence $\lbrace M_k \rbrace_{k \geq 0}$ such that $m_k \leq u \leq M_k$ in $B_k$ and $M_k - m_k = 4^{-\alpha k}$. We initially choose $m_0 = - 1/2$ and $M_0 = 1/2$ for the case $k = 0$. Let us assume that we have sequences up to $m_k$ and $M_k$ and find $m_{k+1}$ and $M_{k+1}$.

For $x \in B_{2R_{k+1}}$, let $Q_1$ be a dyadic cube of generation $k_{R_{k+1}/7}$. In $Q_1$, either $u > (M_k + m_k)/2$ or $u \leq (M_k + m_k)/2$ in at least half of the points in measure. We assume
\begin{equation} \label{eq:Q_1_half}
|\lbrace u > (M_k + m_k)/2 \rbrace \cap Q_1| \geq \frac{1}{2} |Q_1|.
\end{equation}
A function defined by
\begin{equation*}
v(x) := \frac{u(x) - m_k}{(M_k - m_k)/2}
\end{equation*}
satisfies $v \geq 0$ in $B_k$ by the induction hypothesis. To apply \Cref{lem:L-eps}, let us consider a function $w := v^+$, which satisfies
\begin{equation} \label{eq:Q_1_half_w}
|\lbrace w > 1 \rbrace \cap Q_1| \geq \frac{1}{2}|Q_1|
\end{equation}
by \eqref{eq:Q_1_half}. Since $\frac{(7R)^2}{\mathcal{I}_0(7R)} \mathcal{M}^- v \leq 2\varepsilon_0 / (M_k-m_k)$ in $B_{7R}$, we have
\begin{align*}
\frac{R_{k+1}^2}{\mathcal{I}_0(R_{k+1})} \mathcal{M}^- w
&\leq \frac{R_{k+1}^2}{\mathcal{I}_0(R_{k+1})} (\mathcal{M}^- v + \mathcal{M}^+ v^-) \\
&\leq \frac{2\varepsilon_0}{M_k - m_k} \frac{R_{k+1}^2}{\mathcal{I}_0(R_{k+1})} \frac{\mathcal{I}_0(7R)}{(7R)^2} + \frac{R_{k+1}^2}{\mathcal{I}_0(R_{k+1})} \mathcal{M}^+ v^-
\end{align*}
in $B_{3R_{k+1}}$. By \Cref{lem:decreasing}, we have
\begin{equation*}
\frac{R_{k+1}^2}{\mathcal{I}_0(R_{k+1})} \frac{\mathcal{I}_0(7R)}{(7R)^2} \leq \left( \frac{R_{k+1}}{7R} \right)^s = 4^{-(k+1)s} < 4^{-ks_0}.
\end{equation*}
Thus, we obtain
\begin{equation*}
\frac{R_{k+1}^2}{\mathcal{I}_0(R_{k+1})} \mathcal{M}^- w \leq 2\varepsilon_0 + \frac{R_{k+1}^2}{\mathcal{I}_0(R_{k+1})} \mathcal{M}^+ v^-,
\end{equation*}
by assuming $\alpha < s_0$.

For $\mathcal{M}^+v^-$, we use an inequality $v(z) \geq -2((d_\Hn(z,0)/R_k)^\alpha-1)$, $z \in \Hn \setminus B_k$, which follows from the definition of $v$ and the properties of sequences $M_k$ and $m_k$. Then, for any $x_0 \in B_{3R_{k+1}}$, we have
\begin{align*}
\mathcal{M}^+ v^- (x_0)
&\leq \Lambda \int_{\Hn \setminus B_k} v^-(z) \mathcal{K}_s(d_\Hn(x_0, z)) \, \d\mu_{\Hn}(z) \\
&\leq 2\Lambda \int_{\Hn \setminus B_k} \left( \left( \frac{d_\Hn(z,0)}{R_k} \right)^\alpha -1 \right) \mathcal{K}_s(d_\Hn(x_0, z)) \, \d\mu_{\Hn}(z).
\end{align*}
Since $d_\Hn(z,0) \leq 4d_\Hn(z, x_0)$, we obtain
\begin{equation} \label{eq:DCT}
\frac{R_{k+1}^2}{\mathcal{I}_0(R_{k+1})} \mathcal{M}^+ v^- \leq \frac{2\Lambda R_{k+1}^2}{\mathcal{I}_0(R_{k+1})} \int_{\Hn \setminus B(x_0, R_{k+1})} \left( \left( \frac{d_\Hn(z, x_0)}{R_{k+1}} \right)^\alpha -1 \right) \mathcal{K}_s(d_\Hn(x_0, z)) \, \d\mu_{\Hn}(z).
\end{equation}
Let $I$ be the right-hand side of \eqref{eq:DCT}. By the dominated convergence theorem, we know that $I$ converges to 0 as $\alpha \to 0$ for each $s$. Let $\alpha_s > 0$ be the constant such that $I \leq \varepsilon_0$ whenever $\alpha \leq \alpha_s$. Since $I$ is continuous with respect to $\alpha$ and $s$, $\alpha_s$ is chosen continuously. Thus, the quantity $\min_{s \in [s_0,1]} \alpha_s$ is positive and depends on $s_0$ (not on $s$). By choosing $\alpha=\min_{s \in [s_0,1]}\alpha_s$, we obtain
\begin{equation*}
\frac{R_{k+1}^2}{\mathcal{I}_0(R_{k+1})} \mathcal{M}^-w \leq 3\varepsilon_0
\end{equation*}
in $B(x, 7(R_{k+1}/7))$ for $x \in B_{2R_{k+1}}$. Therefore, by \Cref{lem:L-eps} and \eqref{eq:Q_1_half_w}, we have
\begin{align*}
\frac{1}{2} | Q_1 | \leq | \lbrace w > 1 \rbrace \cap Q_1 | \leq C | Q_1 | \left( w(x) + 3\varepsilon_0 \right)^\varepsilon,
\end{align*}
or equivalently, $\theta \leq w(x) + 3\varepsilon_0$ for some universal constant $\theta > 0$. By taking $\varepsilon_0 < \theta/6$, we arrive at $w \geq \theta /2$ in $B_{2R_{k+1}}$. Thus, if we set $M_{k+1} = M_k$ and $m_{k+1} = M_k - 4^{-\alpha(k+1)}$, then 
\begin{equation*}
M_{k+1} \geq u \geq m_k + \frac{M_k - m_k}{4} \theta = M_k - \left(1 - \frac{\theta}{4} \right) 4^{-\alpha k} \geq m_{k+1}
\end{equation*}
in $B_{k+1}$.

When \eqref{eq:Q_1_half} does not hold, a similar proof can be made by using $\frac{(7R)^2}{\mathcal{I}_0(7R)} \mathcal{M}^+ u \geq - \varepsilon_0$ instead of $\frac{(7R)^2}{\mathcal{I}_0(7R)} \mathcal{M}^- u \leq \varepsilon_0$.
\end{proof}

\begin{appendix}

\section{Special functions} \label{sec:special_functions}

The equation
\begin{equation*}
\rho^2 \frac{\d^2 y}{\d \rho^2} + \rho \frac{\d y}{\d \rho} - (\rho^2 + \nu^2)y =0
\end{equation*}
is called the modified Bessel's equation, and its solutions are given by
\begin{equation*}
I_\nu(\rho) = \sum_{j=0}^\infty \frac{1}{j! \Gamma(\nu+j+1)}\left( \frac{\rho}{2} \right)^{2j+\nu} \quad \text{and}\quad K_\nu(\rho) = \frac{\pi}{2} \frac{I_{-\nu}(\rho)-I_\nu(\rho)}{\sin \nu \pi}.
\end{equation*}
They are called modified Bessel functions of the first and second kind, respectively. They satisfy the recurrence relations
\begin{equation*}
K_{\nu+1}-K_{\nu-1}=\frac{2\nu}{R}K_\nu, \quad I_{\nu-1}-I_{\nu+1}=\frac{2\nu}{R} I_\nu,
\end{equation*}
and the following system of first-order differential equations:
\begin{equation} \label{eq:derivative}
\begin{cases}
I_\nu' &=I_{\nu-1}-\frac{\nu}{R}I_\nu, \\
I_\nu' &=I_{\nu+1}+\frac{\nu}{R}I_\nu,
\end{cases}
\quad\text{and}\quad
\begin{cases}
K_\nu' &=-K_{\nu-1}-\frac{\nu}{R}K_\nu, \\
K_\nu' &=-K_{\nu+1}+\frac{\nu}{R} K_\nu.
\end{cases}
\end{equation}
Moreover, the following asymptotic behavior is well known. For further properties of special functions, the reader may consult the book \cite{OLBC10}.

\begin{lemma} \label{lem:asymptotic}
The asymptotic behavior of the modified Bessel functions are given by
\begin{align*}
I_\nu(\rho) &\sim \frac{1}{\Gamma(\nu+1)}\left( \frac{\rho}{2} \right)^\nu, \quad \nu \neq -1, -2, \cdots, \\
K_\nu(\rho) &\sim \frac{1}{2} \Gamma(\nu) \left( \frac{\rho}{2} \right)^{-\nu}, \quad \mathrm{Re}\, \nu > 0,
\end{align*}
as $\rho \to 0$, and
\begin{align*}
I_\nu(\rho) &\sim \frac{e^\rho}{\sqrt{2\pi \rho}}, \\
K_\nu(\rho) &\sim \sqrt{\frac{\pi}{2\rho}} e^{-\rho},
\end{align*}
as $\rho \to \infty$.
\end{lemma}

In this paper, some special functions involving the modified Bessel functions appear. Let us first study the kernel of the fractional Laplacian on hyperbolic spaces.

\begin{lemma} \label{lem:kernel-asymp}
There exist constants $C_{1}, C_{2} > 0$, depending only on $n$, such that
\begin{equation*}
C_{1} \leq \frac{\sinh^{n-1}(R) \mathcal{K}_{n, s, 1}(R)}{s(1-s) R^{-s} I_{n/2-1}(\frac{n-1}{2} R) K_{n/2+s}(\frac{n-1}{2} R)} \leq C_{2}.
\end{equation*}
\end{lemma}

The proof of \Cref{lem:kernel-asymp} is divided into two parts: the odd and even dimensional cases. For the even dimensional case, we need the following lemma from \cite[Lemma~3.5]{KKL22b}.

\begin{lemma}\cite[Lemma~3.5]{KKL22b} \label{lem:even_dim}
  Let $a>0$ and $\nu > -\frac{n-1}{2}$. Then
  \begin{align*}
    \int^{\infty}_{R} \frac{\sinh^{-n/2+1} r}{\sqrt{\cosh r-\cosh R}} r^{-\nu}K_{n/2 + \nu} (ar) \,\d r \sim \sqrt{\frac{\pi}{2}} \frac{\Gamma(\nu+\frac{n-1}{2})}{\Gamma(\nu+\frac{n}{2})} R^{-\nu}\sinh^{-n/2 +1} (R)K_{n/2+\nu} (aR) 
  \end{align*}
  as $R \to 0^{+}$ up to dimensional constants.
\end{lemma}

\begin{proof} [Proof of \Cref{lem:kernel-asymp}]
Observe that the function $\sqrt{R} I_{n/2-1}(\frac{n-1}{2}R)$ is comparable to the function $\sinh^{\frac{n-1}{2}}(R)$ up to dimensional constants by \Cref{lem:asymptotic}. Thus, it suffices to prove that $\sinh^{\frac{n-1}{2}}(R) \mathcal{K}_{n, s, 1}(R)$ is comparable to $s(1-s) R^{-1/2-s} K_{n/2+s}(\frac{n-1}{2}R)$ up to dimensional constants.

Let us first consider the odd dimensional case $n=2m+1$. It is sufficient to prove
\begin{equation} \label{eq:claim-G}
C_{3} \leq G(R, s) := \frac{R^{1/2+s} \sinh^{m}R}{K_{m+1/2+s}(mR)} \left( \frac{-\partial_R}{\sinh R} \right)^{m} \mathscr{K}_{1/2+s, m}(R) \leq C_{4}, \quad R>0, s \in [0,1],
\end{equation}
by recalling \eqref{eq:kernel-odd} and observing $c_{n, s} \leq C(n) s(1-s)$. By \Cref{lem:asymptotic}, the modified Bessel function $K_{\nu}(\rho)$ is asymptotic to $\sqrt{\frac{\pi}{2\rho}} e^{-\rho}$ as $\rho \to \infty$ uniformly with respect to $\nu \in [1/2, n/2+1]$. Moreover, its $i$-th derivative is asymptotic to $\rho^{-1/2}e^{-\rho}$ up to constants depending only on $n$ and $i$ by \eqref{eq:derivative} in the same range of $\nu$. Therefore, $G$ is bounded from above and below near $R = \infty$ by positive constants depending only on $n$.

On the other hand, $G$ is also bounded near $R=0$ by a dimensional constant since $K_{\nu}(\rho)$ is asymptotic to $2^{\nu-1} \Gamma(\nu) \rho^{-\nu}$ as $\rho \to 0$ and $2^{\nu-1} \Gamma(\nu)$ is bounded from above and below by dimensional constants when $\nu \in [1/2, n/2+1]$. Since $G$ is continuous, we conclude \eqref{eq:claim-G}.

Let us next consider the even dimensional case $n=2m$. In this case, we consider
\begin{equation*}
H(R, s) := \int_R^\infty \frac{\sinh r}{\sqrt{\cosh r - \cosh R}} \left(\frac{-\partial_r}{\sinh r} \right)^{m} \mathscr{K}_{\frac{1+2s}{2}, \frac{n-1}{2}}(r) \,\mathrm{d}r
\end{equation*}
We first prove that $R^{1+s} e^{(n-1)R} H(R, s)$ is bounded from above and below near $R=\infty$ by positive constants depending only on $n$. As $R$ is sufficiently close to $\infty$, we have
  \begin{align*}
H(R, s)
    &\le C \int_R^\infty \frac{e^r}{\sqrt{\sinh (\frac{r-R}{2})} \sqrt{\sinh (\frac{r+R}{2})}} r^{-1-s} e^{(-n + 1/2) r} \,\mathrm{d}r  \\                                                                                                                    & \le C \frac{1}{\sqrt{\sinh R}}\int _0^\infty \frac{1}{\sqrt{\sinh \frac{t}{2}}} (t+R)^{-1-s} e^{(-n +3/2)(t+R)} \,\mathrm{d}t \\                                                                                                                   & \le C R^{-1-s} e^{-(n-1)R} \int_0^\infty \frac{1}{\sqrt{\sinh \frac{t}{2}}} dt \\
    &\leq C R^{-1-s} e^{-(n-1)R}
  \end{align*}
and
  \begin{align*}
  H(R, s)
  &= \int_R^\infty 2\sinh r \sqrt{\cosh r - \cosh R} \left(\frac{-\partial_r}{\sinh r} \right)^{\frac{n+2}{2}} \mathscr{K}_{\frac{1+2s}{2}, \frac{n-1}{2}}(r) \,\mathrm{d}r \\
& \ge C \int_R^\infty e^r\sqrt{\sinh \frac{r-R}{2}} \sqrt{\sinh \frac{r+R}{2}} r^{-1-s} e^{-(n+1/2) r} \,\mathrm{d}r \\
& \ge C \sqrt{\sinh R} \int_0^\infty \sqrt{\sinh \frac{t}{2}} (t+R)^{-1-s} e^{-(n-1/2)(t+R)} \,\mathrm{d}t \\
& \ge C R^{-1-s} e^{-(n-1)R} \int _0 ^\infty \sqrt{\sinh \frac{t}{2}} (1+t)^{-2}e^{-(n-1/2)t} \,\mathrm{d}t \\
&\geq C R^{-1-s} e^{-(n-1)R}
  \end{align*}
for some dimensional constants $C>0$.

Finally, we prove that $R^{n+2s} H(R, s)$ is bounded from above and below near $R=0$ by positive dimensional constants. By similar arguments as in the odd dimensional case, the function $H$ is comparable to
\begin{equation*}
\int_R^\infty \frac{\sinh^{-m+1} r}{\sqrt{\cosh r - \cosh R}} r^{-1/2-s}K_{m+1/2+s}\left( \frac{2m-1}{2}r \right) \,\mathrm{d}r,
\end{equation*}
and hence to
\begin{equation*}
R^{-1/2-s} \sinh^{-m+1}(R) K_{m+1/2+s}\left( \frac{2m-1}{2}R \right)
\end{equation*}
by \Cref{lem:even_dim}, up to dimensional constants. The desired result now follows from \Cref{lem:asymptotic}.
\end{proof}      

Another special function involving the modified Bessel functions used in this paper is given as follows: we define the definite integral
\begin{equation} \label{eq:A}
A^{\beta}_{\mu, \nu} = \int \rho^{\beta} I_{\mu} K_{\nu} \,\mathrm{d}\rho.
\end{equation}

\begin{lemma} \label{lem:A}
Let $k \in \mathbb{N}$ and $\beta=\mu-\nu+2k+1 \neq 0, 1, \dots, k$. Then
\begin{equation*}
A^{\beta}_{\mu, \nu} = \sum_{j=0}^{k} \frac{(-1)^{j} k!/(k-j)!}{2(\beta-k)\cdots(\beta-k+j)} \rho^{\beta+1} \left( I_{\mu+j} K_{\nu-j} + I_{\mu+j+1} K_{\nu-j-1} \right).
\end{equation*}
\end{lemma}

\begin{proof}
By using \eqref{eq:derivative} and the integration by parts, we obtain
\begin{align}
&A^{\beta}_{\mu, \nu} = \frac{1}{\beta+\mu-\nu+1} \left( \rho^{\beta+1} I_{\mu} K_{\nu} + A^{\beta+1}_{\mu, \nu-1} - A^{\beta+1}_{\mu+1, \nu} \right) \quad\text{and} \label{eq:A1} \\
&A^{\beta}_{\mu, \nu} = \frac{1}{\beta-\mu+\nu+1} \left( \rho^{\beta+1} I_{\mu} K_{\nu} - A^{\beta+1}_{\mu-1, \nu} + A^{\beta+1}_{\mu, \nu+1} \right). \label{eq:A2}
\end{align}
By plugging \eqref{eq:A2}, with $\mu, \nu$ replaced by $\mu+1, \nu-1$, into \eqref{eq:A1}, we obtain
\begin{equation} \label{eq:A-iteration}
A^{\beta}_{\mu, \nu} = \frac{1}{\beta+\mu-\nu+1} \rho^{\beta+1} (I_{\mu} K_{\nu} + I_{\mu+1} K_{\nu-1}) - \frac{\beta-\mu+\nu-1}{\beta+\mu-\nu+1} A^{\beta}_{\mu+1, \nu-1}.
\end{equation}
The desired result follows by iterating \eqref{eq:A-iteration}.
\end{proof}

We also define the indefinite integral
\begin{equation} \label{eq:A-definite}
A^{\beta}_{\mu, \nu}(R) = \int_{0}^{R} \rho^{\beta} I_{\mu} K_{\nu} \,\mathrm{d}\rho.
\end{equation}
Note that it is well defined by \Cref{lem:asymptotic}, provided that $-\mu \notin \mathbb{N}$, $\mu > 0$, and $\beta+\mu-\nu+1>0$.

\end{appendix}

\section*{Acknowledgement}

The research of Jongmyeong Kim is supported by the National Research Foundation of Korea(NRF) grant funded by the Korea government(MSIT)(2016K2A9A2A13003815). The research of Minhyun Kim is funded by the Deutsche Forschungsgemeinschaft (GRK 2235/2 2021 - 282638148) and the National Research Foundation of Korea (RS-2023-00252297). The research of Ki-Ahm Lee is supported by the Ministry of Education of the Republic of Korea and the National Research Foundation of Korea (RS-2025-00515707).


\end{document}